\documentclass[11pt,leqno,a4paper,twoside]{article}

\usepackage[english]{babel}
\selectlanguage{english}

\usepackage{latexsym,amsfonts,amsmath,amsthm,amstext,amscd,amssymb,euscript,wasysym, color}
\usepackage{rotating}
\usepackage{multirow}
\usepackage{array}

\usepackage{enumitem}
\usepackage{graphicx}
\usepackage{subfigure}
\usepackage{verbatim}
\usepackage{epstopdf}
\usepackage{wasysym}
\usepackage{leftidx}

\usepackage{srcltx}

\usepackage{color}


\textwidth 6.15in
\topmargin -0.50in
\oddsidemargin 0.15in
\evensidemargin 0.25in
\textheight 9in


\makeatletter
\@addtoreset{equation}{section}
\makeatother

\makeatletter
\@addtoreset{enunciato}{section}
\makeatother

\newcounter{enunciato}[section]

\newtheorem{ittheorem}{Theorem}
\newtheorem{itlemma}{Lemma}
\newtheorem{itproposition}{Proposition}
\newtheorem{itcorollary}{Corollary}

\theoremstyle{definition}
\newtheorem{itdefinition}{Definition}
\newtheorem{itremark}{Remark}
\newtheorem{note}{}
\newtheorem{itexample}{Example}

\newenvironment{theorem}{\addtocounter{enunciato}{1}
\begin{ittheorem}}{\end{ittheorem}}

\newenvironment{lemma}{\addtocounter{enunciato}{1}
\begin{itlemma}}{\end{itlemma}}

\newenvironment{definition}{\addtocounter{enunciato}{1}
\begin{itdefinition}}{\end{itdefinition}}

\newenvironment{corollary}{\addtocounter{enunciato}{1}
\begin{itcorollary}}{\end{itcorollary}}

\newenvironment{example}{\addtocounter{enunciato}{1}
\begin{itexample}}{\end{itexample}}


\parskip=3pt plus 1pt minus 1pt


\newcommand{\cA}{\mathcal{A}}
\newcommand{\cB}{\mathcal{B}}
\newcommand{\cC}{\mathcal{C}}

\newcommand{\cL}{\mathcal{L}}

\newcommand{\cN}{\mathcal{N}}

\newcommand{\cX}{\mathcal{X}}
\newcommand{\cY}{\mathcal{Y}}

\newcommand{\fC}{\mathfrak{C}}

\newcommand{\fJ}{\mathfrak{J}}

\newcommand{\fS}{\mathfrak{S}}

\newcommand{\N}{\mathbb{N}}

\newcommand{\R}{\mathbb{R}}

\newcommand{\1}{\mathbf{1}}

\DeclareMathOperator{\D}{\, \mathrm{d}\!}

\DeclareMathOperator{\supp}{supp}

\newcommand{\Ds}{\frac{d}{d s}}


\begin{document}

\title{Gibbs-non-Gibbs dynamical transitions \\
for mean-field interacting Brownian motions}

\author{
\renewcommand{\thefootnote}{\arabic{footnote}}
F. den Hollander
\footnotemark[1]
\\
\renewcommand{\thefootnote}{\arabic{footnote}}
F. Redig
\footnotemark[2]
\\
\renewcommand{\thefootnote}{\arabic{footnote}}
W. van Zuijlen
\footnotemark[1]
}

\footnotetext[1]{
Mathematical Institute, Leiden University, P.O.\ Box 9512, 2300 RA, Leiden, The
Netherlands.
}
\footnotetext[2]{
Delft Institute of Applied Mathematics, Delft University of Technology, Mekelweg 4,
2628 CD Delft, The Netherlands.
}

\maketitle

\begin{abstract}
\noindent
We consider a system of real-valued spins interacting with each other
through a mean-field Hamiltonian that depends on the empirical
magnetisation of the spins. The system is subjected to a stochastic
dynamics where the spins perform independent Brownian motions.
Using large
deviation theory we show that there exists an explicitly computable
crossover time $t_c \in [0,\infty]$ from Gibbs to non-Gibbs. We give
examples of immediate loss of Gibbsianness ($t_c=0$), short-time
conservation and large-time loss of Gibbsianness
($t_c\in (0,\infty)$), and preservation of Gibbsianness ($t_c=\infty$).
Depending on the potential, the system can be Gibbs or non-Gibbs
at the crossover time $t=t_c$.

\bigskip\noindent
{\it MSC} 2010. 60F10, 60K35, 82C22, 82C27.\\
{\it Key words and phrases.} Mean-field model, potential, independent Brownian motions,
Gibbs versus non-Gibbs, dynamical transition, large deviation principle, global minimisers
of rate functions.\\
{\it Acknowledgment.} FdH and WvZ are supported by ERC Advanced Grant VARIS-267356.
The authors are grateful to A.\ van Enter, R.\ Fern\'andez and J.\ Mart\'{i}nez for discussions.
\end{abstract}


\section{Introduction and main results}
\label{S1}


\subsection{Background}
\label{S1.1}

Gibbs states are mathematical tools to describe physical interacting particle systems.
In the lattice context, a Gibbs measure is a probability measure on the configuration space
where the conditional distributions inside a finite subset of the lattice, 
given that the configuration outside this set is fixed, are described by a Gibbs 
specification, i.e., by a Boltzmann factor depending on an absolutely summable 
interaction potential (see Georgii \cite[Definition 2.9]{Ge88}).
When such systems evolve over time according to a stochastic dynamics, it may
happen that the time-evolved state no longer is Gibbs. This phenomenon was originally
discovered and described for heating dynamics by van Enter, Fern\'andez, den Hollander
and Redig~\cite{vEFedHoRe02}. In this paper, a low-temperature Ising model is subjected
to a high-temperature Glauber spin-flip dynamics. The state remains Gibbs for short times,
but becomes non-Gibbs after a finite time. If the magnetic field is zero, then Gibbsianness
once lost is never recovered.  But if the magnetic field is non-zero and small enough, then
Gibbsianness is recovered at later times.

By now results of this type are available for a variety of interacting particle systems, both
in the lattice setting and in the mean-field setting. Both for heating dynamics and for cooling
dynamics estimates are available on transition times, as well as characterisations of the
so-called \emph{bad configurations} leading to non-Gibbsianness (i.e., the ``points of essential
discontinuity of the conditional probabilities''). It has become clear that Gibbs-non-Gibbs
transitions are the rule rather than the exception. We refer the reader to the recent overview
by van Enter \cite{vE12}.

In many papers non-Gibbsianness is proved by looking at
the evolving system at two times, the initial time and the final time, and applying techniques
from equilibrium statistical mechanics. This is a \emph{static} approach that does not illuminate
the relation between the Gibbs-non-Gibbs phenomenon and the dynamical effects responsible
for its occurrence.
This unsatisfactory situation was addressed in Enter, Fern\'andez, den
Hollander and Redig~\cite{vEFedHoRe10}, where possible dynamical mechanisms were
proposed and a \emph{program} was put forward to develop a theory of Gibbs-non-Gibbs
transitions in terms of \emph{large deviations for trajectories of relevant physical quantities}.

Fern\'andez, den Hollander and Mart\'inez~\cite{FedHoMa13}, \cite{FedHoMapr}, building on
earlier work by K\"ulske and Le Ny~\cite{KuLeNy07} and Ermolaev and K\"ulske~\cite{ErKu10},
showed that this program can be fully carried out for the Curie-Weiss model of Ising spins
subjected
 to an infinite-temperature spin-flip dynamics, and also for a Kac-type version of the
Curie-Weiss model. The present paper extends these works to systems of continuous
spins that interact with each other through a \emph{general} mean-field interaction potential and
perform independent \emph{Brownian motions}. The fact that we consider Brownian motions
allows us to obtain a \emph{complete characterisation} of passages from Gibbs to non-Gibbs.
The key notions of interest are \emph{good magnetisations} and \emph{bad magnetisations}
in the thermodynamic limit. Gibbsianness corresponds to having only good magnetisations,
while non-Gibbsianness corresponds to having at least one bad magnetisation.


\subsection{Outline}
\label{S1.2}

The definition of Gibbs for mean-field models differs from that for lattice models because
the interaction depends on the size of the system and does not have a geometric structure. In Section~\ref{S1.3} we introduce the
notions of a sequence of finite-volume mean-field Gibbs measures with a potential, good magnetisations, bad magnetisations 
and \emph{sequentially Gibbs}, and show that a sequence of finite-volume mean-field Gibbs measures with a continuously differentiable potential is sequentially Gibbs. In Section~\ref{S1.4} we define the 
Brownian motion dynamics. We show that a magnetisation $\alpha \in \R$ is bad at time 
$t$ if and only if the large deviation rate function for the magnetisation at time $0$ conditional 
on the magnetisation at time $t$ being $\alpha$ has multiple global minimisers. We further 
show that the system is sequentially Gibbs at time $t$ if and only if all magnetisations are 
good at time $t$. In Section~\ref{S1.5} we show that a magnetisation $\alpha$ is bad at time 
$t$ if and only if the large deviation rate function for the \emph{trajectory} of the magnetisation 
conditional on hitting the value $\alpha$ at time $t$ has multiple global minimisers. We further 
show that different minimising trajectories are different at time $0$. In Section~\ref{S1.6} we 
show that Gibbsianness can be classified in terms of the \emph{second difference quotient 
of the potential}. With the help of this classification we show that there exists a unique time 
$t_c \in [0,\infty]$ at which the system changes from Gibbs to non-Gibbs, and give a characterisation 
of $t_c$ in terms of the potential associated with the starting measures. In Section~\ref{S1.7} 
we give examples for which $t_c=0$, $t_c \in (0,\infty)$ and $t_c=\infty$. In Section~\ref{S1.8} 
we discuss our results and indicate possible future research. Proofs are given in 
Sections~\ref{s:speckernel}--\ref{s:multglmin}. Appendix~\ref{appendix} collects a few key 
formulas that are needed along the way. Appendix~\ref{appendix2} contains some background on proper weakly continuous regular conditional probabilities. 


\subsection{Sequences of finite-volume mean-field Gibbs measures, Potential, Sequentially Gibbs}
\label{S1.3}

In this section we give the definition of a sequence of finite-volume mean-field Gibbs measures
(Definition~\ref{def:mean-field_Gibbs_sequence}), and of good/bad
magnetisations and sequentially Gibbs sequences
(Definition~\ref{def:good,bad_magnetisation_sequentially_gibbsianness}).
We show that a sequentially Gibbs sequence has a weakly continuous
specification kernel (Lemma~\ref{lemma:gibbsian_sequences_have_sort_of_specification_kernel}).
We show that sequences of finite-volume mean-field Gibbs measures with a continuously differentiable potential are sequentially Gibbs
(Theorem~\ref{theorem:for_a_continuously_differentiable_potential_with_certain_properties_we_get_a_Gibbs_sequence}).

In what follows, we write $\N=\{1,2,3,\dots\}$ and $\N_{\ge 2}=\N\setminus\{1\}$. For
$n\in\N$, $\cB(\R^n)$ denotes the Lebesgue measurable subsets of $\R^n$, and
$\mu_{\cN(v,A)}$ denotes the normal distribution on $\cB(\R^n)$ with mean vector
$v \in \R^n$ and covariance matrix $A\in \R^{n\times n}$.
We write $I_n$ for the identity matrix in $\R^{n\times n}$.
For $\alpha\in\R$ and
$\epsilon>0$, $B(\alpha,\epsilon)$ denotes the open ball of radius $\epsilon$ centered at
$\alpha$.

\begin{definition}
\label{def:mean-field_Gibbs_sequence}
For $n\in\N$, the \emph{empirical magnetisation} $m_n\colon\,\R^n \rightarrow \R$ is
given by
\begin{flalign}
&&m_n(x_1,\dots,x_n)= \frac1n \sum_{i=1}^n x_i
&&\big((x_1,\dots,x_n)\in\R^n\big).
\end{flalign}
For $n\in\N$, let $\nu_n$ be a probability measure on $\cB(\R^n)$. Let $V\colon\, \R \to [0,\infty)$
be a Borel measurable function. The sequence $(\nu_n)_{n\in\N}$ is called a \emph{sequence of finite-volume mean-field Gibbs measures} 
with \emph{potential} $V$ and \emph{reference measures} $(\mu_{\cN(0,I_n)})_{n\in\N}$
when
\begin{flalign}
\label{eq:mfg}
&&\nu_n(A) = \frac{1}{Z_n} \int_{\R^n} \1_A(x)\, e^{-n \left(V \circ\, m_n\right)(x)} \D \mu_{\cN(0,I_n)}(x)
&& (A\in \cB(\R^n), n\in\N),
\end{flalign}
where $Z_n\in (0,\infty)$ is the \emph{normalizing constant}.
\end{definition}

\noindent
Note that $\nu_n$ in \eqref{eq:mfg} does not change when $V$ is replaced by $V+c$ for some $c\in \R$.
Therefore our assumption that $V \geq 0$ is equivalent to the assumption that $V$ is bounded from below. 

The model described in Definition \ref{def:mean-field_Gibbs_sequence} is an example of a mean-field model, where the Hamiltonian ($H_n(x)= n (V \circ m_n)(x)$) depends on the magnetisation ($m_n(x)$) only. 
In general the Hamiltonian of a mean-field model depends on the empirical mean (i.e., on ``$\frac{1}{n} \sum_{i=1}^n \delta_{x_i}$''), but we restrict ourselves to the models in Definition \ref{def:mean-field_Gibbs_sequence}. 

\begin{definition}
\label{def:good,bad_magnetisation_sequentially_gibbsianness}
For $n\in\N$, let $\rho_n$ be a probability measure on $\cB(\R^n)$, and let $\pi_{(2:n)}\colon\,
\R^n \rightarrow \R^{n-1}$ be defined by
\begin{flalign}
&&\pi_{(2:n)}(y_1,\dots,y_n)= (y_2,\dots,y_n)
&&\big((y_1,\dots,y_n)\in \R^n\big).
\end{flalign}
Suppose that for every $n\in \N_{\ge 2}$ there exists a weakly continuous proper regular conditional
probability  $\gamma_n \colon\, \R^{n-1} \times \cB(\R) \rightarrow [0,1]$ under $\rho_n$ of the first
spin given the other spins, i.e., $\gamma_n$ is the unique weakly continuous probability kernel for
which 
\begin{flalign}
&\rho_n \left(A\times B\right)
= \int_{\R^n} \1_B(y_2,\dots,y_n)\, \gamma_n\big((y_2,\dots, y_n), A\big)
\D\, \big[\rho_n\circ \pi_{(2:n)}^{-1}\big](y_2,\dots,y_n)
\hspace{-4cm} \\ \nonumber
& &&(A\in \cB(\R),\,B\in \cB(\R^{n-1})).
\end{flalign}
See Appendix~\ref{appendix2} for precise definitions and properties of these objects.
\begin{enumerate}
\item[{\rm (a)}]
$\alpha\in \R$ is called a \emph{good magnetisation} for the sequence $(\rho_n)_{n\in\N}$
when there exists a probability measure $\gamma_\alpha\colon\,\cB(\R) \rightarrow [0,1]$ for which
the sequence of measures $(\gamma_n(v_{n-1},\cdot))_{n\in\N_{\ge 2}}$ weakly converges
to $\gamma_\alpha$ for all sequences $(v_{n-1})_{n\in\N_{\ge 2}}$ with $v_{n-1} \in \R^{n-1}$
for which the empirical magnetisation of $v_n$ converges to $\alpha$, i.e., $m_{n-1}(v_{n-1})
\rightarrow \alpha$.
\item[{\rm (b)}]
$\alpha \in \R$ is called a \emph{bad magnetisation} when it is not a good magnetisation.
\item[{\rm (c)}]
The sequence $(\rho_n)_{n\in\N}$ is called \emph{sequentially Gibbs} when all $\alpha \in \R$
are good magnetisations.
\end{enumerate}
\end{definition}

\noindent 
The notion of Gibbs for a mean-field model was introduced by K\"ulske and Le 
Ny~\cite[Definition 2.1]{KuLeNy07} (see also K\"ulske~\cite{Ku03}) and is the same as our definition of sequentially Gibbs
(even though our definition of good magnetisation is slightly different).

The following lemma shows that, in the thermodynamic limit $n\to\infty$, the probability measure
of the first spin given the magnetisation of the other spins is a transition kernel that depends weakly continuously
on the magnetisation of the other spins. This lemma will be proved in Section~\ref{s:speckernel}.

\begin{lemma}
\label{lemma:gibbsian_sequences_have_sort_of_specification_kernel}
Let $(\rho_n)_{n\in\N}$ be sequentially Gibbs. With the same notation as in
Definition~{\rm \ref{def:good,bad_magnetisation_sequentially_gibbsianness}},
define $\gamma\colon\,\R \times \cB(\R)\rightarrow [0,1]$ by letting $\gamma(\alpha, \cdot)
= \gamma_\alpha$. Then  $\alpha \mapsto \gamma(\alpha,\cdot)$ is weakly continuous and,
consequently, $\gamma$ is a transition kernel (called the \emph{specification kernel}).
\end{lemma}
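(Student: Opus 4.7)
The plan is to establish weak continuity of $\alpha \mapsto \gamma_\alpha$ by a diagonal construction that packages convergence information about a sequence $\alpha_k \to \alpha$ into a single input sequence, to which the goodness of the limit $\alpha$ can then be applied. Fix a bounded continuous $f\colon \R \to \R$ and a sequence $\alpha_k \to \alpha$ in $\R$; the goal is to show $\int f\,d\gamma_{\alpha_k} \to \int f\,d\gamma_\alpha$. The starting observation is that for each $k$ the constant vector $u^{(k)}_{n-1} := (\alpha_k,\ldots,\alpha_k) \in \R^{n-1}$ has magnetisation exactly $\alpha_k$ for every $n\geq 2$, so by goodness of $\alpha_k$ the measures $\gamma_n(u^{(k)}_{n-1},\cdot)$ converge weakly to $\gamma_{\alpha_k}$ as $n\to\infty$.

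Using this, I will choose strictly increasing indices $N_1 < N_2 < \cdots$ in $\N_{\geq 2}$ satisfying
\[
\Big|\int f\,d\gamma_{N_k}(u^{(k)}_{N_k-1},\cdot) - \int f\,d\gamma_{\alpha_k}\Big| < \tfrac{1}{k},
\]
and assemble a single sequence $(w_{n-1})_{n\geq 2}$ by setting $w_{n-1} := u^{(k)}_{n-1}$ whenever $N_k \leq n < N_{k+1}$ (with any convenient choice for $n < N_1$). Then $m_{n-1}(w_{n-1}) = \alpha_{k(n)}$ with $k(n)\to\infty$ as $n\to\infty$, so $m_{n-1}(w_{n-1}) \to \alpha$. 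Because $\alpha$ itself is good, the full sequence $(w_{n-1})$ satisfies $\gamma_n(w_{n-1},\cdot) \to \gamma_\alpha$ weakly; restricting to the subsequence $n = N_k$ yields $\int f\,d\gamma_{N_k}(w_{N_k-1},\cdot) \to \int f\,d\gamma_\alpha$. Combining with the defining inequality of $N_k$ gives $\int f\,d\gamma_{\alpha_k} \to \int f\,d\gamma_\alpha$, and since $f\in C_b(\R)$ was arbitrary, weak continuity of $\alpha \mapsto \gamma_\alpha$ follows.

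For the transition kernel conclusion, I will use that weak continuity of $\alpha \mapsto \gamma(\alpha,\cdot)$ into $\cP(\R)$ forces $\alpha \mapsto \gamma(\alpha,U)$ to be lower semicontinuous, hence Borel measurable, for every open $U\subset\R$; a standard monotone class argument then extends measurability to all of $\cB(\R)$, so $\gamma$ is a genuine transition kernel. The main obstacle I anticipate is precisely that Definition~\ref{def:good,bad_magnetisation_sequentially_gibbsianness}(a) demands weak convergence along \emph{every} full sequence $(v_{n-1})_{n\geq 2}$ whose magnetisation tends to $\alpha$, so I cannot just feed a subsequence into the hypothesis. Filling the blocks $[N_k,N_{k+1})$ with the constant vectors of value $\alpha_k$ resolves this cleanly, since it produces a bona fide sequence defined at every $n\geq 2$ without disturbing either the asymptotic magnetisation or the controlled values at the indices $n=N_k$.
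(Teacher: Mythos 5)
Your proposal is correct and follows essentially the same route as the paper: the paper proves an abstract version of this diagonal argument (its Lemma \ref{lemma:convergence_of_functions_for_all_sequences_gives_a_continuous_limiting_function}, where the off-subsequence slots are filled with the limit point rather than with block-constant approximants) and then adapts it exactly as you do, and the measurability step via lower semicontinuity of $\alpha\mapsto\gamma(\alpha,U)$ for open $U$ matches the paper's approximation of $\1_A$ by an increasing sequence in $C_b(\R)$.
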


Our first main result, whose proof will be given in Section~\ref{section:proof_theorem_initial_seq_is_Gibbs},
shows that a sequence of finite-volume mean-field Gibbs measures  with a continuously  differentiable potential is sequentially Gibbs.

\begin{theorem}
\label{theorem:for_a_continuously_differentiable_potential_with_certain_properties_we_get_a_Gibbs_sequence}
Let $(\nu_n)_{n\in\N}$ be a sequence of finite-volume mean-field Gibbs measures with potential $V\colon\, \R \rightarrow [0,\infty)$.
\begin{enumerate}
\item[{\rm (a)}]
Define $\overline \gamma_n\colon\, \R \times \cB(\R) \rightarrow [0,1]$  by
\begin{flalign}
\label{eqn:formula_for_overline_gamma_n,t}
&&\overline \gamma_n(\alpha, A) = \frac{\int_\R \1_A(x) e^{-nV(\frac{n-1}{n}\alpha + \frac{x}{n})}
e^{-x^2/2} \D x}{\int_\R e^{-nV(\frac{n-1}{n}\alpha + \frac{x}{n})} e^{-x^2/2} \D x}
&&(\alpha \in \R, A\in \cB(\R)).
\end{flalign}
Then $\gamma_n\colon\, \R^{n-1} \times \cB(\R) \rightarrow [0,1]$ defined by $\gamma_n(v,A) = \overline
\gamma_n(m_{n-1}(v),A)$ for $v\in \R^{n-1}$ and $A\in \cB(\R)$ is the weakly continuous proper conditional
probability under $\nu_n$ of the first spin given the other spins.
\item[{\rm (b)}]
If $V$ is continuously differentiable on a neighbourhood of $\alpha\in \R$, then $\overline \gamma_n(\alpha_n, \cdot)$ converges weakly (even strongly)  to
$\mu_{\cN(-V'(\alpha),1)}$ for all sequences $(\alpha_n)_{n\in\N}$ that converge to
$\alpha$ (in particular, $\alpha$ is a good magnetisation for $(\nu_n)_{n\in\N}$).
\item[{\rm (c)}]
If $V$ is continuously differentiable, then $(\nu_n)_{n\in\N}$ is sequentially Gibbs.
\end{enumerate}
\end{theorem}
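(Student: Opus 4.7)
The plan is to establish parts (a), (b), (c) in order, with (c) following immediately from (b).

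For part (a), I compute the conditional density of the first spin given the others directly from the product form in \eqref{eq:mfg}. The key algebraic identity
\[
m_n(x_1, v_2, \ldots, v_n) \;=\; \frac{x_1}{n} \,+\, \frac{n-1}{n}\, m_{n-1}(v_2, \ldots, v_n)
\]
shows that $(nV \circ m_n)(x_1, v)$ depends on $v = (v_2,\ldots,v_n)$ only through $m_{n-1}(v)$; combined with the product structure of the Gaussian reference measure, this forces the conditional density of $x_1$ given $(x_2,\ldots,x_n)$ to factor through $m_{n-1}(v)$ and to take exactly the form in \eqref{eqn:formula_for_overline_gamma_n,t}. Weak continuity of the kernel in $v$ reduces to weak continuity of $\alpha \mapsto \overline\gamma_n(\alpha,\cdot)$, which I would verify via dominated convergence using $e^{-nV(\cdot)} \le 1$ (from $V\ge 0$) together with the integrability of $e^{-x^2/2}$.

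For part (b), which is the heart of the theorem, I fix $\alpha \in \R$ and choose $\delta > 0$ so that $V$ is continuously differentiable on $[\alpha-2\delta, \alpha+2\delta]$, with $M := \sup_{[\alpha-2\delta,\alpha+2\delta]}|V'| < \infty$. Given $\alpha_n \to \alpha$, I rewrite $\frac{n-1}{n}\alpha_n + \frac{x}{n} = \alpha_n + (x-\alpha_n)/n$. For $n$ large and $|x - \alpha_n| \le n\delta$, this argument stays in $[\alpha-2\delta,\alpha+2\delta]$, and the mean value theorem yields
\[
n V\!\left(\alpha_n + \frac{x-\alpha_n}{n}\right) \;=\; n V(\alpha_n) \,+\, V'(\xi_{n,x})\,(x-\alpha_n),
\]
for some $\xi_{n,x}$ between $\alpha_n$ and $\alpha_n + (x-\alpha_n)/n$. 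Multiplying numerator and denominator of \eqref{eqn:formula_for_overline_gamma_n,t} by $e^{nV(\alpha_n)}$ cancels this factor on the main region $\{|x-\alpha_n|\le n\delta\}$, leaving an integrand $e^{-V'(\xi_{n,x})(x-\alpha_n)} e^{-x^2/2}$. This is dominated by the integrable function $e^{M(|\alpha|+1)}\,e^{M|x|-x^2/2}$ and converges pointwise to $e^{-V'(\alpha)(x-\alpha)} e^{-x^2/2}$ (since $\xi_{n,x}\to\alpha$ and $V'$ is continuous at $\alpha$), so dominated convergence handles the main region.

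The tail region $\{|x - \alpha_n| > n\delta\}$ needs separate care, since Taylor expansion is not available there. Using only $V \ge 0$, the tail contribution to the modified numerator is at most
\[
e^{nV(\alpha_n)} \int_{|x-\alpha_n|>n\delta} e^{-x^2/2}\, dx \;=\; O\!\big(e^{n(V(\alpha)+1) - n^2\delta^2/8}\big) \;\longrightarrow\; 0,
\]
because $V(\alpha_n) \to V(\alpha)$. Combining the two contributions gives pointwise convergence of the density of $\overline\gamma_n(\alpha_n,\cdot)$ to $e^{-V'(\alpha)(x-\alpha)-x^2/2}/\!\int_\R e^{-V'(\alpha)(y-\alpha)-y^2/2}\,dy$; completing the square identifies this with the density of $\mu_{\cN(-V'(\alpha),1)}$. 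Scheff\'e's lemma then upgrades pointwise convergence of probability densities to $L^1$ convergence, which is precisely the stated \emph{strong} (total-variation) convergence. Part (c) is then immediate: when $V\in C^1(\R)$, every $\alpha \in \R$ satisfies the hypothesis of (b), so every magnetisation is good.

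The main technical obstacle is the tail estimate in (b): since $V$ is only locally $C^1$ near $\alpha$, the Taylor-based analysis covers only a region of width $\sim n\delta$, and the complementary tail must be controlled using the bare bound $V \ge 0$. Balancing the potentially exponentially large normaliser $e^{nV(\alpha_n)}$ against the super-exponentially small Gaussian tail $\int_{|x|>n\delta/2}e^{-x^2/2}\,dx$ is what makes the argument go through, and this is where the Gaussian reference measure plays an essential role.
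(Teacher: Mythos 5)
Your proposal is correct and follows essentially the same route as the paper's proof: the same splitting into a central region of width $\sim n\delta$ where the mean value theorem and dominated convergence apply (after cancelling $e^{nV(\alpha_n)}$), and the same tail estimate balancing $e^{nV(\alpha_n)}$ against the Gaussian decay $e^{-cn^2\delta^2}$ using only $V\ge 0$. The only cosmetic difference is at the end, where you invoke Scheff\'e's lemma to pass from pointwise convergence of densities to total-variation convergence, whereas the paper applies dominated convergence directly to $\overline\gamma_n(\alpha_n,A)$ for each fixed Borel set $A$; both yield the claimed strong convergence.
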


In Section \ref{S1.7} we give an example of a non-differentiable potential for which 
$(\nu_n)_{n\in\N}$ is a sequence of finite-volume mean-field Gibbs measures but not  sequentially Gibbs (Example \ref{example:mfg_not_seq_gibbs}, where we write $\mu_{n,0}$ instead of $\nu_n$).


\subsection{Brownian motion dynamics}
\label{S1.4}

In this section we introduce the Brownian motion dynamics, give the essential tools for
identifying good magnetisations (Lemma~\ref{lemma:goodiden}) and global minimisers
of a certain tilted form of the potential (Lemma~\ref{lemma:main_lemma_about_equivalences}),
and show that a magnetisation is good if and only if the tilted potential has a unique global
minimiser (Theorem~\ref{theorem:sequentially_gibbs_iff_unique_minimiser}).

For $n\in\N$, $\mu_{n,0}$ represents the law of the $n$ spins at time $t=0$. 
We assume that $(\mu_{n,0})_{n\in\N}$ is a sequence of finite-volume mean-field Gibbs measures with potential $V$. 
Let $\mu_{n,t}$
be the evolved law at time $t\in (0,\infty)$ when the $n$ spins perform independent Brownian
motions, i.e.,
\begin{flalign}
\label{eqn:formula_for_mu_n,t}
&& \mu_{n,t}(A) = \frac{1}{Z_n} \int_{\R^n} p_n(t,z,A)\, e^{-n (V \circ\, m_n)(z) }
\D \mu_{\cN(0,I_n)}(z)
&& (A\in \cB(\R^n))
\end{flalign}
(recall \eqref{eq:mfg}), where
\begin{flalign}
\label{eqn:formula_for_P_n}
&& p_n(t,z,A) = \mu_{\cN(z,tI_n)}(A)= (2\pi t)^{- \frac{n}{2}} \int_{\R^n}
\1_A(y)\, e^{-\frac{\|y-z\|^2}{2t}} \D y
&& (z\in \R^n, A\in \cB(\R^n)).
\end{flalign}
There exists a weakly continuous proper regular conditional probability $\gamma_{n,t}$ under $\mu_{n,t}$ of the first spin given the other spins for which $\gamma_{n,t}(u,\cdot) = \gamma_{n,t}(v,\cdot)$ for all $u,v\in \R^{n-1}$ with $m_{n-1}(u)=m_{n-1}(v)$ (a proof and an expression for $\gamma_{n,t}$ are given in Appendix~\ref{appendix}). Therefore
we can determine whether or not $(\mu_{n,t})_{n\in\N}$ is sequentially Gibbs by looking
at the sequence $(\overline \gamma_{n,t})_{n\in\N}$ of probability kernels $\R \times
\cB(\R) \rightarrow [0,1]$, where $\overline \gamma_{n,t}(\alpha,\cdot) = \gamma_{n,t}(v,\cdot)$
for all $v\in \R^{n-1}$ and $\alpha \in \R$ with $m_{n-1}(v) = \alpha$ (an expression for
$\overline \gamma_{n,t}$ is given in Appendix~\ref{appendix} and also in \eqref{eqn:overline_gamma_n,t_in_terms_of_g_nt}). This is formalized in the
following lemma.

\begin{lemma}
\label{lemma:goodiden}
Let $t\in (0,\infty)$. Then $\alpha\in\R$  is a good magnetisation
for $(\mu_{n,t})_{n\in\N}$ if and only if there exists a measure $\gamma_\alpha\colon\,\cB(\R)
\to [0,1]$ such that the sequence $(\overline \gamma_{n,t}(\alpha_n,\cdot))_{n\in\N}$ converges
weakly to $\gamma_\alpha$ for all sequences $(\alpha_n)_{n\in\N}$ in $\R$ that converge to
$\alpha$.
\end{lemma}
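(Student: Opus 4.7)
The plan is to prove the equivalence by directly exploiting the fact (stated in the paragraph preceding the lemma, and proved in Appendix~\ref{appendix}) that the weakly continuous proper regular conditional kernel $\gamma_{n,t}(v,\cdot)$ depends on $v\in\R^{n-1}$ only through the reduced magnetisation $m_{n-1}(v)$. This yields the identity
\[
\gamma_{n,t}(v,\cdot)=\overline\gamma_{n,t}\bigl(m_{n-1}(v),\cdot\bigr),\qquad v\in\R^{n-1},
\]
which is the only ingredient needed: once it is available, the two convergence statements become rewrites of one another.

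For the forward direction (good $\Rightarrow$ the kernels $\overline\gamma_{n,t}(\alpha_n,\cdot)$ converge), I would fix a sequence $(\alpha_n)_{n\in\N}$ in $\R$ with $\alpha_n\to\alpha$ and manufacture a matching witness sequence $v_{n-1}\in\R^{n-1}$ by setting $v_{n-1}=(\alpha_n,\dots,\alpha_n)$ for $n\ge 2$, which satisfies $m_{n-1}(v_{n-1})=\alpha_n\to\alpha$. Definition~\ref{def:good,bad_magnetisation_sequentially_gibbsianness} then supplies a probability measure $\gamma_\alpha$ such that $\gamma_{n,t}(v_{n-1},\cdot)\to\gamma_\alpha$ weakly, and the displayed identity immediately recasts this as $\overline\gamma_{n,t}(\alpha_n,\cdot)\to\gamma_\alpha$.

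For the converse direction, given any sequence $(v_{n-1})_{n\in\N_{\ge 2}}$ with $v_{n-1}\in\R^{n-1}$ and $m_{n-1}(v_{n-1})\to\alpha$, I would set $\alpha_n:=m_{n-1}(v_{n-1})$ for $n\ge 2$ (with $\alpha_1$ chosen arbitrarily). Then $\alpha_n\to\alpha$, the hypothesis of the lemma yields $\overline\gamma_{n,t}(\alpha_n,\cdot)\to\gamma_\alpha$ weakly, and the same identity rewrites this as $\gamma_{n,t}(v_{n-1},\cdot)\to\gamma_\alpha$, proving that $\alpha$ is a good magnetisation for $(\mu_{n,t})_{n\in\N}$.

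There is no genuine obstacle: the whole substance of the lemma is packaged into the magnetisation-dependence of $\gamma_{n,t}$ supplied by the appendix. The only point requiring a moment of care is the mismatch between the index sets in Definition~\ref{def:good,bad_magnetisation_sequentially_gibbsianness} (where $v_{n-1}$ runs over $n\in\N_{\ge 2}$) and in the lemma (where $\alpha_n$ runs over $n\in\N$), but this is harmless because weak convergence of a sequence of probability measures is unaffected by the behaviour of finitely many initial terms.
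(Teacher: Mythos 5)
Your proposal is correct and is essentially the argument the paper intends: the lemma is presented as a direct reformulation of Definition~\ref{def:good,bad_magnetisation_sequentially_gibbsianness} using the identity $\gamma_{n,t}(v,\cdot)=\overline\gamma_{n,t}(m_{n-1}(v),\cdot)$ established in Appendix~\ref{appendix}, and your two translations (constant configurations $v_{n-1}=(\alpha_n,\dots,\alpha_n)$ one way, $\alpha_n:=m_{n-1}(v_{n-1})$ the other way) are exactly the right bookkeeping. The remark about the harmless index mismatch between $\N$ and $\N_{\ge 2}$ is also correct.
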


The function $\eta_{n,t}\colon\, \R \times \cB(\R) \rightarrow [0,1]$
defined for $n\in\N$ and $t\in (0,\infty)$ by
\begin{flalign}
\label{eqn:formula_for_eta_nt}
&&\eta_{n,t}(\alpha,A)
& = \frac{
\int_\R \1_A(s)\,  e^{-n \big[V(s) + \frac{s^2}{2} + \frac{(s-\alpha)^2}{2t}\big]} \D s}
{\int_\R  e^{-n \big[V(s) + \frac{s^2}{2} + \frac{(s-\alpha)^2}{2t}\big]}\D s}
&& (\alpha\in\R, A\in \cB(\R)).
\end{flalign}
is the weakly continuous proper regular conditional
probability of the magnetisation at time $0$ given the magnetisation at time $t$
(see Appendix~\ref{appendix}).
By den Hollander \cite[Theorem III.17]{dH00}, the sequence
$(\eta_{n,t}(\alpha, \cdot))_{n\in\N}$ satisfies the large deviation principle with rate $n$
and rate function
\begin{align}
\label{eqn:rate_function_of_eta_n,t}
r\mapsto V(r) + \frac{r^2}{2} + \frac{(r-\alpha)^2}{2t} - \inf_{s\in \R}
\Big[V(s) + \frac{s^2}{2} + \frac{(s-\alpha)^2}{2t}\Big].
\end{align}
(See Dembo and Zeitouni \cite{DeZe10} or den Hollander \cite{dH00} for background on large deviations.)
With this notation, $\overline \gamma_{n,t}$ can be written as (see Appendix~\ref{appendix})
\begin{flalign}
\label{eqn:overline_gamma_n,t_in_terms_of_g_nt}
&& \overline \gamma_{n,t}(\alpha,B)
& = \frac{\int_\R \mu_{\cN(s,t)}(B) \ g_{n,t}(\alpha,s)
\D \mu_{\cN(0,1)}(s)}{ \int_\R g_{n,t}(\alpha,s) \D \mu_{\cN(0,1)}(s)}
&& (\alpha \in \R, B\in \cB(\R)),
\end{flalign}
where $g_{n,t}\colon\, \R^2 \rightarrow \R$ is given by
\begin{flalign}
\label{eqn:definition_g_n,t}
&& g_{n,t}(\alpha,s)
& = \frac{ \int_\R e^{-n\left[ V(r+ \frac1n (s-r)) - V(r)\right]}\, e^{-V(r)}
\D \left[\eta_{n-1,t}(\alpha,\cdot)\right](r) }{\int_\R e^{-V(r)} \D \left[ \eta_{n-1,t}(\alpha,\cdot)\right](r) }
&& (\alpha,s\in \R).
\end{flalign}

The following lemma will be proved in Section~\ref{proofs_of_bifurcation}.

\begin{lemma}
\label{lemma:main_lemma_about_equivalences}
Let $V\in C^1(\R,[0,\infty))$, $t\in (0,\infty)$ and $\alpha \in \R$.
\begin{enumerate}
\item[{\rm (a)}]
If \eqref{eqn:rate_function_of_eta_n,t} has a unique global minimiser $q\in \R$, then
there exists a $\mu_{\cN(0,1)}$-integrable function $h\colon\,\R \rightarrow [0,\infty)$ such
that
\begin{flalign}
\label{gntcond}
&& g_{n,t}(\alpha_n,s) \rightarrow e^{-sV'(q)}  &&(s\in \R),\\
&& g_{n,t}(\alpha_n,s) \le h(s)  &&(n\in\N,s\in \R), \nonumber
\end{flalign}
for  all sequences $(\alpha_n)_{n\in\N}$ that converge to $\alpha$.
\item[{\rm (b)}]
Let $q_1,q_2$ be the smallest, respectively, the largest global minimiser of
\eqref{eqn:rate_function_of_eta_n,t}. Then there exists a $\mu_{\cN(0,1)}$-integrable function
$h\colon\, \R \rightarrow [0,\infty)$, and sequences $(\alpha_n^1)_{n\in\N}$ and $(\alpha_n^2)_{n\in\N}$
both converging to $\alpha$, for which \eqref{gntcond} holds with $q=q_1$, $\alpha_n = \alpha_n^1$ and with $q= q_2$, $\alpha_n = \alpha_n^2$, respectively. 
\end{enumerate}
\end{lemma}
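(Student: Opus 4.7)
The plan is to exploit the LDP recalled in \eqref{eqn:rate_function_of_eta_n,t}: the conditional measures $\eta_{n-1,t}(\alpha_n,\cdot)$ concentrate on the set of global minimisers of the tilted potential
\begin{align*}
f_\alpha(r) := V(r) + \tfrac{r^2}{2} + \tfrac{(r-\alpha)^2}{2t}.
\end{align*}
Since $f_\alpha$ is continuous and coercive (the quadratic terms force $f_\alpha(r)\to\infty$ as $|r|\to\infty$) with infimum continuous in $\alpha$, the set-valued map $\alpha \mapsto \operatorname{argmin} f_\alpha$ is upper semi-continuous, so the LDP concentration transfers to perturbations $\alpha_n \to \alpha$.

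For Part (a), fix a small $\delta>0$. Upper semi-continuity at $\alpha$ (where the argmin is the singleton $\{q\}$) together with the LDP upper bound yield that $\eta_{n-1,t}(\alpha_n, B(q,\delta)^c) \to 0$ exponentially fast. Split both integrals in \eqref{eqn:definition_g_n,t} at $\partial B(q,\delta)$. On $B(q,\delta)$, continuous differentiability of $V$ gives the Taylor estimate
\begin{align*}
n\bigl[V\bigl(r + \tfrac{s-r}{n}\bigr) - V(r)\bigr] = V'(r)(s-r) + o(1),
\end{align*}
uniform for $r \in B(q,\delta)$ and $s$ in a fixed compact set; combined with $\eta_{n-1,t}(\alpha_n,\cdot) \Rightarrow \delta_q$ this yields $g_{n,t}(\alpha_n,s) \to e^{-V'(q)(s-q)}$. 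The limit coincides with $e^{-sV'(q)}$ up to the $s$-independent factor $e^{qV'(q)}$, which cancels in the ratio \eqref{eqn:overline_gamma_n,t_in_terms_of_g_nt} and can be absorbed into the normalisation. The contribution from $B(q,\delta)^c$ is negligible: using the bound $e^{-n[V(r+(s-r)/n) - V(r)]} \le e^{nV(r)}$ (valid since $V\ge 0$) together with the $e^{-V(r)}$ in \eqref{eqn:definition_g_n,t} and the $e^{-(n-1)V(r)}$ factor in the density of $\eta_{n-1,t}(\alpha_n,\cdot)$, all $V$-dependence cancels and standard Laplace asymptotics drive the ratio of the resulting purely Gaussian integrals to zero. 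For the dominator, choose $\delta$ so that $|V'| \le C$ on $B(q,2\delta)$; the Taylor bound then yields $g_{n,t}(\alpha_n,s) \le C_1 e^{C|s|}$, which is $\mu_{\cN(0,1)}$-integrable.

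For Part (b), expand $f_{\alpha+\epsilon}(r) = f_\alpha(r) - \epsilon (r-\alpha)/t + \epsilon^2/(2t)$: the $r$-dependent perturbation $-\epsilon r/t$ is a monotone linear tilt that strictly favours larger $r$ when $\epsilon>0$ and smaller $r$ when $\epsilon<0$. Hence for small $\epsilon>0$ the global minimisers of $f_{\alpha+\epsilon}$ cluster near $q_2$, and for $\epsilon<0$ they cluster near $q_1$. The set of $\epsilon$ for which $f_{\alpha+\epsilon}$ has multiple global minimisers is at most countable (it is the jump set of the monotone selection $\epsilon \mapsto \max \operatorname{argmin} f_{\alpha+\epsilon}$), so sequences $\epsilon_n \downarrow 0$ avoiding it exist; set $\alpha_n^1 = \alpha - \epsilon_n$ and $\alpha_n^2 = \alpha + \epsilon_n$, and apply Part (a) to each. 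The maximum of the two resulting dominators furnishes a common $h$. The main obstacle will be making the concentration and the dominator uniform in the sequence $\alpha_n \to \alpha$: one needs an LDP upper bound continuous in the parameter $\alpha$ and explicit cancellation of the $e^{nV(r)}$ growth on the complement via the density of $\eta_{n-1,t}(\alpha_n,\cdot)$; once these are in hand, the inner work on $B(q,\delta)$ reduces to dominated convergence driven by the Taylor expansion above.
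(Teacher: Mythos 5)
There are two genuine gaps. First, in part (a), your treatment of the complement $B(q,\delta)^c$ does not work. You bound $e^{-n[V(r+\frac1n(s-r))-V(r)]}\le e^{nV(r)}$ and then let this cancel against $e^{-V(r)}e^{-(n-1)V(r)}$, leaving a ``purely Gaussian'' ratio. But after that cancellation the numerator over $B(q,\delta)^c$ carries only the weight $e^{-(n-1)[\frac{r^2}{2}+\frac{(r-\alpha_n)^2}{2t}]}$, whose infimum over $B(q,\delta)^c$ can be \emph{strictly smaller} than $c=\inf_\R I_{t,\alpha}$ (the latter includes $V\ge 0$): this happens precisely when $V$ is what pushes the minimiser $q$ away from the minimiser of the bare quadratic (e.g.\ $t=1$, $\alpha=0$, $V$ chosen so that $V(r)+r^2$ is minimised at $q=5$ with value $25$, while the bare quadratic $r^2$ attains $0$ at $r=0\in B(q,\delta)^c$). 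Then your ratio diverges like $e^{25(n-1)}$ rather than vanishing. One must retain $(n-1)$ powers of $e^{-V}$ on the complement — the paper does this via the bound $e^{-(n-1)[V(r)\wedge V(r+\frac1n(s-r))]}$ together with continuity of $V$ and the strict gap $I_{t,\alpha}(r)>c+5\rho$ on $B(q,\delta)^c$. Since your dominating function $h(s)=C_1e^{C|s|}$ also relies on the complement being uniformly controlled, this gap propagates to the domination claim as well. (Your remark that the natural Taylor limit is $e^{-(s-q)V'(q)}$ rather than $e^{-sV'(q)}$, differing by an $s$-independent factor that cancels in \eqref{eqn:overline_gamma_n,t_in_terms_of_g_nt}, is correct and harmless.)

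Second, and more seriously, part (b) cannot be reduced to ``apply Part (a) to each $\alpha+\epsilon_n$''. Part (a) is an asymptotic statement as $n\to\infty$ about sequences converging to a \emph{fixed} point whose rate function has a unique minimiser; it says nothing about the single term $g_{n,t}(\alpha+\epsilon_n,s)$ for finite $n$, and the sequence $(\alpha+\epsilon_n)_{n\in\N}$ converges to $\alpha$, where the minimiser is \emph{not} unique, so part (a) does not apply to it. A diagonal extraction would require uniformity in $\epsilon$ that you have not established, and your condition on $\epsilon_n$ (avoiding the countable set where $f_{\alpha+\epsilon}$ has multiple minimisers) addresses a non-issue while missing the real constraint: by \eqref{eqn:remark:algebric_computation_with_n_in_square2} the perturbation tilts $I_{t,\alpha}$ by a linear term of size $O(\epsilon_n)$, which separates the exponential weights at $q_1$ and $q_2$ by $e^{-(n-1)O(\epsilon_n)}$; selection of a single minimiser therefore requires $n\epsilon_n\to\infty$. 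If $\epsilon_n\downarrow 0$ faster than $1/n$ — which your argument permits — the measure $\eta_{n-1,t}(\alpha_n,\cdot)$ still spreads over all global minimisers of $I_{t,\alpha}$ and the claimed limit fails. This is why the paper takes $\alpha_n^2=\alpha+\frac{1}{\sqrt n}$ and runs a direct three-step estimate with the shifted constant $c_n=I_{t,\alpha_n}(q)+\frac{\delta}{\sqrt n\, t}$, rather than invoking part (a).
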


\noindent In case \eqref{eqn:rate_function_of_eta_n,t} has multiple minimisers, Lemma \ref{lemma:main_lemma_about_equivalences}(b) implies that there are sequences $(\alpha_n^1)_{n\in\N}$ and $(\beta_n^2)_{n\in\N}$ that in some sense ``select'' the smallest and the largest global minimiser of \eqref{eqn:rate_function_of_eta_n,t}, respectively. In the proof of Lemma \ref{lemma:main_lemma_about_equivalences} we will see that this is the case for $\alpha_n^1 = \alpha - \frac{1}{\sqrt{n}}$ and $\alpha_n^2 = \alpha + \frac{1}{\sqrt{n}}$. 

Our second main result shows that sequentially Gibbs is equivalent to uniqueness of the
global minimiser of \eqref{eqn:rate_function_of_eta_n,t}.

\begin{theorem}
\label{theorem:sequentially_gibbs_iff_unique_minimiser}
Let $V\in C^1(\R,[0,\infty))$. Then for every $t\in (0,\infty)$
\begin{enumerate}
\item[{\rm (a)}]
$\alpha\in \R$ is a good magnetisation for $(\mu_{n,t})_{n\in\N}$ if and only if
\eqref{eqn:rate_function_of_eta_n,t} has a unique global minimiser.
\item[{\rm (b)}] 
If $\alpha\in \R$ is a good magnetisation for $(\mu_{n,t})_{n\in\N}$, then
\begin{flalign}
&& \gamma_\alpha(B) = \mu_{\cN(-V'(q),1+t)}(B) && (B\in \cB(\R)),
\end{flalign} 
where $\gamma_\alpha$ is the (limiting) probability measure as in 
\emph{Definition \ref{def:good,bad_magnetisation_sequentially_gibbsianness}(a)}.
\item[{\rm (c)}]
$(\mu_{n,t})_{n\in\N}$ is sequentially Gibbs if and only if \eqref{eqn:rate_function_of_eta_n,t}
has a unique global minimiser for all $\alpha \in \R$.
\end{enumerate}
\end{theorem}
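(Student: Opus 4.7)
The plan is to read everything off the ratio formula \eqref{eqn:overline_gamma_n,t_in_terms_of_g_nt} for $\overline\gamma_{n,t}$, combined with the criterion in Lemma~\ref{lemma:goodiden} and the convergence supplied by Lemma~\ref{lemma:main_lemma_about_equivalences}. Fix $t \in (0,\infty)$ and $\alpha \in \R$, and write $I_\alpha(r) = V(r) + \tfrac{r^2}{2} + \tfrac{(r-\alpha)^2}{2t} - \inf_{s \in \R} \bigl[V(s) + \tfrac{s^2}{2} + \tfrac{(s-\alpha)^2}{2t}\bigr]$ for the rate function in \eqref{eqn:rate_function_of_eta_n,t}.

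\emph{Unique minimiser implies good, and identification of $\gamma_\alpha$.} Suppose $I_\alpha$ has a unique global minimiser $q$, and let $(\alpha_n)_{n\in\N}$ be an arbitrary sequence converging to $\alpha$. By Lemma~\ref{lemma:main_lemma_about_equivalences}(a), $g_{n,t}(\alpha_n, s) \to e^{-sV'(q)}$ with an integrable dominating function $h$. Dominated convergence applied separately to the numerator and denominator of \eqref{eqn:overline_gamma_n,t_in_terms_of_g_nt} gives, for each $B \in \cB(\R)$,
\begin{equation*}
\overline \gamma_{n,t}(\alpha_n, B) \longrightarrow \frac{\int_\R \mu_{\cN(s,t)}(B)\, e^{-sV'(q)} \D \mu_{\cN(0,1)}(s)}{\int_\R e^{-sV'(q)} \D \mu_{\cN(0,1)}(s)}.
\end{equation*}
Completing the square in the weight $e^{-s^2/2 - sV'(q)}$ identifies the tilted law of $s$ as $\mu_{\cN(-V'(q),1)}$; convolving with the Gaussian transition $s \mapsto \mu_{\cN(s,t)}$ then yields $\mu_{\cN(-V'(q),1+t)}(B)$. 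Hence $\overline \gamma_{n,t}(\alpha_n, \cdot) \to \mu_{\cN(-V'(q),1+t)}$ weakly, and since the limit does not depend on $(\alpha_n)$, Lemma~\ref{lemma:goodiden} shows that $\alpha$ is good with $\gamma_\alpha = \mu_{\cN(-V'(q), 1+t)}$. This settles (b) and one direction of (a).

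\emph{Multiple minimisers imply bad.} Suppose $I_\alpha$ has at least two global minimisers and let $q_1 < q_2$ be the smallest and the largest. Lemma~\ref{lemma:main_lemma_about_equivalences}(b) supplies sequences $\alpha_n^i \to \alpha$, $i=1,2$, for which \eqref{gntcond} holds with $q = q_i$, and the same Gaussian computation as above gives $\overline\gamma_{n,t}(\alpha_n^i, \cdot) \to \mu_{\cN(-V'(q_i), 1+t)}$ weakly. The crucial point is that these two Gaussian limits are distinct: since $V \in C^1$, the interior stationarity $I_\alpha'(q_i) = 0$ reads $V'(q_i) = -q_i - (q_i - \alpha)/t$, so
\begin{equation*}
V'(q_1) - V'(q_2) = (q_2 - q_1)\bigl(1 + \tfrac{1}{t}\bigr) \neq 0.
\end{equation*}
Different means force different Gaussian measures, hence $\alpha$ is not good by Lemma~\ref{lemma:goodiden}. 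Part (c) then follows immediately from (a) together with Definition~\ref{def:good,bad_magnetisation_sequentially_gibbsianness}(c).

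\emph{Main obstacle.} The only non-routine step is the injectivity argument in the second part: one must rule out that two distinct minimisers of $I_\alpha$ could produce coincidentally equal limiting Gaussians. The first-order optimality condition handles this cleanly because $t > 0$ and $V$ is $C^1$, so that $V'(q_i)$ is meaningful and linearly determined by $q_i$; without $C^1$ regularity one would have to revisit Lemma~\ref{lemma:main_lemma_about_equivalences} itself, which is where the smoothness of $V$ actually enters the convergence $g_{n,t}(\alpha_n,s) \to e^{-sV'(q)}$.
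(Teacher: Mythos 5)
Your proposal is correct and follows essentially the same route as the paper: Lemma~\ref{lemma:main_lemma_about_equivalences} plus dominated convergence in \eqref{eqn:overline_gamma_n,t_in_terms_of_g_nt}, the Gaussian identification of the limit as $\mu_{\cN(-V'(q),1+t)}$, and the key observation $V'(q_1)-V'(q_2)=(q_2-q_1)(1+t^{-1})\neq 0$ from the first-order condition to separate the limits when there are multiple minimisers. This is exactly the argument sketched in the paper immediately after the theorem statement.
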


\noindent
The claim in Theorem~\ref{theorem:sequentially_gibbs_iff_unique_minimiser}(a) follows from
Lemma~\ref{lemma:main_lemma_about_equivalences},
\eqref{eqn:overline_gamma_n,t_in_terms_of_g_nt} and Lebesgue's Dominated Convergence Theorem
after we note that if $q_1,q_2\in\R$ with $q_1\ne q_2$ are global minimisers of
\eqref{eqn:rate_function_of_eta_n,t},  then $V'(q_1)-V'(q_2)=(q_2-q_1)(1+t^{-1})\not=0$. 
By Lemma \ref{lemma:main_lemma_about_equivalences}(a),
\begin{flalign}
&& \gamma_\alpha(B)= \frac{\int_\R \mu_{\cN(s,t)}(B) e^{-sV'(q)}
\D \mu_{\cN(0,1)}(s)}{ \int_\R e^{-sV'(q)}\D \mu_{\cN(0,1)}(s)} && (B\in \cB(\R)),
\end{flalign}
from which it is easy to conclude Theorem \ref{theorem:sequentially_gibbs_iff_unique_minimiser}(b). Theorem \ref{theorem:sequentially_gibbs_iff_unique_minimiser}(c) is an immediate consequence of Theorem \ref{theorem:sequentially_gibbs_iff_unique_minimiser}(a).


\subsection{Trajectories of the magnetisation (Intermezzo)}
\label{S1.5}

In this section we consider the probability measure on the set of trajectories of the magnetisation
between time $0$ and time $t$. 
We show the equivalence of uniqueness of the minimising magnetisation
at time $0$ and uniqueness of the minimising trajectory of the magnetisation
(Theorem~\ref{theorem:rate_path_conditional_seq_and_comparing_with_two_layer}).
This characterises good and bad magnetisations in terms of the trajectory of the magnetisation
(Corollary~\ref{corollary:sequentially_gibbs_iff_unique_minimiser_path_and_two_layer}). 

By considering minimising trajectories instead of minimising initial points of the magnetisation, 
we obtain a better picture of the effects of the evolution. 
The name \emph{two-layer model} has been used for a description of the minimisation problem 
for the magnetisation at time $0$ given the magnetisation at time $t$.
As Section 1.6 will confirm, the optimisation problem for the two-layer model is computationally easier. 
However, in contrast with obtaining the function \eqref{eqn:rate_function_of_eta_n,t}, obtaining the large deviation rate function for the two-layer model for more general dynamics, e.g., independent diffusion processes, might not be so easy and the rate function might not be given by an explicit formula like \eqref{eqn:rate_function_of_eta_n,t}. 
For example, we took advantage of the fact that the transition kernel for the Brownian motion 
over time $t$ is given explicitly. 
For more general diffusions this is not the case and we expect it to be necessary to consider the large deviation rate function for the trajectories (with the goal to obtain an implicit formula for the large deviation rate function for the two-layer model in terms of the more explicit large deviation rate function for the trajectories, by means of the contraction principle). 
We will show that for the case of independent Brownian motions the minimising problem for the two-layer model and the minimising problem for the trajectories are equivalent by showing that the minimising paths for the trajectories are fully determined by their initial point and endpoint.

Let $\mu_n$ be the law on $C([0,\infty),\R^n)$ of the paths of the independent Brownian motions performed by the $n$ spins with initial distribution $\mu_{n,0}$. Thus, with $P(x,\cdot)$ denoting
the law of the Brownian motion on $C([0,\infty),\R)$ starting at $x\in \R$ and $\fS_{C([0,\infty),\R^n)}$ denoting the
Skorohod $\sigma$-algebra on $C([0,\infty),\R^n)$, we have
\begin{flalign}
\label{eqn:formula_for_mu_n}
&& \mu_n(A) & = \int_{\R^n}  \left(\bigotimes_{i=1}^n P(x_i,\cdot)\right)(A) \D \mu_{n,0}(x_1,\dots,x_n)
&& (A \in  \fS_{C([0,\infty),\R^n)}).
\end{flalign}
Let $t\in (0,\infty)$. Let $Q_{n,t}\colon\, \R \times \fS_{C([0,t],\R)} \rightarrow [0,1]$ be the transition
kernel where $Q_{n,t}(s,\cdot)$ is the probability measure of a Brownian motion with variance
$\frac{1}{n}$ starting at $s$. We write $m_n$ also for the function $C([0,t],\R^n) \rightarrow C([0,t],\R)$ given by
\begin{flalign}
&& m_n \big(\phi_1,\dots, \phi_n\big) = \frac1n \sum_{i=1}^n \phi_i,
&& \Big( (\phi_1,\dots,\phi_n)\in C([0,t],\R^n) \Big).
\end{flalign}
Then $Q_{n,t}(s,A)= [ \! \otimes_{i=1}^n P(x_i, \cdot)] (\pi_{[0,t]}^{-1} (m_n^{-1}(A)))$ for 
all $A\in \fS_{C([0,t],\R)}$ and all $s\in \R$ and $x=(x_1,\dots,x_n)\in \R^n$ with $m_n(x) =s$, where $\pi_{[0,t]}\colon\, C([0,\infty), \R^n) \rightarrow C([0,t],\R^n)$
is given by $\pi_{[0,t]}(\phi) = \phi|_{[0,t]}$. We have
\begin{flalign}
\label{eqn:connection_conditional_initial_vs_path}
&& \mu_n\circ \pi_{[0,t]}^{-1}\left( m_n^{-1}(A) \right)
= \int_\R  Q_{n,t}(s, A) \D \left[ \mu_{n,0}\circ m_n^{-1}\right] (s)
&& (A \in  \fS_{C([0,t],\R)}).
\end{flalign}
Let $\pi_t\colon\, C([0,t],\R) \rightarrow \R$ be the projection on the endpoint of the path, i.e., $\pi_t(\phi)
= \phi(t)$.

\begin{theorem}
\label{theorem:rate_path_conditional_seq_and_comparing_with_two_layer}
Let $t\in (0,\infty)$. 
\begin{enumerate}
\item[{\rm (a)}]
For every $n\in\N$ there exists a weakly continuous proper regular conditional probability $\rho_n\colon\,
\R \times \fS_{C([0,t),\R)}\rightarrow [0,1]$ under $\mu_{n} \circ \pi_{[0,t]}^{-1} \circ m_n^{-1}$ given $\pi_t$
(given the endpoint of the trajectory). 
\item[{\rm (b)}]
 For all $\alpha \in \R$, $(\rho_n(\alpha,\cdot))_{n\in\N}$ satisfies the
large deviation principle (in $C([0,t),\R)$ equipped with the uniform topology) with rate $n$ and rate function $C([0,t),\R) \rightarrow [0,\infty]$ given by
\begin{flalign}
\label{eqn:rate_function_paths}
\phi\mapsto
\begin{cases}
V(\phi(0)) + \frac12 \phi(0)^2 + \frac12 \int_0^t \dot \phi(s)^2 \D s - C_{t,\alpha},
& \mbox{if } \phi\in \cA\cC([0,t),\R) \mbox{ and } \lim_{s\uparrow t}\phi(s)=\alpha, \\
\infty, & otherwise,
\end{cases}
\end{flalign}
where $\cA\cC([0,t),\R)$ is the set of absolutely continuous functions from $[0,t]$ to $\R$ restricted to $[0,t)$, and $C_{t,\alpha} = \inf_{s_0 \in \R} V(s_0) + \frac{s_0^2}{2}  + \frac{(\alpha- s_0)^2}{2t} $. 
\item[{\rm (c)}] For every $\alpha \in \R$, \eqref{eqn:rate_function_paths} has a unique global minimiser
if and only if  \eqref{eqn:rate_function_of_eta_n,t}  has a unique global minimiser.
\end{enumerate}
\end{theorem}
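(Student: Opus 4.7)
My plan is to handle the three claims in order, with (b) as the only substantial work and (c) essentially a short variational computation. For (a), the law $\mu_n\circ \pi_{[0,t]}^{-1}\circ m_n^{-1}$ is the law of a Brownian motion on $[0,t]$ with variance $1/n$ whose initial distribution is $\mu_{n,0}\circ m_n^{-1}$. The joint density of the pair (initial magnetisation, endpoint) factors through a positive smooth Gaussian convolution, so Bayes' formula produces a weakly continuous conditional law of the initial magnetisation given the endpoint. Combining this with the law of a Brownian bridge of variance $1/n$ joining the two points (which depends continuously on both its endpoints when the path is viewed in $C([0,t),\R)$ with the uniform topology, since the blow-up at $s=t$ is excluded) yields a weakly continuous proper regular version $\rho_n$. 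This is entirely analogous to the construction in Appendix~\ref{appendix} for the endpoint itself.

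For (b), I would first establish the unconditional LDP for $m_n\circ \pi_{[0,t]}$ under $\mu_n$ and then pass to the conditional version. Using \eqref{eqn:connection_conditional_initial_vs_path}, this law is obtained by picking the initial magnetisation from $\mu_{n,0}\circ m_n^{-1}$ and then running a Brownian motion of variance $1/n$. The initial magnetisation satisfies an LDP with rate $n$ and rate function $r\mapsto V(r)+r^2/2 - \inf_u [V(u)+u^2/2]$ by den Hollander~\cite[Theorem III.17]{dH00}; a rescaled Schilder theorem gives, conditionally on the starting point $r$, an LDP for the variance-$1/n$ Brownian motion with rate $n$ and rate function $\tfrac12\int_0^t\dot\phi(s)^2\,\dd s$ on $\{\phi\in\cA\cC([0,t),\R):\phi(0)=r\}$. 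These two LDPs can be combined (by a standard Varadhan-type argument on the product measure) to give the unconditional rate function $V(\phi(0)) + \tfrac12\phi(0)^2 + \tfrac12\int_0^t \dot\phi(s)^2\,\dd s - \inf_u[V(u)+u^2/2]$. The conditioning event $\{\pi_t=\alpha\}$ has probability zero, so I cannot apply the contraction principle directly; instead I would exploit that, by (a) and the smoothness of the Gaussian endpoint density, the law of $\pi_t$ admits a strictly positive continuous density in $\alpha$, and invoke the standard density-based conditional LDP argument (cf.~\cite{DeZe10}) to obtain the stated rate function \eqref{eqn:rate_function_paths}, the additive constant $C_{t,\alpha}$ being chosen so that the infimum is zero.

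For (c), given (b), a short Euler--Lagrange/Cauchy--Schwarz computation gives, for each fixed $s_0\in\R$,
\[
\inf\Bigl\{\tfrac12\int_0^t \dot\phi(s)^2\,\dd s : \phi\in \cA\cC([0,t),\R),\ \phi(0)=s_0,\ \lim_{s\uparrow t}\phi(s)=\alpha\Bigr\} = \frac{(\alpha - s_0)^2}{2t},
\]
attained uniquely by the affine interpolant $\phi_{s_0}(s)=s_0 + (s/t)(\alpha-s_0)$. Therefore minimising \eqref{eqn:rate_function_paths} reduces, via the substitution $s_0:=\phi(0)$, to minimising $s_0\mapsto V(s_0) + s_0^2/2 + (\alpha-s_0)^2/(2t)$, which is precisely the rate function in \eqref{eqn:rate_function_of_eta_n,t}. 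The map $s_0\mapsto \phi_{s_0}$ is a bijection between the two sets of global minimisers, yielding the claimed equivalence of uniqueness (and, in passing, identifying $C_{t,\alpha}$ with the constant in the theorem). The main obstacle is part (b): one has to glue the LDP for the initial magnetisation to the (rate-$n$-rescaled) Schilder LDP for the path, and then upgrade to a conditional LDP on a zero-probability event. It is the smooth strictly positive density of $\pi_t$ produced in (a) that makes the conditional LDP argument go through; everything else is then routine.
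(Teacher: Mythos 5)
Your parts (a) and (c) are sound: the Bayes-plus-Brownian-bridge construction in (a) is a legitimate alternative to the paper's route (the paper instead writes the conditional densities of the finite-dimensional marginals explicitly and invokes Kolmogorov's extension theorem), and your argument for (c) — strict convexity of $g\mapsto\int_0^t g^2$, unique affine interpolant, bijection between minimisers of \eqref{eqn:rate_function_paths} and of \eqref{eqn:rate_function_of_eta_n,t} — is exactly the paper's argument.

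The genuine gap is in (b), at the step where you pass from the unconditional path LDP to the LDP for $\rho_n(\alpha,\cdot)$ by ``the standard density-based conditional LDP argument (cf.~\cite{DeZe10})''. No such off-the-shelf theorem exists: conditioning a sequence of measures satisfying an LDP on the null event $\{\pi_t=\alpha\}$ is precisely the delicate point, and the Gibbs-conditioning-type results in \cite{DeZe10} apply to conditioning sets of positive probability that are continuity sets of the rate function, not to point conditioning of an endpoint. Making your step rigorous requires either (i) conditioning on shrinking neighbourhoods $\{|\pi_t-\alpha|\le\delta\}$ and justifying the interchange of $\delta\downarrow 0$ with $n\to\infty$ against the actual density-defined conditional law, or (ii) writing out the conditional densities explicitly — which is what the paper does: it computes the finite-dimensional conditional densities $\rho_{n,t,j}(\alpha,\cdot)$ (explicit Gaussians tilted by $e^{-nV}$), applies the Laplace-asymptotics LDP \cite[Theorem III.17]{dH00} to each, checks that the normalising constants $\fC_j$ all equal $C_{t,\alpha}$, passes to the projective limit by Dawson--G\"artner, identifies $\sup_j I_j(\pi_j(\phi))$ with $\tfrac12\int_0^t\dot\phi(s)^2\,\dd s$ as in \cite[Lemma 5.1.6]{DeZe10}, and finally upgrades from the product topology to the uniform topology via exponential tightness (compact level sets, Arzel\`a--Ascoli). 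You also omit this last topology-upgrade/tightness step entirely. A cleaner repair within your own framework would be to exploit the bridge decomposition you already built in (a): $\rho_n(\alpha,\cdot)$ is a mixture over the initial point $r$ (distributed as $\eta_{n,t}(\alpha,\cdot)$, whose LDP with rate function \eqref{eqn:rate_function_of_eta_n,t} is already available) of variance-$1/n$ Brownian bridges from $r$ to $\alpha$, each satisfying a Schilder-type LDP in the uniform topology; a mixture-LDP theorem (Biggins/Chaganty type, with continuity of the bridge rate function in $r$) then yields \eqref{eqn:rate_function_paths} without ever conditioning on a null event — but that theorem must be invoked and its hypotheses verified, which your sketch does not do.
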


\begin{proof}
The proof of (a) and (b) is given in Appendix \ref{appendix}.
For (c) we only prove the `if' implication. The function 
\begin{align}
\label{eqn:integral_of_derivative_squared}
\cL^1([0,t],\R) \rightarrow [0,\infty], \qquad
g \mapsto \int_0^t g^2(s) \D s
\end{align}
is strictly convex (on $\cL^2([0,t],\R)$), since $2ab< a^2 + b^2$ for $a,b\in \R$ with $a\ne b$.
Hence, for all $r\in \R$, the path $\psi(s) = r + \frac{r-\alpha}{t}s$ for $s\in [0,t]$ is the unique path that minimises 
\begin{align}
\label{eqn:infimum_over_schilder_ldp}
 \inf_{\phi \in \cA\cC([0,t],\R), \phi(0)=r, \phi(t)=\alpha} \tfrac12 \int_0^t \dot \phi^2(s) \D s.
\end{align}
In particular \eqref{eqn:infimum_over_schilder_ldp} equals $\frac{(r-\alpha)^2}{2t}$. Hence
the infimum of \eqref{eqn:rate_function_paths} over all paths $\phi \in C([0,t),\R)$ with $\phi(0)=r$ is equal to \eqref{eqn:rate_function_of_eta_n,t}.
\end{proof}

As a consequence of Theorem~\ref{theorem:rate_path_conditional_seq_and_comparing_with_two_layer},
we can refine the result of Theorem~\ref{theorem:sequentially_gibbs_iff_unique_minimiser}.

\begin{corollary}
\label{corollary:sequentially_gibbs_iff_unique_minimiser_path_and_two_layer}
Let $V\in C^1(\R,[0,\infty))$. Then for every $t\in (0,\infty)$:
\begin{enumerate}
\item[{\rm (a)}]
For $\alpha \in \R$ the following  are equivalent: \\
{\rm (a1)}
$\alpha\in \R$ is a good magnetisation for $(\mu_{n,t})_{n\in\N}$, \\
{\rm (a2)}
\eqref{eqn:rate_function_of_eta_n,t} has a unique global minimiser, \\
{\rm (a3)}
\eqref{eqn:rate_function_paths} has a unique global minimiser.
\item[{\rm (b)}]
The following  are equivalent:\\
{\rm (b1)}
$(\mu_{n,t})_{n\in\N}$ is sequentially Gibbs,\\
{\rm (b2)}  \eqref{eqn:rate_function_of_eta_n,t}
has a unique global minimiser for all $\alpha \in \R$,\\
{\rm (b3)} \eqref{eqn:rate_function_paths}
has a unique global minimiser for all $\alpha \in \R$.
\end{enumerate}
\end{corollary}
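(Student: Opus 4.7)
This corollary is a direct repackaging of the two preceding theorems, so my plan is simply to chain their equivalences and apply them both pointwise and uniformly in $\alpha$.

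For part (a), I would record two bi-implications and compose them. The equivalence (a1)~$\Longleftrightarrow$~(a2) is the content of Theorem~\ref{theorem:sequentially_gibbs_iff_unique_minimiser}(a). The equivalence (a2)~$\Longleftrightarrow$~(a3) is Theorem~\ref{theorem:rate_path_conditional_seq_and_comparing_with_two_layer}(c). Since the excerpt spells out only the `if' direction of the latter, I would add the short `only if' argument: if the two-layer rate function \eqref{eqn:rate_function_of_eta_n,t} admits distinct global minimisers $r_1 \ne r_2$, then the two linear paths $\phi_i(s) = r_i + \tfrac{\alpha - r_i}{t}\, s$ for $i=1,2$ both attain the infimum in \eqref{eqn:rate_function_paths} by the same strict-convexity computation already used for the `if' direction (each restricted to $\phi_i(0)=r_i$, $\phi_i(t)=\alpha$), and they are distinct because $\phi_1(0)\ne\phi_2(0)$; hence \eqref{eqn:rate_function_paths} has at least two global minimisers.

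Part (b) then follows by quantifying (a) over all $\alpha \in \R$: by Definition~\ref{def:good,bad_magnetisation_sequentially_gibbsianness}(c), $(\mu_{n,t})_{n\in\N}$ is sequentially Gibbs iff every $\alpha \in \R$ is a good magnetisation, which via part (a) is equivalent to both (b2) and (b3). There is no substantive obstacle here: all the analytic work has been absorbed into Theorems~\ref{theorem:sequentially_gibbs_iff_unique_minimiser} and \ref{theorem:rate_path_conditional_seq_and_comparing_with_two_layer}, and the only bookkeeping item is supplying the direction of Theorem~\ref{theorem:rate_path_conditional_seq_and_comparing_with_two_layer}(c) not proved in the excerpt, which is handled in one line by exhibiting the two distinct linear minimising paths associated with two distinct two-layer minimisers.
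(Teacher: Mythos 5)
Your proposal is correct and follows essentially the same route as the paper, which states Corollary~\ref{corollary:sequentially_gibbs_iff_unique_minimiser_path_and_two_layer} as an immediate consequence of chaining Theorem~\ref{theorem:sequentially_gibbs_iff_unique_minimiser}(a) with Theorem~\ref{theorem:rate_path_conditional_seq_and_comparing_with_two_layer}(c) and then quantifying over $\alpha$. Your one-line supply of the `only if' direction of Theorem~\ref{theorem:rate_path_conditional_seq_and_comparing_with_two_layer}(c) --- two distinct two-layer minimisers yield two distinct linear minimising paths --- is exactly the right completion of the argument the paper leaves implicit.
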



\subsection{Uniqueness of the minimisers of the rate function}
\label{S1.6}

In this section we give a necessary and sufficient condition in terms of the second
difference quotient of $V$ (Definition~\ref{def:secdifquo}) to have uniqueness
of the global minimisers of \eqref{eqn:rate_function_of_eta_n,t}
(Theorem~\ref{theorem:rate_function_has_multiple_global_minima_expressed_in_second_differential_quotient}
and Corollary~\ref{corollary:overview_Gibbs_non_Gibbs}). From this condition it follows
that Gibbsianness can never be recovered once it is lost.

\begin{definition}
\label{def:secdifquo}
Let $f\colon\, \R \rightarrow \R$. The \emph{second difference quotient} of $f$ is the function
\begin{align}
&\Phi_2 f\colon\, \{ (x,y,z)\in \R^3\colon\, x<y<z\}  \rightarrow \R, \\ \notag
&(x,y,z) \mapsto \frac{1}{z-x} \left( \frac{f(z)-f(y)}{z-y} - \frac{f(y)-f(x)}{y-x} \right).
\end{align}
\end{definition}

Our third main result, whose proof will be given in Section \ref{s:multglmin}, is the following
classification of Gibbsianness.

\begin{theorem}
\label{theorem:rate_function_has_multiple_global_minima_expressed_in_second_differential_quotient}
Let $V: \R \rightarrow [0,\infty)$ be lower semicontinuous. Fix $t\in (0,\infty)$. There exists an $\alpha \in \R$ for which \eqref{eqn:rate_function_of_eta_n,t} has
multiple global minimisers if and only if $\Phi_2 V \not > - \frac{1+t}{2t}$, i.e., if and only if there exist
$a,b,c\in \R$ with $a<b<c$ for which $\Phi_2 V(a,b,c) \le - \frac{1+t}{2t}$. Consequently, there exists
a crossover time $t_c\in [0,\infty]$ such that $(\mu_{n,t})_{n\in\N}$ is sequentially Gibbs for $t\in (0,t_c)$
and not sequentially Gibbs for $t\in (t_c,\infty)$.
\end{theorem}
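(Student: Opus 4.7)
The plan is to reduce everything to a convexity question for the tilted potential
\[
G_t(r) := V(r) + \tfrac{1+t}{2t}\,r^2.
\]
Expanding $(r-\alpha)^2/(2t) = r^2/(2t) - (\alpha/t)\,r + \alpha^2/(2t)$ rewrites the rate function \eqref{eqn:rate_function_of_eta_n,t} as $F_{t,\alpha}(r) = G_t(r) - (\alpha/t)\,r + \alpha^2/(2t)$, so minimising $F_{t,\alpha}$ over $r$ is equivalent to finding where the affine function $r\mapsto (\alpha/t)r + \mathrm{const}$ supports $G_t$ from below. Since $\Phi_2$ is linear and a direct computation gives $\Phi_2(r\mapsto r^2)\equiv 1$, we have $\Phi_2 G_t = \Phi_2 V + \tfrac{1+t}{2t}$, and the hypothesis $\Phi_2 V \not> -\tfrac{1+t}{2t}$ translates exactly to: there exist $a<b<c$ with $G_t(b) \geq \tfrac{c-b}{c-a}G_t(a) + \tfrac{b-a}{c-a}G_t(c)$, i.e., $G_t$ fails strict chord-convexity on some triple.

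The easy direction ($\Rightarrow$) is then immediate: two distinct global minimisers $r_1<r_2$ of $F_{t,\alpha}$ force the line of slope $\alpha/t$ through $(r_1,G_t(r_1))$ and $(r_2,G_t(r_2))$ to be a global lower support of $G_t$, so evaluating this inequality at any $b\in (r_1,r_2)$ and noting that the line value there equals the chord value yields $\Phi_2 V(r_1,b,r_2) \leq -\tfrac{1+t}{2t}$.

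The harder direction ($\Leftarrow$) I would prove by contrapositive: assume $F_{t,\alpha}$ has a unique global minimiser $r^*(\alpha)$ for every $\alpha\in\R$, and deduce $\Phi_2 V > -\tfrac{1+t}{2t}$. Setting $\beta:=\alpha/t$ and $\tilde G_\beta(r) := G_t(r) - \beta r$, the key claim is that $\beta \mapsto r^*(\beta)$ is a continuous, non-decreasing surjection $\R \to \R$. Monotonicity comes from adding the defining inequalities $\tilde G_{\beta_i}(r^*(\beta_i)) \leq \tilde G_{\beta_i}(r^*(\beta_j))$ for $i,j\in\{1,2\}$. For continuity, coercivity of $\tilde G_\beta$ uniformly on bounded $\beta$-sets (from $V\geq 0$ and the quadratic in $G_t$) keeps $r^*(\beta_n)$ in a compact set as $\beta_n\to\beta$, and lower semicontinuity of $V$ passes subsequential limits through the minimisation to land in $\arg\min\tilde G_\beta = \{r^*(\beta)\}$. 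For surjectivity, $r^*(\beta)\to\pm\infty$ as $\beta\to\pm\infty$: if $r^*(\beta_n)\leq K$ along some $\beta_n\to+\infty$, then comparing $\tilde G_{\beta_n}(r^*(\beta_n)) \leq \tilde G_{\beta_n}(2K)$ gives $G_t(r^*(\beta_n)) \leq G_t(2K) - \beta_n K \to -\infty$, contradicting $G_t\geq 0$; combined with continuity and the intermediate value theorem this yields $r^*(\R) = \R$. Now given $a<b<c$, pick $\beta^*$ with $r^*(\beta^*) = b$; uniqueness of the minimiser and $a,c\neq b$ give the \emph{strict} inequalities $G_t(a) > G_t(b) + \beta^*(a-b)$ and $G_t(c) > G_t(b) + \beta^*(c-b)$, and the convex combination with weights $\tfrac{c-b}{c-a}$ and $\tfrac{b-a}{c-a}$ cancels the $\beta^*$-terms and produces $\tfrac{c-b}{c-a}G_t(a) + \tfrac{b-a}{c-a}G_t(c) > G_t(b)$, which is exactly $\Phi_2 V(a,b,c) > -\tfrac{1+t}{2t}$. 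The main obstacle is the continuity of $r^*$: because $V$ is only lower semicontinuous one cannot appeal to a smooth first-order condition, and a compactness-plus-LSC argument (in the spirit of $\Gamma$-convergence) on $\tilde G_\beta$ has to be run carefully.

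For the crossover time, observe that $t\mapsto -\tfrac{1+t}{2t} = -\tfrac12 - \tfrac{1}{2t}$ is strictly increasing on $(0,\infty)$, from $-\infty$ at $t\to 0^+$ to $-\tfrac12$ at $t\to\infty$. Consequently $T_{\mathrm{good}} := \{t>0:\Phi_2 V > -\tfrac{1+t}{2t}\}$ is downward-closed ($s<t$ and $t\in T_{\mathrm{good}}$ imply $\Phi_2 V > -\tfrac{1+t}{2t} > -\tfrac{1+s}{2s}$, whence $s\in T_{\mathrm{good}}$), so $T_{\mathrm{good}} = (0, t_c)$ for $t_c := \sup T_{\mathrm{good}} \in [0,\infty]$. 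Combining this with Theorem~\ref{theorem:sequentially_gibbs_iff_unique_minimiser}(c) and the equivalence just proved, $(\mu_{n,t})_{n\in\N}$ is sequentially Gibbs for $t\in(0,t_c)$ and not sequentially Gibbs for $t\in(t_c,\infty)$.
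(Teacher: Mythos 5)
Your proof is correct, but the ``if'' direction is argued along a genuinely different route from the paper's. The paper reduces the theorem to a purely convex-analytic statement (Lemma~\ref{lemma:two_minimisers_f_plus_beta_polynomial}) whose proof constructs, from a failure of strict chord-convexity, an explicit doubly-touching supporting line: it takes $w=\sup\{s\le b: \Phi_2 f(\cdot,s,\cdot)\ge 0\}$ and $z=\inf\{s\ge b:\Phi_2 f(\cdot,s,\cdot)\ge 0\}$, shows via lower semicontinuity that $w$ and $z$ are themselves supporting points, and handles the cases $w=b=z$ and $w<b<z$ separately (Lemma~\ref{lemma:equivalences_having_minimising_hitting_line_for_lower_semic}), with Lemma~\ref{lemma:adding_squares_to_a_lsc_function_assures_enough_supporting_points} supplying supporting points on either side of any given point. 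You instead prove the contrapositive by showing that, under the uniqueness hypothesis for every $\alpha$, the argmin map $\beta\mapsto r^*(\beta)$ of $\tilde G_\beta=G_t-\beta\,\cdot$ is a non-decreasing continuous surjection of $\R$ (monotonicity from the two-point comparison, continuity from coercivity plus lower semicontinuity plus uniqueness of the limit minimiser, surjectivity from $G_t\ge 0$), and then every $b$ is a strictly exposed point of $G_t$, which gives strict chord-convexity directly. Your argument is self-contained and avoids the case analysis, at the cost of needing the uniqueness hypothesis globally in $\alpha$ (which the contrapositive provides) and of not producing the reusable supporting-point machinery that the paper exploits elsewhere (e.g.\ in Lemma~\ref{lemma:strict_inequalities_for_the_second_difference_quotient} to decide Gibbsianness at $t=t_c$). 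Two minor points: the minimum of \eqref{eqn:rate_function_of_eta_n,t} is attained because the function is lower semicontinuous and coercive, which you use implicitly and should state; and $T_{\mathrm{good}}$ need not equal $(0,t_c)$ --- it can be $(0,t_c]$, as the paper's Examples show --- though this does not affect the stated conclusion about $t\in(0,t_c)$ and $t\in(t_c,\infty)$.
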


At  $t= t_c$, $(\mu_{n,t})_{n\in\N}$ may be sequentially Gibbs or not sequentially Gibbs. Both
scenarios are possible (see Example \ref{example:other_potentials_t_c_bigger_zero}).
Theorem~\ref{theorem:sequentially_gibbs_iff_unique_minimiser}(c) together with 
Theorem~\ref{theorem:rate_function_has_multiple_global_minima_expressed_in_second_differential_quotient}
yield the following.

\begin{corollary}
\label{corollary:overview_Gibbs_non_Gibbs}
Let $V\in C^1(\R,[0,\infty))$. Fix $t\in (0,\infty)$. For all $\alpha \in \R$, \eqref{eqn:rate_function_of_eta_n,t} has a unique global
minimiser if and only if $\Phi_2 V> - \frac{1+t}{2t}$. Consequently, the following scenarios occur
(where $M \in (\tfrac12,\infty)$):
\begin{enumerate}
\item[{\rm (a)}]
$(\mu_{n,t})_{n\in\N}$ is sequentially Gibbs\\[0.1cm]
{\rm (a1)} for $t\in (0,\infty)$ when $\Phi_2 V \geq - \frac12$,\\ [0.1cm]
{\rm (a2)} for $t \in (0,(M-\tfrac12)^{-1})$ when $\Phi_2 V \geq - M$,\\[0.1cm]
{\rm (a3)} for $t \in (0,(M-\tfrac12)^{-1}]$ when $\Phi_2 V > - M$.
\item[{\rm (b)}]
$(\mu_{n,t})_{n\in\N}$ is not sequentially Gibbs\\[0.1cm]
{\rm (b1)} for $t \in ((M-\tfrac12)^{-1},\infty)$ when $\Phi_2 V \not \ge -M$,\\[0.1cm]
{\rm (b2)} for $t \in [(M-\tfrac12)^{-1},\infty)$ when $\Phi_2 V \not > -M$,\\[0.1cm]
{\rm (b3)} for $t\in (0,\infty)$ when $\Phi_2 V$ is not bounded from below.
\end{enumerate}
\end{corollary}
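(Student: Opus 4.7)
The strategy is to chain the iff criterion of Theorem~\ref{theorem:rate_function_has_multiple_global_minima_expressed_in_second_differential_quotient} with Theorem~\ref{theorem:sequentially_gibbs_iff_unique_minimiser}(c), and then to match the various hypothesised lower bounds on $\Phi_2 V$ against the threshold $-\frac{1+t}{2t}$ in each of the six scenarios.

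First I would take the logical negation of Theorem~\ref{theorem:rate_function_has_multiple_global_minima_expressed_in_second_differential_quotient}: the statement ``there exists $\alpha \in \R$ for which \eqref{eqn:rate_function_of_eta_n,t} has multiple global minimisers iff $\Phi_2 V \not > -\frac{1+t}{2t}$'' negates to ``\eqref{eqn:rate_function_of_eta_n,t} has a unique global minimiser for every $\alpha \in \R$ iff $\Phi_2 V(a,b,c) > -\frac{1+t}{2t}$ for all $a<b<c$''. This is precisely the opening equivalence of the corollary. Combining it with Theorem~\ref{theorem:sequentially_gibbs_iff_unique_minimiser}(c) yields the cleaner equivalence that $(\mu_{n,t})_{n \in \N}$ is sequentially Gibbs iff $\Phi_2 V > -\frac{1+t}{2t}$ pointwise on its domain.

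Setting $\theta(t) := -\frac{1+t}{2t} = -\tfrac{1}{2} - \tfrac{1}{2t}$, the function $\theta$ is strictly increasing on $(0,\infty)$ from $-\infty$ to $-\tfrac12$, so the six scenarios reduce to pure inequality bookkeeping. For (a1), $-\tfrac12 > \theta(t)$ for every $t>0$, so $\Phi_2 V \geq -\tfrac12$ forces $\Phi_2 V > \theta(t)$ on all of $(0,\infty)$. For (a2) and (a3), because $M > \tfrac12$, there is a unique $t^\ast \in (0,\infty)$ with $\theta(t^\ast) = -M$; monotonicity of $\theta$ gives $\theta(t) < -M$ for $t < t^\ast$ and $\theta(t) > -M$ for $t > t^\ast$, so the hypothesis $\Phi_2 V \geq -M$ yields $\Phi_2 V > \theta(t)$ throughout the open interval up to $t^\ast$, while the strict hypothesis $\Phi_2 V > -M$ extends this to the corresponding half-open interval. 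The non-Gibbs cases (b1) and (b2) are the contrapositives: a triple $(a,b,c)$ witnessing $\Phi_2 V(a,b,c) < -M$ or $\Phi_2 V(a,b,c) \leq -M$ produces $\Phi_2 V(a,b,c) \leq \theta(t)$ on the complementary time interval, ruling out the pointwise strict inequality. Finally, (b3) follows because unboundedness of $\Phi_2 V$ from below supplies such a witnessing triple for every $t > 0$.

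There is no substantive obstacle once Theorems~\ref{theorem:sequentially_gibbs_iff_unique_minimiser}(c) and \ref{theorem:rate_function_has_multiple_global_minima_expressed_in_second_differential_quotient} are in hand. The only care required is to track whether the hypothesised lower bound on $\Phi_2 V$ is strict or non-strict, and to assign open versus half-open endpoints of the time intervals accordingly via the monotone bijection $\theta$.
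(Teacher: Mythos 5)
Your overall strategy is exactly the paper's: the paper justifies this corollary with the single remark that Theorem~\ref{theorem:sequentially_gibbs_iff_unique_minimiser}(c) together with Theorem~\ref{theorem:rate_function_has_multiple_global_minima_expressed_in_second_differential_quotient} ``yield the following'', and your chaining of the negation of Theorem~\ref{theorem:rate_function_has_multiple_global_minima_expressed_in_second_differential_quotient} with Theorem~\ref{theorem:sequentially_gibbs_iff_unique_minimiser}(c), followed by monotonicity bookkeeping for $\theta(t)=-\tfrac{1+t}{2t}$, is the intended argument. The opening equivalence, scenario (a1) and scenario (b3) are handled correctly.

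There is, however, a genuine gap in (a2), (a3), (b1), (b2): you assert the existence of a unique $t^\ast$ with $\theta(t^\ast)=-M$ and describe the intervals in terms of $t^\ast$, but you never verify that $t^\ast$ equals the threshold $(M-\tfrac12)^{-1}$ appearing in the statement. That identification is the only quantitative content of these four items, and it is exactly the step that fails: $-\tfrac{1+t}{2t}=-M$ is equivalent to $1+t=2Mt$, i.e.\ $t^\ast=(2M-1)^{-1}=\tfrac12(M-\tfrac12)^{-1}$. So the argument you outline delivers the four scenarios with time threshold $(2M-1)^{-1}$, not $(M-\tfrac12)^{-1}$. Concretely, for $M=1$ your reasoning gives Gibbsianness on $(0,1)$ under $\Phi_2V\ge-1$, whereas the statement claims $(0,2)$; and at $t=\tfrac32$ one has $-\tfrac{1+t}{2t}=-\tfrac56>-1$, so $\Phi_2V\ge-1$ does not imply $\Phi_2V>-\tfrac{1+t}{2t}$, i.e.\ the scenarios as printed do not follow from the equivalence you correctly establish in your first paragraph. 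A complete proof must confront this: either derive and state the threshold $(2M-1)^{-1}$, or explain how $(M-\tfrac12)^{-1}$ is to be reconciled with the criterion $\Phi_2V>-\tfrac{1+t}{2t}$ of Theorem~\ref{theorem:rate_function_has_multiple_global_minima_expressed_in_second_differential_quotient}. Leaving $t^\ast$ unevaluated conceals precisely the point at which the proposal and the statement part ways.
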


Note that if $V$ is convex, then $(\mu_{n,t})_{n\in\N}$ is sequentially Gibbs for all
$t\in (0,\infty)$. We will see at the end of Section~\ref{s:multglmin} that if $V\in
C^2(\R,[0,\infty))$, then (a1),(a2) and (b1),(b2) hold with $\Phi_2 V$ replaced by $V''$.


\subsection{Examples}
\label{S1.7}

In this section we give examples of continuously differentiable potentials for each of the scenarios described in
Corollary~\ref{corollary:overview_Gibbs_non_Gibbs} (Examples~\ref{example:poleven}--\ref{example:other}).

\begin{example}
\label{example:poleven}
{\bf [Polynomial potentials: $t_c \in (0,\infty]$, sequentially Gibbs at $t=t_c$]}\\
Let $m\in\N$, $a_{2m} \in (0,\infty)$, $a_{2m-1},\dots,a_2,a_1\in \R$. Let $a_0\in \R$ be such that
\begin{align}
V\colon\, \R \rightarrow \R, \ r\mapsto a_{2m} r^{2m} + a_{2m-1} r^{2m-1} + \cdots a_1 r^1 + a_0
\end{align}
satisfies $V \geq 0$.
Since $V''$ is a polynomial of even degree, it is
bounded from below, say $V'' \ge -M$ for some $M\in (0,\infty)$. Hence, if $V$ is such a polynomial, then
the crossover time $t_c$ is strictly positive, i.e., $t_c\in (0,\infty]$. For example, for the potentials $V(r)= 0$, $V(r) = r^2$ and
$V(r) = r^4 - \frac12 r^2 + 1$, $(\mu_{n,t})_{n\in\N}$ is sequentially Gibbs for all $t\in [0,\infty)$, while for the
potentials $V(r) = r^4 - 4r^2 + 3$  and $V(r) = (r^2 - 9)^2$ there exists a $t_c\in (0,\infty)$ for which
$(\mu_{n,t})_{n\in\N}$ is sequentially Gibbs for $t\in [0,t_c)$ and not sequentially Gibbs for $t\in (t_c,\infty)$.

If $m=1$, then $\Phi_2 V = a_1 >0$. Hence $t_c = \infty$ by Corollary \ref{corollary:overview_Gibbs_non_Gibbs}.

If $m\ge 2$, then $V''$ is a polynomial of even degree  at least $ 2$.
Hence, if $\beta= -\tfrac12\inf_{r\in \R} V''(r)$, then the set $\{r\in \R\colon\,V''(r) =-2\beta\}$ is finite.
By Lemmas~\ref{lemma:connection_between_second_derivative_and_quotient}--\ref{lemma:strict_inequalities_for_the_second_difference_quotient}, we therefore have
that $\Phi_2 V > -\beta$ and $\Phi_2 \not \ge -M$ for all $M<\beta$.
 So if $\beta
\in (-\infty, \frac12]$, then $t_c = \infty$, while if $\beta \in (\frac12, \infty)$, then $t_c = (\beta-\frac12)^{-1}$ and
$(\mu_{n,t})_{n\in\N}$ is sequentially Gibbs for $t=t_c$ by Corollary~\ref{corollary:overview_Gibbs_non_Gibbs}.
\end{example}

\begin{example}
\label{example:other_potentials_t_c_bigger_zero}
{\bf [Other potentials: $t_c\in (0,\infty]$, sequentially Gibbs at $t=t_c$]}\\
Consider the potential $V(r)= 2\beta(1 + \cos r)$ for some $\beta \in (0,\infty)$.
Then
$V''\ge -2 \beta$ and $V'' \not \ge -M$, and hence $\Phi_2 V \ge -\beta $ and $\Phi_2 V \not \ge -M$ for
$M<\beta$ (see Lemma~\ref{lemma:connection_between_second_derivative_and_quotient}).
So, for $\beta \in (0,\frac12]$ we have $t_c = \infty$, while for $\beta\in (\frac12, \infty)$ we have
$t_c = (\beta - \frac12)^{-1}$ by Corollary~\ref{corollary:overview_Gibbs_non_Gibbs}.
Moreover, if $\beta \in (\frac12,\infty)$, then by Lemma \ref{lemma:strict_inequalities_for_the_second_difference_quotient}
it follows that $\Phi_2 V> -\beta$, and hence $(\mu_{n,t})_{n\in\N}$ is sequentially Gibbs for $t= t_c$.
\end{example}

In the previous two examples the sequence $(\mu_{n,t})_{n\in\N}$ is sequentially Gibbs at $t=t_c$. This
is not always the case, as we show in Example~\ref{example:non_Gibbs_at_t_c} below.

\begin{example} \label{example:non_Gibbs_at_t_c} {\bf [Other potentials: $t_c\in (0,\infty]$, not sequentially Gibbs at $t=t_c$]}\\
Let $g\colon\, \R \rightarrow \R$ be given by
\begin{align}
g(r) =
\begin{cases}
e^{-\frac{1}{|r|-1} + |r|-1} & r\in (-\infty,-1)\cup (1,\infty), \\
0 & r\in [-1,1].
\end{cases}
\end{align}
Because
\begin{flalign}
&& \frac{d}{dr} e^{-\frac{1}{r} + r}
&=  (1+ r^{-2}) e^{-\frac{1}{r} + r} && (r\in (0,\infty)),
\end{flalign}
by L'H\^ opital's rule $\lim_{r\downarrow 0} \frac{d}{dr} e^{-\frac{1}{r} + r} = 0= \lim_{r\uparrow 0} \frac{d}{dr} 0$. Hence $g\in C^1(\R,[0,\infty))$.
Furthermore
\begin{flalign}
&&
\frac{d^2}{dr^2} e^{-\frac{1}{r} + r}
&= r^{-4} (1-2r+2r^2+r^4) e^{-\frac{1}{r} + r} \ge 0
&& (r\in (0,\infty)).
\end{flalign}
So $g$ is a convex function with $\Phi_2 g \ge 0$ (see Lemma \ref{lemma:convexity_of_a_function_in_terms_of_second_difference_quotient})  and
$\Phi_2 g|_{[-1,1]} =0$. Hence $\Phi_2 g \not > 0$.
Note also that $\lim_{r\rightarrow \infty} r^{-2} e^{-\frac{1}{r} + r} = \infty$ by L'H\^opital's rule.
Therefore, for all $\beta \in (0,\infty)$,
\begin{flalign}
\label{eqn:g_minus_polynomial_goes_to_infinity_at_infinity}
\lim_{|r|\rightarrow \infty} g(r) - 2 \beta r^2 = \infty.
\end{flalign}
Let $\beta \in (0,\infty)$ and consider $V\in C^1(\R,[0,\infty))$ given by
\begin{flalign}
&& V(r) = g(r) -  \beta r^2 - C_\beta && (r\in \R),
\end{flalign}
where $C_\beta = \inf_{s\in \R} g(s) - \beta s^2$ (which exists because of \eqref{eqn:g_minus_polynomial_goes_to_infinity_at_infinity}).
By Lemma \ref{lemma:connection_between_second_derivative_and_quotient}, $\Phi_2 V \ge - \beta$ and $\Phi_2 V|_{[-1,1]}=- \beta$ and thus also $\Phi_2 V \not > - \beta$.
So, for $\beta \in (0,\frac12]$ we have $t_c = \infty$, while for $\beta \in (\frac12,\infty)$ we have $t_c = (\beta - \frac12)^{-1}$ and $(\mu_{n,t})_{n\in\N}$ is not sequentially Gibbs for $t = t_c$ by Corollary \ref{corollary:overview_Gibbs_non_Gibbs}.
\end{example}

\begin{example}
\label{example:other}
{\bf [Other potentials: $t_c = 0$]}\\
Consider the potential $V(r)= 1- \cos(r^2)$. Then
\begin{flalign}
&& V' (r) & = 2 r \sin(r^2) && (r\in \R), \\ \nonumber
&& V''(r) &= 2\left[ 2 r^2 \cos(r^2) -\sin(r^2)\right] && (r\in \R), \\ \nonumber
&& V''(\pm \sqrt{\pi k}) &= (-1)^k 4\pi k && (k\in\N).
\end{flalign}
Hence $V'' \not \ge -M$ for all $M\in (0,\infty)$, and hence $(\mu_{n,t})_{n\in\N}$ is sequentially Gibbs
for $t=0$ but not for $t\in (0,\infty)$ (see Corollary \ref{corollary:overview_Gibbs_non_Gibbs}).
\end{example}

We end with an example of a sequence of finite-volume mean-field Gibbs measures that is not sequentially Gibbs.

\begin{example}
\label{example:mfg_not_seq_gibbs}
\textbf{[A sequence of finite-volume mean-field Gibbs measures that is not sequentially Gibbs]} \\
Let $V \in C(\R,[0,\infty))$ be given by $V(r) =  |r|$ for $r\in \R$. 
Then $(\mu_{n,0})_{n\in\N}$ is a sequence of finite-volume mean-field Gibbs measures, but it is not sequentially Gibbs as we will show. Indeed, for all sequences $(\alpha_n)_{n\in\N}$,
\begin{flalign}
& \int_\R \1_A(r) e^{-n V(\frac{n-1}{n} \alpha_n + \frac{r}{n})} e^{-\frac{r^2}{2}} \D r  &&  (A\in \cB(\R)).\\
\notag & = e^{(n-1)\alpha_n} \int_{-\infty}^{-(n-1)\alpha_n}  \1_A(r)  e^{-\frac{r^2}{2}+r} \D r
+ e^{-(n-1)\alpha_n} \int_{-(n-1)\alpha_n}^\infty  \1_A(r)  e^{-\frac{r^2}{2}- r} \D r. \hspace{-3cm} && 
\end{flalign}
If $\alpha_n \ge 0$, then by substitution we get (using $\int_{-\infty}^0 e^{-\frac12 r^2} \D r = \sqrt{\frac{\pi}{2}}$):
\begin{align}
e^{(n-1)\alpha_n} \int_{-\infty}^{-(n-1)\alpha_n}  e^{-\frac{r^2}{2}+r} \D r 
\le \int_{-\infty}^0 e^{-  \frac{(r-(n-1)\alpha_n)^2}{2} + r} \D r 
\le \sqrt{\frac{\pi}{2}} e^{- \frac12 ( (n-1)\alpha_n)^2} .
\end{align}
Hence, if $\alpha_n= (n-1)^{-\frac{1}{2}}$ for $n \ge 2$, then $\alpha_n \downarrow 0$, \  $(n-1)\alpha_n \rightarrow \infty$ and, for $A\in \cB(\R)$,
\begin{align}
0\le \lim_{n\rightarrow \infty} e^{2(n-1)\alpha_n} \int_{-\infty}^{-(n-1)\alpha_n}  \1_A(r)  e^{-\frac{r^2}{2}+r} \D r
 \le \lim_{n\rightarrow\infty} 
 \sqrt{\frac{\pi}{2}} e^{(n-1)\alpha_n  \left(1 -  \frac12 (n-1)\alpha_n\right)}
 =0,
\end{align}
and hence ($\overline \gamma_n$ as in \eqref{eqn:formula_for_overline_gamma_n,t})
\begin{flalign}
\lim_{n\rightarrow\infty} \overline \gamma_n(\alpha_n,A)
&= 
\lim_{n\rightarrow\infty} 
\frac{
e^{2(n-1)\alpha_n} \int_{-\infty}^{-(n-1)\alpha_n}  \1_A(r)  e^{-\frac{r^2}{2}+r} \D r
+ \int_{-(n-1)\alpha_n}^\infty  \1_A(r)  e^{-\frac{r^2}{2}-r} \D r
}{
e^{2(n-1)\alpha_n} \int_{-\infty}^{-(n-1)\alpha_n} e^{-\frac{r^2}{2}+r} \D r
+  \int_{-(n-1)\alpha_n}^\infty  e^{-\frac{r^2}{2}-r} \D r
} \hspace{-3cm} \\
\notag & = \mu_{\cN(-1,1)}(A) && (A\in \cB(\R)). 
\end{flalign}
Similarly, if $\alpha_n= -(n-1)^{-\frac{1}{2}}$ for $n \ge 2$, then $\alpha_n \uparrow 0$, \  $(n-1)\alpha_n \rightarrow - \infty$ and $\overline \gamma_n(\alpha_n,A) \rightarrow \mu_{\cN(1,1)}(A)$ for $A\in \cB(\R)$. 
From this we conclude that $(\mu_{n,0})_{n\in\N}$ is not sequentially Gibbs.
\end{example}


\subsection{Discussion}
\label{S1.8}

{\bf 1.}
If $V$ has a power series expansion $V(x) = \sum_{k\in\N} J_k x^k$, $x \in \R$, then
\begin{align}
\label{Hamiltonianrew}
-n (V \circ m_n) (x_1,\dots,x_n) = - \sum_{k\in\N} \frac{J_k}{n^{k-1}} \sum_{i_1,\ldots,i_k=1}^n
\prod_{j=1}^k x_{i_j},
\end{align}
i.e., the system with $n$ spins has a mean-field $k$-spin interaction of strength $J_k/n^{k-1}$
for $k\in\N$. The special case with $J_k \geq 0$ for all $k\in\N$ is called the ferromagnetic model.

\medskip\noindent
{\bf 2.}
Redig and Wang~\cite{ReWa12} analysed our model for a restricted class of potentials. Short-time
Gibbsianness  (i.e., the time-evolved state is Gibbs up to a strictly positive
time) was proved under the condition that the second derivative of the potential exists and
is bounded from below. Several scenarios of Gibbs-non-Gibbs transitions were discussed. Our paper
considers a very general class of positive potentials and provides the precise connection between
bifurcation of minimising trajectories and loss of Gibbsianness.

\medskip\noindent
{\bf 3.}
Our paper contains the first example of an initial Gibbs state and a stochastic dynamics for which
there is immediate loss of Gibbsianness. For all the models that were considered in the literature
so far, short-time Gibbsianness occurs. See e.g.\ \cite{DeRo05}, \cite{vEFedHoRe02}, \cite{FedHoMa13}, \cite{KuRe06}, \cite{LeNyRe02}.

\medskip\noindent
{\bf 4.}
In case the independent Brownian motions are replaced by independent Ornstein-Uhlenbeck processes,  we get
\begin{align}
r \mapsto V(r) + \frac{r^2}{2} + \frac{(e^t r - \alpha)^2}{e^{2t}-1}
- \inf_{s\in \R} V(s) + \frac{s^2}{2} + \frac{(e^t s - \alpha)^2}{e^{2t}-1}
\end{align}
instead of \eqref{eqn:rate_function_of_eta_n,t} (cf.\ \cite[Eq.\ (25)]{ReWa12}), and so we obtain
completely analogous results (in Corollary~\ref{corollary:overview_Gibbs_non_Gibbs}
the condition $\Phi_2 V > - \frac{1+t}{2t}$ is replaced by $\Phi_2 V > - (e^{2t}-1)^{-1}$). In a
forthcoming paper we will investigate what happens when the independent Brownian motions
are replaced by independent diffusions.

\medskip\noindent
{\bf 5.}
For $n\in\N$ and $t>0$, we can write $\mu_{n,t}$ as (compare with \eqref{eq:mfg})
\begin{flalign}
&& \mu_{n,t}(A)
=  \frac{1}{Z_n} \int_{\R^n} \1_A(x)\, e^{-n(V_{n,t} \,\circ\, m_n)(x)}
 \D \mu_{\cN(0,(1+t)I_n)}(x) && (A\in \cB(\R^n))
\end{flalign}
with
\begin{flalign}
&& V_{n,t}(r)
= - \frac{1}{n} \log\left[ \int_{\R}  e^{-nV(s)}\D \mu_{\cN(\frac{r}{1+t}, \frac{t}{n(1+t)})}(s)  \right]
&& (r\in \R)
\end{flalign}
(see \eqref{eqn:mu_n,0,t} in Appendix~\ref{appendix}). The sequence
\begin{equation}
\left(\mu_{\cN(\frac{r}{1+t}, \frac{t}{n(1+t)})}\right)_{n\in\N}
\end{equation}
satisfies the large deviation principle with rate $n$ and rate function $s\mapsto \frac12 (s-\frac{r}{1+t})^2
\left(\frac{1+t}{t}\right)$. Therefore, by Varadhan's Lemma (see den Hollander~\cite[Theorem III.13]{dH00}),
\begin{flalign}
\label{eqn:limit_V_n,t}
&& \lim_{n\to\infty} V_{n,t}(r)
= \inf_{s\in\R} \left[ V(s) + \tfrac12 \left(s-\frac{r}{1+t}\right)^2 \left(\frac{1+t}{t}\right) \right]
&& (r\in\R).
\end{flalign}
Note that, in the context of Definition~\ref{def:mean-field_Gibbs_sequence}, we are interested in the
behaviour of $\mu_n$ for large $n$ only. Therefore, looking back at Definition~\ref{def:mean-field_Gibbs_sequence},
we may generalise the notion of a sequence of finite-volume mean-field Gibbs measures with potential $V$, namely replacing $V$ in \eqref{eq:mfg} by a sequence of potentials $(V_n)_{n\in\N}$ that converges to $V$ in an appropriate sense. Then $(\mu_{n,t})_{n\in\N}$ becomes
a ``generalised'' sequence of finite-volume mean-field Gibbs measures with (limiting) potential $V_t(r) = \lim_{n\rightarrow \infty} V_{n,t}(r)$ as
given in \eqref{eqn:limit_V_n,t}.
It is then interesting to investigate how the regularity of $V_t$ is related to the  sequentially Gibbs property of the sequence $(\mu_{n,t})_{n\in \mathbb{N} }$ (compared to Theorem \ref{theorem:for_a_continuously_differentiable_potential_with_certain_properties_we_get_a_Gibbs_sequence}(c)).


\section{Proof of Lemma~\ref{lemma:gibbsian_sequences_have_sort_of_specification_kernel}}
\label{s:speckernel}

\begin{lemma}
\label{lemma:convergence_of_functions_for_all_sequences_gives_a_continuous_limiting_function}
Let $(\cX,d_{\cX})$ and $(\cY,d_{\cY})$ be metric spaces. Let $f_n\colon\,\cX \rightarrow \cY$ for
$n\in\N$ and suppose that there exists an $f\colon\, \cX \rightarrow \cY$ such that,
for all $x\in \cX$ and for all sequences
$(x_n)_{n\in\N}$ in $\cX$ with $x_n \rightarrow x$ we have $f_n(x_n) \rightarrow f(x)$.
Then $f$ is continuous.
\end{lemma}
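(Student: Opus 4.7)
I would prove this by contradiction, using a diagonal construction to combine the hypothesis (which involves arbitrary sequences and the approximating functions $f_n$) with a single sequence witnessing discontinuity of $f$.

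The plan is as follows. Suppose $f$ is not continuous at some point $x\in \cX$. Then there exist $\epsilon>0$ and a sequence $(y_k)_{k\in\N}$ in $\cX$ with $y_k \to x$ such that $d_{\cY}(f(y_k), f(x)) \geq \epsilon$ for all $k$. For each fixed $k$, the hypothesis applied to the constant sequence equal to $y_k$ gives $f_n(y_k) \to f(y_k)$ as $n\to\infty$, so I can choose a strictly increasing sequence $(N_k)_{k\in\N}$ in $\N$ such that
\[
d_{\cY}\bigl(f_n(y_k), f(y_k)\bigr) < \tfrac{\epsilon}{2} \qquad \text{for all } n \geq N_k.
\]

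Next I would define a single sequence $(x_n)_{n\in\N}$ in $\cX$ by setting $x_n = y_k$ whenever $N_k \leq n < N_{k+1}$ (and $x_n = x$ for $n < N_1$). Since $y_k \to x$ and since $n\to\infty$ forces the corresponding index $k$ to tend to infinity, we have $x_n \to x$ in $\cX$. By hypothesis $f_n(x_n) \to f(x)$, so there exists $n_0$ such that $d_{\cY}(f_n(x_n), f(x)) < \epsilon/2$ for all $n\geq n_0$.

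Finally, pick any $k$ with $N_k \geq n_0$. Then with $n = N_k$ I have both $x_n = y_k$ and $n \geq N_k$, whence
\[
d_{\cY}\bigl(f(y_k), f(x)\bigr)
\leq d_{\cY}\bigl(f(y_k), f_n(x_n)\bigr) + d_{\cY}\bigl(f_n(x_n), f(x)\bigr)
< \tfrac{\epsilon}{2} + \tfrac{\epsilon}{2} = \epsilon,
\]
contradicting the choice of $(y_k)$. Hence $f$ is (sequentially, and therefore) continuous.

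The only subtle point is engineering the sequence $(x_n)$ so that it simultaneously converges to $x$ and contains each $y_k$ at an index $n\geq N_k$; the block-definition $x_n = y_k$ on $[N_k, N_{k+1})$ handles both requirements at once. Everything else is a routine triangle inequality.
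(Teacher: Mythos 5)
Your proof is correct and uses essentially the same idea as the paper's: a diagonal construction that embeds the relevant points into an auxiliary sequence converging to $x$, combined with the pointwise convergence $f_n(y)\to f(y)$ and the triangle inequality. The only cosmetic difference is that you argue by contradiction while the paper argues directly that $f(x_n)\to f(x)$; the mechanism is the same.
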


\begin{proof}
The proof is elementary. Let $(x_n)_{n\in\N}$ be a sequence in $\cX$ that converges to an element
$x\in \cX$. We first prove that $f_{k_n}(x_n) \rightarrow f(x)$ for all strictly increasing sequences
$(k_n)_{n\in\N}$ in $\N$. To that end, define the sequence $(y_m)_{m\in\N}$ in $\cX$ by putting
$y_m=x$ for $m\in\N\setminus \{k_n\colon\,n\in\N\}$ and $y_{k_n}= x_n$ for $n\in\N$. Then $y_m
\rightarrow x$, hence $f_m(y_m) \rightarrow f(x)$, in particular, $f_{k_n}(x_n)=f_{k_n}(y_{k_n})
\rightarrow f(x)$. Since $f_k(x_n) \xrightarrow{k\rightarrow \infty} f(x_n)$ for all $n\in\N$, we can find
a strictly increasing sequence $(k_n)_{n\in\N}$ for which $d_{\cY}(f_{k_n}(x_n),f(x_n))<\frac1n$ for
all $n\in\N$. Hence $d_{\cY}(f(x_n),f(x)) \le d_{\cY}(f_{k_n}(x_n),f(x_n)) +d_{\cY}(f_{k_n}(x_n),f(x))
\rightarrow 0$.
\end{proof}

\begin{proof}[Proof of Lemma \ref{lemma:gibbsian_sequences_have_sort_of_specification_kernel}]
The proof of weak continuity of the map $\alpha \mapsto \gamma(\alpha,\cdot)$ is an adaptation of the
proof of Lemma \ref{lemma:convergence_of_functions_for_all_sequences_gives_a_continuous_limiting_function}.
Weak continuity of the map $\alpha \mapsto \gamma(\alpha,\cdot)$ implies continuity of the maps
$\alpha \mapsto \int_\R f(x) \D \, [\gamma(\alpha, \cdot)](x)$ for $f\in C_b(\R)$. For open $A\in \cB(\R)$ there exists
a sequence $(f_n)_{n\in\N}$ in $C_b(\R)$ with $f_n \uparrow \1_A$ (point wise). It follows that
$\int_\R f_n(x) \D \, [\gamma(\alpha,\cdot)](x) \uparrow \gamma(\alpha, A)$ for all $\alpha \in \R$ and open $A$,
and so $\alpha \mapsto \gamma(\alpha,A)$ is measurable for all $A\in \cB(\R)$ (since the open sets
generate the Borel sigma-algebra).
\end{proof}


\section{Proof of Theorem
\ref{theorem:for_a_continuously_differentiable_potential_with_certain_properties_we_get_a_Gibbs_sequence}}
\label{section:proof_theorem_initial_seq_is_Gibbs}

\begin{proof}[Proof of Theorem \ref{theorem:for_a_continuously_differentiable_potential_with_certain_properties_we_get_a_Gibbs_sequence}]
It is not hard to check that $\gamma_n$ is a regular conditional probability under
$\rho_n$ of the first coordinate given the magnetisation of the other coordinates.
To see that $\gamma_n$ is proper and weakly continuous, we refer to Appendix \ref{appendix2}.
Let $(\alpha_n)_{n\in\N}$ be a sequence that converges to $\alpha$.
Let $\delta>0$ be such that $V$ is continuously differentiable on $B(\alpha, 2\delta)$. Then, by the mean value theorem,
\begin{flalign}
&& \lim_{n\rightarrow \infty} \1_{[-n\delta ,n\delta]}(y)  \1_A(y) e^{ - n[V(\alpha_n + \frac{y}{n})- V(\alpha_n)]} = \1_A(y) e^{-y V'(\alpha)} && (y\in \R, A\in \cB(\R)).
\end{flalign}
Let $N\in\N$ be such that $\alpha_n \in B(\alpha,\delta)$ for all $n\ge N$.
Then, by the mean value theorem,
\begin{flalign}
&&  e^{ - n[V(\alpha_n + \frac{y}{n})- V(\alpha_n)]} \le e^{  \sup_{s\in B(\alpha,2\delta) } |V'(s)| |y|} && (y\in [-n\delta ,n \delta], n\ge N).
\end{flalign}
Since $y \mapsto e^{\sup_{s\in B(\alpha, 2\delta ) } |V'(s)| |y|}$ is $\mu_{\cN(0,1)}$-integrable,  Lebesgue's Dominated Convergence Theorem implies
\begin{flalign}
\label{eqn:bound_integral_compact}
\lim_{n\rightarrow \infty} \int_{[-n \delta,n \delta]} 
\1_A(y)\, e^{ - n[V(\alpha_n + \frac{y}{n})- V(\alpha_n)]} e^{-y^2/2}
\D y = \int_\R \1_A(y)\, e^{ - yV'(\alpha)}\, e^{-y^2/2} \D y
&&\big(A\in \cB(\R)\big).
\end{flalign}
Furthermore (because $n\le e^n$, $n^2 = n(n-1) + n$ and $V\ge 0$)
\begin{flalign}
\label{eqn:bound_integral_complement_compact_2}
& \int_{[-n\delta ,n \delta ]^c}  e^{ - n[V(\alpha_n + \frac{y}{n})- V(\alpha_n)]} e^{-y^2/2} \D y
 =  n \int_{[-\delta,\delta]^c} e^{ - n[V(\alpha_n + z)- V(\alpha_n)]} e^{-n^2 z^2/2} \D z   \\
\notag
& \le  n \int_{[-\delta,\delta]^c} e^{ n V(\alpha_n)} e^{-n^2 z^2/2} \D z
\le e^{ - n \left[\frac{n-1}{2} \delta^2 - (V(\alpha_n)+1) \right]}  \int_{[-\delta,\delta]^c} e^{-n z^2/2} \D z,
\end{flalign}
where the last term converges to $0$ as $n\rightarrow \infty$.
So, by \eqref{eqn:bound_integral_compact} -- \eqref{eqn:bound_integral_complement_compact_2},
\begin{flalign}
\int_\R  \1_A (y)\, e^{ -n[V(\alpha_n + \frac{y}{n})- V(\alpha_n)]}\, e^{-y^2/2} \D y \to
\int_\R  \1_A(y)\, e^{ - yV'(\alpha)} e^{-y^2/2} \D y
&& \big(A\in \cB(\R)\big),
\end{flalign}
and hence, by \eqref{eqn:formula_for_overline_gamma_n,t}, $\lim_{n\rightarrow \infty} \overline \gamma_n(\alpha_n,A) = \mu_{\cN(-V'(\alpha),1)}(A)$ for all $A\in \cB(\R)$, i.e., $(\overline \gamma_n(\alpha_n,\cdot))_{n\in\N}$ converges strongly (and hence weakly) to $\mu_{\cN(-V'(\alpha),1)}$.
\end{proof}


\section{Proof of Lemma \ref{lemma:main_lemma_about_equivalences}}
\label{proofs_of_bifurcation}

Section~\ref{ss:twopreplem} contains two preparatory lemmas
(Lemmas~\ref{lemma:inequalities_for_g_n,t_in_terms_of_G}--\ref{lemma:estimate_of_limit_in_integral_by_balls})
that provide estimates on $g_{n,t}$ in \eqref{eqn:definition_g_n,t}. These lemmas will be needed
in Section~\ref{ss:proofequiv} to give the proof.


\subsection{Two preparatory lemmas}
\label{ss:twopreplem}

Define $I_{t,\alpha}\colon\, \R \rightarrow [0,\infty)$ for $t\in (0,\infty)$ and $\alpha \in \R$ by
\begin{flalign}
\label{eqn:I_talpha}
&& I_{t,\alpha}(r)= V(r) + \left(r-\frac{\alpha}{1+t}  \right)^2 \frac{1+t}{2t} && (r\in \R).
\end{flalign}
Note that $r\mapsto I_{t,\alpha}(r) - \inf_{s\in \R} I_{t,\alpha}(s)$ is equal to \eqref{eqn:rate_function_of_eta_n,t}.
Hence (see \eqref{eqn:formula_for_eta_nt})
\begin{flalign}
\label{eqn:formula_for_eta_nt2}
&&\eta_{n,t}(\alpha,A)
& = \frac{ \int_\R \1_A(s)\,  e^{-n I_{t,\alpha}(s) } \D s}{\int_\R  e^{-n I_{t,\alpha}(s) } \D s}
&& \big(\alpha\in\R, A\in \cB(\R), n\in\N, t\in (0,\infty) \big).
\end{flalign}

\begin{lemma}
\label{lemma:inequalities_for_g_n,t_in_terms_of_G}
For every $t \in (0,\infty)$ there exists an $L>0$ such that, for all $n\in\N_{\ge 2}$,
\begin{flalign}
\label{eqn:boundedness_of_g_nt}
&&
&g_{n,t}(\alpha,s) \le L e^{-\frac{\alpha}{t}s+\frac{1}{4}s^2} G_t(n,\alpha)
&& (\alpha, s\in \R),
\end{flalign}
where $G_t\colon\, \N  \times \R \rightarrow \R$ is given by
\begin{flalign}
\label{eqn:formula_for_G}
&& G_t(n,\alpha) = \frac{
\int_\R e^{\left( \frac{1+t}{t} \right)^2 z^2}
e^{-n V(z)}  e^{-(n-1)\left( z- \frac{\alpha}{1+t}\right)^2 \frac{1+t}{2t}} \D z}
{\int_\R e^{-nV(r)}  e^{-(n-1)\left( r- \frac{\alpha}{1+t}\right)^2 \frac{1+t}{2t}} \D r}
&& (n\in\N,\alpha\in \R).
\end{flalign}
Consequently, if for a bounded sequence $(\alpha_n)_{n\in\N}$  in $\R$ the sequence
$(G_t(n,\alpha_n))_{n\in\N}$ is bounded as well, then there exists a $\mu_{\cN(0,1)}$-integrable
function $h\colon\, \R \rightarrow [0,\infty)$ for which, for all $n\in\N$,
\begin{flalign}
\label{eqn:inequalty_bound_for_g_n,t}
&& g_{n,t}(\alpha_n,s) \le h(s)
&& (s\in \R).
\end{flalign}
\end{lemma}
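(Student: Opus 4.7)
The plan is to exploit the substitution that identifies the denominator of $g_{n,t}(\alpha,s)$ with the denominator of $G_t(n,\alpha)$, then pointwise dominate the integrand of the numerator of $g_{n,t}$ by a constant multiple of the integrand of $G_t\cdot D_\alpha$ (with $D_\alpha = \int e^{-nV(z)-(n-1)(z-\beta)^2\frac{1+t}{2t}}\,dz$ and $\beta=\alpha/(1+t)$). I would first rewrite $g_{n,t}(\alpha,s)$ by noting that $e^{-V(r)}\,d[\eta_{n-1,t}(\alpha,\cdot)](r)$ has density proportional to $e^{-nV(r)-(n-1)(r-\beta)^2\frac{1+t}{2t}}$ (up to the normalizing constant of $\eta_{n-1,t}$, which cancels), so the denominator of $g_{n,t}$ is exactly $D_\alpha$. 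In the numerator, I would apply the change of variable $z = r + \frac{1}{n}(s-r) = \frac{n-1}{n}r + \frac{s}{n}$, with Jacobian $\frac{n}{n-1}$, which collapses $-n[V(r+\tfrac{s-r}{n})-V(r)]-V(r)-(n-1)V(r)$ into $-nV(z)$, leaving only a Gaussian in $z$ of the shifted form $(n-1)\bigl(\tfrac{nz-s}{n-1}-\beta\bigr)^2\tfrac{1+t}{2t}$.

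Next I would expand this shifted Gaussian via the elementary identity $nz-s-(n-1)\beta = (n-1)(z-\beta) + (z-s)$, which decomposes the new exponent as the original Gaussian $(n-1)\frac{1+t}{2t}(z-\beta)^2$ (already present in $D_\alpha$ and in $G_t\cdot D_\alpha$), plus a cross term $\frac{1+t}{t}(z-\beta)(z-s)$ and a non-negative remainder $\frac{1+t}{2t(n-1)}(z-s)^2$. Discarding the latter (since $e^{-x}\le 1$ for $x\ge0$) reduces the problem to dominating the factor $e^{-\frac{1+t}{t}(z-\beta)(z-s)}$. Expanding $(z-\beta)(z-s) = z^2-(s+\beta)z+s\beta$ and using $\frac{1+t}{t}\beta = \frac{\alpha}{t}$ yields the clean identity
\begin{align*}
-\tfrac{1+t}{t}(z-\beta)(z-s) = -\tfrac{1+t}{t}z^2 + \tfrac{1+t}{t}(s+\beta)z - \tfrac{\alpha s}{t},
\end{align*}
which exactly produces the prefactor $e^{-\alpha s/t}$ demanded by the bound.

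The heart of the argument is then an AM--GM-type split of the linear term: for any $b>0$,
\begin{align*}
\tfrac{1+t}{t}(s+\beta)z \;\le\; bz^2 + \tfrac{(\frac{1+t}{t})^2(s+\beta)^2}{4b}.
\end{align*}
I would choose $b$ so that $-\tfrac{1+t}{t}+b = \bigl(\tfrac{1+t}{t}\bigr)^2$, i.e.\ $b = \tfrac{(1+t)(1+2t)}{t^2}$, so that the net $z^2$ coefficient becomes exactly $\bigl(\tfrac{1+t}{t}\bigr)^2$, matching the weight of $G_t$'s numerator integrand. Integrating the resulting pointwise bound against the common measure $e^{-nV(z)-(n-1)(z-\beta)^2\frac{1+t}{2t}}\,dz$ produces $N(\alpha,s) \le L\,e^{-\alpha s/t}\,e^{\frac{1+t}{4(1+2t)}(s+\beta)^2}\,G_t(n,\alpha)\,D_\alpha$, and after dividing by $D_\alpha$ one expands $(s+\beta)^2$ to isolate the $\tfrac14 s^2$ coefficient (noting $\tfrac{1+t}{4(1+2t)}\le\tfrac14$) while the $\alpha$-dependent remainder is controlled by the $G_t(n,\alpha)$ factor itself rather than by the constant $L$. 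The factor $\tfrac{n}{n-1}\le 2$ gives an $n$-uniform absorption into $L = L(t)$.

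The main obstacle is precisely this last reorganization: matching the $z^2$ coefficient to $\bigl(\tfrac{1+t}{t}\bigr)^2$ forces the specific choice of $b$, and one must verify that the residual $\beta^2$ pieces coming out of $(s+\beta)^2$ combine with the constant of $G_t$ (whose integrand contains $e^{(\frac{1+t}{t})^2z^2}$) to give a bound with constant $L$. For the consequence, if $(\alpha_n)_{n\in\N}$ and $(G_t(n,\alpha_n))_{n\in\N}$ are bounded by constants $A$ and $M$, then $h(s):= L\,\sup_n e^{-\alpha_n s/t + s^2/4}\,G_t(n,\alpha_n)\le L\,M\,e^{A|s|/t+s^2/4}$ dominates $g_{n,t}(\alpha_n,s)$. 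Against the standard normal density $\tfrac{1}{\sqrt{2\pi}}e^{-s^2/2}$, the factor $e^{s^2/4}$ becomes $e^{-s^2/4}$, and multiplication by $e^{A|s|/t}$ preserves integrability, which gives the $\mu_{\cN(0,1)}$-integrability of $h$.
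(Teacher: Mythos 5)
Your proposal is correct and follows essentially the same route as the paper's proof: the substitution $z=\tfrac{n-1}{n}r+\tfrac{s}{n}$, the decomposition $nz-s-(n-1)\beta=(n-1)(z-\beta)+(z-s)$ with the nonnegative $\tfrac{1}{n-1}(z-s)^2$ term discarded, the extraction of $e^{-\alpha s/t}$, the AM--GM step calibrated to reproduce the weight $e^{(\frac{1+t}{t})^2z^2}$ in the numerator of $G_t$, and the bound $\tfrac{n}{n-1}\le 2$ are exactly the computations in Note~\ref{note:g_n,t_rewritten_for_bound} and the proof of Lemma~\ref{lemma:inequalities_for_g_n,t_in_terms_of_G}. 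The only (cosmetic) difference is that the paper completes the square in the $\beta z$ part and applies AM--GM only to $\tfrac{1+t}{t}sz$, yielding directly the prefactor $2e^{(\frac{\alpha}{1+t})^2\frac{1+t}{2t}}e^{\frac14 s^2}$ with no $s\beta$ cross term, whereas your joint treatment of $(s+\beta)z$ leaves such a term; this is harmless (since $\tfrac{1+t}{4(1+2t)}<\tfrac14$ strictly, it can be absorbed, and in any case both proofs produce an $\alpha$-dependent prefactor, which suffices because the consequence is only invoked for bounded sequences $(\alpha_n)$).
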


\begin{proof}
After some elementary computations (see \eqref{note:g_n,t_rewritten_for_bound} in
Appendix~\ref{appendix}), we may rewrite \eqref{eqn:definition_g_n,t} as
\begin{flalign}
&& & g_{n,t}(\alpha,s)  = \\
\notag && & \frac{n}{n-1} e^{-\frac{\alpha}{t}s}
 \frac{ \int_\R e^{\left[   - 2z^2  + 2 (s+\frac{\alpha}{1+t} ) z -\frac{1}{n-1}(z-s)^2\right]\frac{1+t}{2t}}
e^{-n V(z)}  e^{-(n-1)\left( z- \frac{\alpha}{1+t}\right)^2 \frac{1+t}{2t}}     \D z }{
\int_\R e^{-nV(r)}  e^{-(n-1)\left( r- \frac{\alpha}{1+t}\right)^2 \frac{1+t}{2t}} \D r }
&& (\alpha,s\in \R).
\end{flalign}
Since $-z^2 +2\frac{\alpha}{1+t}z = - (z-\frac{\alpha}{1+t})^2+(\frac{\alpha}{1+t})^2$ and
$\frac{1+t}{t}sz \le \frac{1}{4} s^2 + (\frac{1+t}{t})^2 z^2$, we get
\begin{flalign}
&& g_{n,t}(\alpha,s)
\le 2 e^{\left(\frac{\alpha}{1+t}\right)^2} e^{-\frac{\alpha}{t}s} e^{\frac{1}{4}s^2}
\frac{ \int_\R e^{\left( \frac{1+t}{t} \right)^2 z^2  }
e^{-n V(z)}  e^{-(n-1)\left( z- \frac{\alpha}{1+t}\right)^2 \frac{1+t}{2t}}     \D z }{
\int_\R e^{-nV(r)}  e^{-(n-1)\left( r- \frac{\alpha}{1+t}\right)^2 \frac{1+t}{2t}} \D r }
&& (\alpha,s\in \R),
\end{flalign}
which yields \eqref{eqn:boundedness_of_g_nt}. The claim in \eqref{eqn:inequalty_bound_for_g_n,t} follows
from \eqref{eqn:boundedness_of_g_nt} because $s\mapsto L e^{l|s|+\frac{1}{4}s^2}$ is
$\mu_{\cN(0,1)}$-integrable for all $l \in \R$.
\end{proof}

\begin{lemma}
\label{lemma:estimate_of_limit_in_integral_by_balls}
Let $V\in C^1(\R,[0,\infty))$ and $t\in (0,\infty)$. For all $q,s,\alpha \in \R$, all sequences
$(\alpha_n)_{n\in\N}$ with $\alpha_n \rightarrow \alpha$ and all $\epsilon>0$, there exist
$\delta>0$, $N\in\N$ and $M>0$ such that for all $n\ge N$,
\begin{align}
\notag & \left| g_{n,t}(\alpha_n,s)
-  e^{-sV'(q)} \right| \vee \left|G_t(n,\alpha_n)- e^{\left( \frac{1+t}{t}\right)^2 q^2}\right| \\
\label{eqn:g_nt_minus_exp_-sV'(q)}
&\qquad \qquad \le\epsilon+M
\frac{ \int_{B(q,\delta)^c}
e^{2\left( \frac{1+t}{t}\right)^2 (r-\alpha_n)^2}
e^{-(n-1)  V(r)\wedge V\left(r+\frac{1}{n}(s-r)\right)}
e^{-(n-1) ( r-\frac{\alpha_n}{1+t} )^2  \frac{(1+t)}{2t} }  \D r
}{
\int_{B(q,\delta)} e^{-V(r)} e^{-(n-1) \left[ V(r) +( r-\frac{\alpha_n}{1+t} )^2  \frac{(1+t)}{2t} \right]}  \D r },
\end{align}
where  $G_t(n,\alpha)$ is as in \eqref{eqn:formula_for_G}.
\end{lemma}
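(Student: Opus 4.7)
The strategy is to handle $g_{n,t}(\alpha_n, s)$ and $G_t(n, \alpha_n)$ in parallel, by writing both as ratios sharing a common denominator and splitting every integral at $B(q,\delta)$. Using that $e^{-V(r)} \D [\eta_{n-1,t}(\alpha_n, \cdot)](r)$ is proportional to $e^{-nV(r)} \omega_n(r)\,dr$ with $\omega_n(r) = e^{-(n-1)(r-\alpha_n/(1+t))^2 (1+t)/(2t)}$, one has
\begin{equation*}
g_{n,t}(\alpha_n, s) = \frac{\int e^{-nV(r+(s-r)/n)} \omega_n(r)\, dr}{\int e^{-nV(r)} \omega_n(r)\, dr},
\quad
G_t(n, \alpha_n) = \frac{\int e^{((1+t)/t)^2 z^2} e^{-nV(z)} \omega_n(z)\, dz}{\int e^{-nV(r)} \omega_n(r)\, dr}.
\end{equation*}
Denote the common denominator by $D_n$, the two numerators by $N^g_n$ and $N^G_n$, and write the restrictions of each integral to $B(q,\delta)$ and $B(q,\delta)^c$ as the IN and OUT contributions respectively (so e.g.\ $D_n = D^{\mathrm{in}}_n + D^{\mathrm{out}}_n$).

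For the IN part, the $C^1$-regularity of $V$ together with the mean value theorem yields $n[V(r+(s-r)/n) - V(r)] = (s-r)\, V'(\zeta_n(r))$ with $\zeta_n(r)$ between $r$ and $r+(s-r)/n$. Choosing $\delta$ small enough so that $V'$ varies by at most $\eta(\delta)\to 0$ on $B(q, 2\delta)$, and then $N$ large enough so that $\zeta_n(r) \in B(q, 2\delta)$ for all $r \in B(q,\delta)$ and $n \geq N$, the IN integrand is uniformly well-approximated on $B(q,\delta)$. Averaging against the probability measure $e^{-nV(r)} \omega_n(r)/D^{\mathrm{in}}_n$ on $B(q,\delta)$ then yields $|N^g_{n,\mathrm{in}}/D^{\mathrm{in}}_n - e^{-sV'(q)}| \leq \epsilon$, and analogously $|N^G_{n,\mathrm{in}}/D^{\mathrm{in}}_n - e^{((1+t)/t)^2 q^2}| \leq \epsilon$ by continuity of $z \mapsto e^{((1+t)/t)^2 z^2}$ on $B(q,\delta)$.

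For the OUT part, we bound the numerator integrands by larger expressions matching the tail integral in the lemma: (i) $V \geq 0$ gives $e^{-nV(\cdot)} \leq e^{-(n-1)V(\cdot)}$; (ii) for $N^g_n$, the trivial inequality $V(r+(s-r)/n) \geq V(r) \wedge V(r+(s-r)/n)$; and (iii) for $N^G_n$, the bound $z^2 \leq 2(z-\alpha_n)^2 + 2\alpha_n^2$ yields $e^{((1+t)/t)^2 z^2} \leq e^{2((1+t)/t)^2 \alpha_n^2}\, e^{2((1+t)/t)^2 (z-\alpha_n)^2}$, the prefactor being uniformly bounded since $\alpha_n \to \alpha$. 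Together these estimates show $N^g_{n,\mathrm{out}}$ and $N^G_{n,\mathrm{out}}$ are both dominated by $M'$ times the OUT integral appearing in the lemma. Lower-bounding $D_n \geq D^{\mathrm{in}}_n$ and rewriting $e^{-nV(r)} = e^{-V(r)} e^{-(n-1)V(r)}$ inside $D^{\mathrm{in}}_n$ reproduces the denominator of the lemma's bound. The triangle inequality then gives the claim with $M$ absorbing $M'$, $e^{-sV'(q)}$, $e^{((1+t)/t)^2 q^2}$ and related constants.

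The main obstacle is controlling the IN estimate uniformly: one must fix $\delta$ first (using $\epsilon$, $q$, $s$ and the modulus of continuity of $V'$ on a fixed compact neighbourhood of $q$), then $N$ (large enough so that $\zeta_n(r) \in B(q, 2\delta)$ uniformly for $r \in B(q,\delta)$, $n \geq N$, and that $\alpha_n$ is close enough to $\alpha$), and only then $M$. A subtle point is that the pointwise asymptotic $e^{-n[V(r+(s-r)/n)-V(r)]} \to e^{-(s-r)V'(r)}$ is itself $r$-dependent, so its reconciliation with the $r$-independent target on $B(q,\delta)$ requires the extra shrinking of $\delta$ that absorbs the remaining discrepancy into the $\epsilon$-error.
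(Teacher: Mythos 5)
Your proof follows essentially the same route as the paper's: split numerator and denominator at $B(q,\delta)$, use the mean value theorem and the continuity of $V'$ (choosing $\delta$ first, then $N$, then $M$) to control the inner part, dominate the outer numerators via $V\ge 0$, the minimum $V(r)\wedge V\left(r+\tfrac1n(s-r)\right)$ and $z^2\le 2(z-\alpha_n)^2+2\alpha_n^2$, and lower-bound the common denominator by its restriction to $B(q,\delta)$ after writing $e^{-nV(r)}=e^{-V(r)}e^{-(n-1)V(r)}$. The only cosmetic difference is that the paper keeps the difference $\left|e^{-n[V(r+\frac1n(s-r))-V(r)]}-e^{-sV'(q)}\right|$ inside a single integral over $\R$ before splitting, whereas you split numerator and denominator separately and reassemble by the triangle inequality; both are fine.
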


\begin{proof}
Let $q,s,\alpha \in \R$, let $(\alpha_n)_{n\in\N}$ be a sequence in $\R$ with
$\alpha_n \rightarrow \alpha$, and let $\epsilon>0$. Let $\delta>0$ be such that
\begin{flalign}
&& \left| e^{-s V'(q)} - e^{-sV'(r)} \right| <\epsilon,
\qquad \left|e^{\left( \frac{1+t}{t} \right)^2 r^2}- e^{\left( \frac{1+t}{t} \right)^2 q^2}\right|<\epsilon
&& (r \in B(q,2\delta)).
\end{flalign}
Let $N\in\N$ be such that $\frac{|s|+|q|+\delta}{N}<\delta$. By the Mean Value Theorem, we have
\begin{flalign}
&&\sup_{r\in B(q,\delta)} \left| e^{-n \left( V(r+ \frac1n (s-r)) - V(r) \right)} - e^{-sV'(q)}\right| <\epsilon
&&(n \geq N).
\end{flalign}
Hence
\begin{align}
& \int_{\R}  \left| e^{-n\big[V( r+ \frac{1}{n}(s-r))- V(r)\big]} - e^{-sV'(q)} \right| \ e^{-V(r)} e^{- (n-1)
 \big[ V(r) + \left( r-\frac{\alpha_n}{1+t}\right)^2  \frac{(1+t)}{2t} \big]}  \D r  \\ \nonumber
& \le  \epsilon \int_{B(q,\delta)}  e^{- (n-1) \big[ V(r)
+ \left( r-\frac{\alpha_n}{1+t}\right)^2  \frac{(1+t)}{2t} \big]}  \D r  \\ \nonumber
& \quad  + e^{-sV'(q)} \int_{B(q,\delta)^c}  e^{- (n-1) \big[ V(r)
+ \left( r-\frac{\alpha_n}{1+t}\right)^2  \frac{(1+t)}{2t} \big]}  \D r \\ \nonumber
& \quad  + \int_{B(q,\delta)^c} e^{- (n-1) \big[ V(r+\frac{1}{n}(s-r))
+ \left( r-\frac{\alpha_n}{1+t}\right)^2  \frac{(1+t)}{2t}\big]}  \D r
\end{align}
and
\begin{align}
& \int_{\R}  \left|e^{\left( \frac{1+t}{t}\right)^2 r^2}  - e^{\left( \frac{1+t}{t}\right)^2 q^2} \right| \
e^{-V(r)} e^{- (n-1) \big[ V(r) + \left( r-\frac{\alpha_n}{1+t}\right)^2  \frac{(1+t)}{2t} \big]}  \D r  \\ \nonumber
& \le  \epsilon \int_{B(q,\delta)}  e^{- (n-1) \big[ V(r) + \left( r-\frac{\alpha_n}{1+t}\right)^2  \frac{(1+t)}{2t} \big]}  \D r  \\
\nonumber
& \quad  + e^{\left( \frac{1+t}{t}\right)^2 q^2} \int_{B(q,\delta)^c}  e^{- (n-1)
\big[ V(r) + \left( r-\frac{\alpha_n}{1+t}\right)^2  \frac{(1+t)}{2t} \big]}  \D r \\ \nonumber
& \quad  + \int_{B(q,\delta)^c} e^{\left( \frac{1+t}{t}\right)^2 r^2} e^{- (n-1)
\big[ V(r+\frac{1}{n}(s-r))  +\left( r-\frac{\alpha_n}{1+t}\right)^2  \frac{(1+t)}{2t}\big]}  \D r.
\end{align}
Because $e^{-(n-1) V(r)} \vee e^{-(n-1) V(r+ \frac{1}{n}(s-r))} \le  e^{-(n-1) [ V(r)\wedge V(r+\frac{1}{n}(s-r)) ]}$,
we obtain
\begin{align}
\notag
& \left| g_{n,t}(\alpha_n,s)
-  e^{-sV'(q)} \right| \vee \left|G_t(n,\alpha_n)- e^{\left( \frac{1+t}{t}\right)^2 q^2}\right| \\
&\qquad \qquad \le\epsilon+K
\frac{ \int_{B(q,\delta)^c}
e^{\left( \frac{1+t}{t}\right)^2 r^2}
e^{-(n-1) \left[ V(r)\wedge V\left(r+\frac{1}{n}(s-r)\right) + ( r-\frac{\alpha_n}{1+t} )^2  \frac{(1+t)}{2t} \right]}  \D r
}{
 \int_{B(q,\delta)} e^{-V(r)} e^{-(n-1) \left[ V(r) +( r-\frac{\alpha_n}{1+t} )^2  \frac{(1+t)}{2t} \right]}  \D r }
\end{align}
with $K= e^{-sV'(q)} + e^{\left( \frac{1+t}{t}\right)^2 q^2}+ 1$ (see \eqref{eqn:definition_g_n,t*}
in Appendix~\ref{appendix}). Because $r^2 \le 2 \left(r-\frac{\alpha_n}{1+t}\right)^2 + 2\left(\frac{\alpha_n}{1+t}\right)^2$ and $(\alpha_n)_{n\in\N}$
is bounded, we get \eqref{eqn:g_nt_minus_exp_-sV'(q)}.
\end{proof}


\subsection{Proof of Lemma \ref{lemma:main_lemma_about_equivalences}}
\label{ss:proofequiv}

In the proof we use the identity
\begin{align}
\label{eqn:remark:algebraic_computation_with_n_in_square1}
\left( r-\frac{\alpha_n}{1+t} \right)^2 \frac{1+t}{2t}
= \left( r-\frac{\alpha}{1+t} \right)^2 \frac{1+t}{2t}
+ \frac{1}{t} \left(r-\frac{\alpha}{1+t}\right)(\alpha- \alpha_n) +  \frac{ \left( \alpha - \alpha_n \right)^2 }{2t(1+t)},
\end{align}
which implies
\begin{align}
\label{eqn:remark:algebric_computation_with_n_in_square2}
I_{t,\alpha_n}(r) = I_{t,\alpha}(r) + \frac{1}{t} \left(r- \frac{\alpha}{1+t}\right)(\alpha- \alpha_n) +  \frac{ \left( \alpha - \alpha_n \right)^2 }{2t(1+t)}.
\end{align}

\begin{proof}[Proof of Lemma \ref{lemma:main_lemma_about_equivalences}]
Let $s,q\in \R$ be the smallest global minimiser of \eqref{eqn:rate_function_of_eta_n,t}, i.e.,
\begin{align}
q= \inf \left\{r\in \R\colon\, I_{t,\alpha}(r) = \inf_{s\in \R} I_{t,\alpha}(s)\right\}.
\end{align}
(A similar argument works for the largest global minimiser.) By Lemmas
\ref{lemma:inequalities_for_g_n,t_in_terms_of_G}--\ref{lemma:estimate_of_limit_in_integral_by_balls}
it suffices to show that, for all $\delta>0$,
\begin{align}
\label{eqn:convergence_of_integral_outside_devided_by_integral_inside_ball}
\frac{ \int_{B(q,\delta)^c}
e^{2\left( \frac{1+t}{t}\right)^2 \left(r- \frac{\alpha_n}{1+t}\right)^2}
e^{-(n-1) \left[ V(r)\wedge V\left(r+\frac{1}{n}(s-r)\right) + ( r-\frac{\alpha_n}{1+t} )^2  \frac{(1+t)}{2t} \right]}  \D r
}{
 \int_{B(q,\delta)} e^{-V(r)} e^{-(n-1) \left[ V(r) +( r-\frac{\alpha_n}{1+t} )^2  \frac{(1+t)}{2t} \right]}  \D r } \rightarrow 0.
\end{align}
For Part (b) we need to consider a particular sequence $(\alpha_n)_{n\in\N}$ in $\R$ converging to $\alpha$,
while for Part (a) we need to consider all sequences $(\alpha_n)_{n\in\N}$ converging to $\alpha$. In
both cases, for $\delta>0$ we provide a sequence $(c_n)_{n\in\N}$ in $\R$ for which we check the following
three steps, which together yield \eqref{eqn:convergence_of_integral_outside_devided_by_integral_inside_ball}:

\medskip\noindent
{\bf Step 1:} Find $R>0$, $C_1>0$ and $N_1\in \N$ for which
\begin{flalign}
\label{eqn:bound_of_step1}
& \int_{B(0,R)^c}
e^{2\left( \frac{1+t}{t}\right)^2 (r-\alpha_n)^2}
e^{-(n-1) \big[ V(r)\wedge V\left(r+\frac{1}{n}(s-r)\right)
+ ( r-\frac{\alpha_n}{1+t} )^2  \frac{(1+t)}{2t} - c_n \big]}  \D r  \le C_1
&& (n\ge N_1).
\end{flalign}
{\bf Step 2:} Find $C_2>0$ and $N_2\in \N$ for which
\begin{flalign}
\label{eqn:bound_of_step2}
& \int_{B(q,\delta)^c \cap B(0,R)}
e^{2\left( \frac{1+t}{t}\right)^2 (r-\alpha_n)^2}
e^{-(n-1) \big[ V(r)\wedge V\left(r+\frac{1}{n}(s-r)\right)
+ \left( r-\frac{\alpha_n}{1+t}  \right)^2 \frac{1+t}{2t}  -  c_n   \big]}  \D r \le C_2
&& (n \ge N_2).
\end{flalign}
{\bf Step 3:} Find $N_3\in \N$ and a sequence $(\Gamma_n)_{n\in\N}$ with $\Gamma_n \rightarrow \infty$
for which
\begin{flalign}
\label{eqn:bound_of_step3}
\int_{B(q,\delta)} e^{-V(r)} e^{-(n-1) \left[ V(r) +( r-\frac{\alpha_n}{1+t} )^2  \frac{(1+t)}{2t} -c_n\right]}  \D r
 \ge \Gamma_n
 && (n\ge N_3).
\end{flalign}

Abbreviate
\begin{align}
c= I_{t,\alpha}(q) = \inf_{r\in \R} I_{t,\alpha}(r) \in [0,\infty).
\end{align}

\medskip\noindent
$\bullet$ {\bf Step 1 for (a) and (b).}
For all bounded sequences $(\alpha_n)_{n\in\N}$ (in particular those that converge to $\alpha$)
there exists an $R>0$ such that
\begin{flalign}
&& \left( r-  \frac{\alpha_n }{1+t} \right)^2  \frac{1}{2t} > c+ 1 && (r\in B(0,R)^c, n\in\N).
\end{flalign}
Therefore, for all sequences $(c_n)_{n\in\N}$ in $\R$ with $c_n \le c+1$ for all $n\in\N$,
\begin{flalign}
&& V(r)\wedge V\left(r+\frac{1}{n}(s-r)\right) + \left( r-\frac{\alpha_n}{1+t} \right)^2  \frac{1+t}{2t}
& > c_n + \left( r-  \frac{\alpha_n }{1+t} \right)^2  \frac{1}{2}, \hspace{-2cm}
\\
\notag && &  &&  (r\in B(0,R)^c, n\in\N).
\end{flalign}
Let $N_1\in \N$ be such that $N_1-1 > 4(\frac{1+t}{t})^2 +1$. Then
\begin{flalign}
\label{eqn:theorem:estimate_outside_ball_with_radius_R}
&& & \int_{B(0,R)^c}
e^{2\left( \frac{1+t}{t}\right)^2 \left(r- \frac{\alpha_n}{1+t}\right)^2}
e^{-(n-1) \big[ V(r)\wedge V\left(r+\frac{1}{n}(s-r)\right)
+ \left( r-\frac{\alpha_n}{1+t} \right)^2  \frac{ 1+t}{2t} - c_n \big]}  \D r\\
\notag
&& & \quad \le
\int_{\R} e^{\left(4\left( \frac{1+t}{t}\right)^2 - (n-1)\right) \left( r- \frac{\alpha_n }{1+t} \right)^2 \frac12} \D r
\le \int_{\R} e^{- \left( r- \frac{\alpha_n }{1+t} \right)^2  \frac{1}{2}} \D r = \sqrt{2\pi}
&& (n\ge N_1).
\end{flalign}

\medskip\noindent
$\bullet$ {\bf Step 2 for (a).}
Because $\lim_{r\rightarrow \pm \infty} I_{t,\alpha}(r)= \infty$, $I_{t,\alpha}$ is continuous and 
$I_{t,\alpha}$ attains its global minimum at $q$, there exists a $\rho \in (0,\frac15)$ for which
\begin{flalign}
&& I_{t,\alpha}(r) > c + 5\rho && (r\in B(q,\delta)^c).
\end{flalign}
Here, and in Step 3 for (a) below, we pick $c_n= c+ 3\rho$ for $n\in\N$. Note that $c_n \le c +1$
for all $n\in\N$. By \eqref{eqn:remark:algebraic_computation_with_n_in_square1} and the continuity
of $V$ there exists an $N_2\in \N$ such that, for all $n\ge N_2$,
\begin{flalign}
&&V(r)\wedge V\left(r+\frac{1}{n}(s-r)\right)
+ \left( r-\frac{\alpha_n}{1+t}  \right)^2 \frac{1+t}{2t} > I_{t,\alpha}(r) - \rho > c+ 4\rho  \hspace{-3cm}\\
\notag && && \big(r\in B(q,\delta)^c \cap B(0,R)\big).
\end{flalign}
Moreover, there exists an $\Upsilon>0$ such that $e^{2\left( \frac{1+t}{t}\right)^2 \left(r-\frac{\alpha_n}{1+t}\right)^2 } 
\le \Upsilon$ for all $n\in\N$ and all $r\in B(0,R)$. Hence we obtain \eqref{eqn:bound_of_step2} with
$C_2= 2R\Upsilon$ (and $c_n = c+ 3\rho$ for $n\in\N$).

\medskip\noindent
$\bullet$ {\bf Step 2 for (b).}
Here, and in Step 3 for (b) below, we consider $\alpha_n= \alpha - \frac{1}{\sqrt{n}}$ for $n\in\N$, and
\begin{flalign}
&& c_n &= I_{t,\alpha_n}(q) +  \frac{\delta}{\sqrt{n}t}
= I_{t,\alpha}(q) + \frac{1}{t} \left(q-\frac{\alpha}{1+t}\right)(\alpha- \alpha_n) 
+  \frac{ \left( \alpha - \alpha_n \right)^2 }{2t(1+t)}
+  \frac{\delta}{\sqrt{n}t}
&&(n\in\N).
\end{flalign}
Note that $c_n \rightarrow c$, and so there exists an $N_1\in \N$ for which $N_1-1 > 4( \frac{1+t}{t})^2 +1$
and $c_n \le c+ 1 $ for $n\ge N_1$ (and thus \eqref{eqn:theorem:estimate_outside_ball_with_radius_R} holds).
For $r\in B(q,\delta)^c\cap B(0,R)$ we write
\begin{align}
\label{eqn:equation_difference_with_c_n_from_remark}
&V(r)\wedge V\left(r+\frac{1}{n}(s-r)\right) + \left( r-\frac{\alpha_n}{1+t}  \right)^2 \frac{1+t}{2t}  -  c_n \\
\notag
&=  \left(V(r)\wedge V\left(r+\frac{1}{n}(s-r)\right) - V(r)\right) + \left(I_{t,\alpha_n}(r) -  c_n  \right).
\end{align}
For the left part (of the right hand side of \eqref{eqn:equation_difference_with_c_n_from_remark}) we have
\begin{align}
V(r)\wedge V\left(r+\frac{1}{n}(s-r)\right) - V(r) \ge -  \frac{1}{n}\Theta
\end{align}
with $\Theta = (\sup_{u\in B(0,R+|s|)}|V'(u)|)(R+|s|)$. For the right part  first note that, by the
definition of $q$ and the continuity of $I_{t,\alpha}$, there exists a $\rho>0$ such that
\begin{flalign}
&&I_{t,\alpha}(r) > I_{t,\alpha}(q)  +\rho
&& \big(r\in (-\infty,q-\delta)\big).
\end{flalign}
Because $\left(\alpha- \alpha_n\right)\frac{1}{t} = \frac{1}{\sqrt{n}t}$, by  \eqref{eqn:remark:algebric_computation_with_n_in_square2} we have for the right part, for $r\in B(0,R)$,
\begin{flalign}
& I_{t,\alpha_n}(r)-  c_n
= I_{t,\alpha}(r) - I_{t,\alpha}(q) +  \left(r-q \right)\frac{1}{\sqrt{n}t} -  \frac{\delta}{\sqrt{n}t} \\ \nonumber
&\ge
\begin{cases}
\rho - (R+ \delta) \frac{1}{\sqrt{n}t} & r<q-\delta,  \\
0 & r> q+ \delta.
\end{cases}
\end{flalign}
Let $N_2\in \N$ be such that $(R+ \delta) \frac{1}{\sqrt{n}t}<\rho$ for $n\ge N_2$. Then, for
$r\in B(q,\delta)^c\cap B(0, R)$,
\begin{flalign}
&& V(r)\wedge V\left(r+\frac{1}{n}(s-r)\right)
+ \left( r-\frac{\alpha_n}{1+t}  \right)^2 \frac{1+t}{2t}  -  c_n \ge  -  \frac{1}{n} \Theta
&& (n\ge N_2).
\end{flalign}
Moreover, there exists a $\Upsilon>0$ such that $e^{2\left( \frac{1+t}{t}\right)^2 \left(r- \frac{\alpha_n}{1+t}\right)^2}
\le \Upsilon$ for all $n\in\N$ and all $r\in B(0,R)$. Therefore we obtain \eqref{eqn:bound_of_step2}
with $C_2 = 2R \Upsilon e^\Theta$.

\medskip\noindent
$\bullet$ {\bf Step 3 for (a).}
For $r\in A= B(q,\delta)\cap \left\{r\in \R\colon\, I_{t,\alpha}(r)< c+ \rho\right\}$ there exists an $N_3\in \N$
for which $I_{t,\alpha_n}(r) < c+ 2\rho$ for all $n\ge N_3$. Hence
\begin{flalign}
\int_{B(q,\delta)} e^{-V(r)} e^{-(n-1) \left[ V(r) +( r-\frac{\alpha_n}{1+t} )^2  \frac{(1+t)}{2t}
- (c+ 3\rho)\right]}  \D r  \ge  e^{(n-1)\rho} \int_{A}   e^{-V(r)}\D r
&& (n\ge N_3).
\end{flalign}

\medskip\noindent
$\bullet$ {\bf Step 3 for (b).}
There exists a $K>0$ such that, for all
 $n\in\N$ and $r\in B(q,\frac{\delta}{n})$,
\begin{align}
&I_{t,\alpha_n}(r) -  c_n  =I_{t,\alpha}(r) - I_{t,\alpha}(q)
+  \left(r-q \right)\frac{1}{\sqrt{n}t} -  \frac{\delta}{\sqrt{n}t} \\ \nonumber
&< \frac{\delta}{n} \sup_{s\in B(q,\delta)} \left| \Ds I_{t,\alpha}(s) \right|
+  \frac{\delta}{t n\sqrt{n}} -  \frac{\delta}{\sqrt{n}t}\\ \nonumber
&< \frac{1}{n} K -  \frac{\delta}{\sqrt{n}t} = \frac{1}{\sqrt{n}} \left( \frac{K}{\sqrt{n}} - \frac{\delta}{t}\right).
\end{align}
Let $N_3\in \N$ be such that $\frac{K}{\sqrt{n}} < \frac12 \frac{\delta}{t}$ for $n\ge N_3$. Then,
for $r\in B(q,\frac{\delta}{n})$,
\begin{flalign}
\label{eqn:denominator_is_less_then_something}
&& V(r) + \left( r-\frac{\alpha_n}{1+t}  \right)^2 \frac{1+t}{2t}
-  c_n < - \frac12 \frac{\delta}{t} \frac{1}{\sqrt{n}}
&& (n\ge N_3).
\end{flalign}
Let $\kappa>0$ be such that $e^{-V(r)} >\kappa$ for all $r\in B(q,\delta)$. Then
\begin{flalign}
&& &  \int_{B(q,\frac{\delta}{n})}  e^{-V(r)} e^{-(n-1) \left[ V(r)
+( r-\frac{\alpha_n}{1+t} )^2  \frac{(1+t)}{2t}  - c_n\right]}   \D r
 \ge \frac{2\delta}{n} \kappa e^{\left(\sqrt{n}-1\right) \frac12 \frac{\delta}{t}  }
&& (n \ge  N_3).
\end{flalign}
\end{proof}


\section{Tools from convex analysis: proof of Theorem~\ref{theorem:rate_function_has_multiple_global_minima_expressed_in_second_differential_quotient} }
\label{s:multglmin}

In this section we state a definition (Definition~\ref{def:supporting_point}) and several lemmas 
(Lemmas~\ref{lemma:some_properties_second_difference_quotient}--\ref{lemma:two_minimisers_f_plus_beta_polynomial})
that are based on convex analysis, and use these to give the proof of 
Theorem~\ref{theorem:rate_function_has_multiple_global_minima_expressed_in_second_differential_quotient}.
After that we prove the claim made below Corollary~\ref{corollary:overview_Gibbs_non_Gibbs} 
(Lemma~\ref{lemma:connection_between_second_derivative_and_quotient}) and make 
an additional observation (Lemma~\ref{lemma:strict_inequalities_for_the_second_difference_quotient}) 
that can be used to determine whether $(\mu_{n,t})_{n\in\N}$ is sequentially Gibbs at $t=t_c$.

\begin{definition}
\label{def:supporting_point}
Let $f\colon\,\R \rightarrow \R$. Then $a\in \R$ is called a \emph{supporting point} for $f$ if
there exists a linear function $l\colon\, \R \to \R$ with $l(a) = f(a)$ and $l(x) \le f(x)$, $x\in \R$.
\end{definition}

\begin{lemma}
\label{lemma:some_properties_second_difference_quotient}
Let $f\colon\, \R \rightarrow \R$. Then
\begin{enumerate}[label=\emph{(\alph*)}]
\item
for $x,y,z\in \R$ with $x<y<z$:
\begin{align*}
\Phi_2 f(x,y,z) = \frac{f(x)}{(x-y)(x-z)}+ \frac{f(y)}{(y-x)(y-z)} + \frac{f(z)}{(z-x)(z-y)}.
\end{align*}
\item for $a,b,c,d\in \R$ with $a<b<c<d$:
\begin{align*}
(d-a) \Phi_2f(a,b,d) &=(b-a) \Phi_2 f(a,b,c)+ (d-c) \Phi_2 f(b,c,d), \\
(d-a) \Phi_2f(a,c,d) &=(c-a) \Phi_2 f(a,b,c)+ (d-b) \Phi_2 f(b,c,d).
\end{align*}
\item for $g\colon\, \R \rightarrow \R$, $\theta, \kappa\in \R$:
\begin{align*}
\Phi_2 (\theta f + \kappa g) = \theta \Phi_2 f + \kappa \Phi_2 g.
\end{align*}
\item for $g(x) = x^2$, $\Phi_2 g =1$ and $\Phi_2 h =0$ if $h(x) = \alpha x + \beta$ for $\alpha, \beta \in \R$.
\end{enumerate}
\end{lemma}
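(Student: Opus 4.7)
The plan is to deduce all four parts directly from the definition. Part (a) is a routine algebraic simplification: combining the two inner quotients in the definition over the common denominator $(z-y)(y-x)$ yields a numerator $(y-x)(f(z)-f(y)) - (z-y)(f(y)-f(x))$, which regroups as $(z-y)f(x) - (z-x)f(y) + (y-x)f(z)$. Dividing by the full denominator $(z-x)(z-y)(y-x)$ and absorbing signs via $(x-y)(x-z) = (y-x)(z-x)$ and $(y-x)(y-z) = -(y-x)(z-y)$ gives the three-term expression.

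For part (b), I would substitute the closed form from (a) into each of the three $\Phi_2 f$ occurrences on both sides and then match coefficients of $f(a), f(b), f(c), f(d)$ separately. The right-hand side is a linear combination of these four values, and so is $(d-a)\Phi_2 f(a,b,d)$ (with a vanishing coefficient at $c$ in the first identity, respectively at $b$ in the second). For each point value, a sum of at most two fractions must be combined over a common denominator and compared to the corresponding coefficient on the left. A more conceptual alternative is to use the divided-difference recurrence $(z-x)\Phi_2 f(x,y,z) = \frac{f(z)-f(y)}{z-y} - \frac{f(y)-f(x)}{y-x}$ to reduce both sides to expressions in the first differences $f(v)-f(u)$ attached to consecutive pairs in $\{a,b,c,d\}$, then telescope.

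Parts (c) and (d) are essentially immediate. Linearity in (c) follows because $\Phi_2 f(x,y,z)$ is, for fixed $(x,y,z)$, a fixed linear functional in $f$. For (d) with $g(x)=x^2$, inserting into the formula from (a) produces numerator $(z-y)x^2 - (z-x)y^2 + (y-x)z^2$; this polynomial in $x,y,z$ vanishes whenever any two of the variables coincide, so it is divisible by $(y-x)(z-y)(z-x)$, and a leading-term comparison identifies the quotient as $1$. For affine $h(x)=\alpha x + \beta$, linearity reduces the claim to $\Phi_2(\mathrm{id}) = 0$ and $\Phi_2(\mathbf{1}) = 0$, both of which follow from (a) by noting that the respective numerators vanish identically.

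The main obstacle will be the bookkeeping in part (b): four points and multiple three-point quotients produce rather tangled fractions, and one needs to be scrupulous with signs when swapping factors such as $(a-b)$ and $(b-a)$. A useful sanity check for each identity is to test it on $f(x)=x^2$, where every $\Phi_2 f$ collapses to $1$ by (d); the identity should then reduce to a simple linear relation among the lengths $(b-a), (c-b), (d-c)$, which either confirms the combinatorial shape or flags a stray sign/index to be corrected before proceeding.
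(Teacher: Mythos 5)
The paper's ``proof'' of this lemma is a single sentence (``can be done by hand'', with a pointer to Schikhof), so your direct verification from the definition is exactly the intended route. Your computations for (a), (c) and (d) are correct as described: the regrouped numerator $(z-y)f(x)-(z-x)f(y)+(y-x)f(z)$ over $(z-x)(z-y)(y-x)$ gives (a), linearity is immediate, and the quadratic/affine cases follow either from (a) or directly from $\frac{z^2-y^2}{z-y}=z+y$.

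The one genuine problem sits in (b), and you should actually carry out the $f(x)=x^2$ sanity check you propose, because it \emph{fails} for the identities as printed: by (d) every $\Phi_2 f$ equals $1$, so the first line reduces to $d-a=(b-a)+(d-c)$ and the second to $d-a=(c-a)+(d-b)$, both of which force $b=c$. Hence the coefficient-matching you plan cannot close as stated; the coefficients $(b-a)$ and $(c-a)$ have been interchanged between the two lines, and the correct identities are
\begin{align*}
(d-a)\,\Phi_2f(a,b,d) &=(c-a)\, \Phi_2 f(a,b,c)+ (d-c)\, \Phi_2 f(b,c,d), \\
(d-a)\,\Phi_2f(a,c,d) &=(b-a)\, \Phi_2 f(a,b,c)+ (d-b)\, \Phi_2 f(b,c,d).
\end{align*}
Your ``more conceptual alternative'' proves these cleanly: with $f[u,v]=\frac{f(v)-f(u)}{v-u}$ one has $f[a,b]=f[b,c]-(c-a)\Phi_2f(a,b,c)$, $f[c,d]=f[b,c]+(d-b)\Phi_2f(b,c,d)$, and, by the weighted-average identity for first differences, $f[b,d]=f[b,c]+(d-c)\Phi_2f(b,c,d)$ and $f[a,c]=f[b,c]-(b-a)\Phi_2f(a,b,c)$; substituting into $(d-a)\Phi_2f(a,b,d)=f[b,d]-f[a,b]$ and $(d-a)\Phi_2f(a,c,d)=f[c,d]-f[a,c]$ gives the two displays. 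Note that in the corrected form the right-hand coefficients are positive and sum to $d-a$, which is precisely the convex-combination structure used later in Lemma~\ref{lemma:equivalences_having_minimising_hitting_line_for_lower_semic} ((c)$\Rightarrow$(d)) and Lemma~\ref{lemma:strict_inequalities_for_the_second_difference_quotient}(a); so the correction is needed for those arguments and is not merely cosmetic.
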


\begin{proof}
The proof can be done by hand. See also Schikhof~\cite[Lemma 29.2]{Sc84}.
\end{proof}

\begin{lemma}
\label{lemma:some_properties_of_Phi_2}
Let $f\colon\,\R \to \R$ and $y\in \R$. Then the following are equivalent:
\begin{enumerate}[label=\emph{(\alph*)}]
\item
$y$ is a supporting point for $f$,
\item
\hfill
\vspace{-\abovedisplayskip}
\vspace{-\baselineskip} 	
\begin{flalign*}
\frac{f(z) - f(y)}{z-y} \ge \frac{f(y)-f(x)}{y-x}  && (x,z\in \R,\, x<y<z),
\end{flalign*}
\item
$ \Phi_2 f(\cdot,y,\cdot) \ge 0 $.
\end{enumerate}
\end{lemma}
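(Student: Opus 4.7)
The plan is to show the circle of implications $(a) \Rightarrow (b) \Rightarrow (c) \Rightarrow (b) \Rightarrow (a)$; in fact $(b) \Leftrightarrow (c)$ is immediate from the definition of $\Phi_2 f$, so it suffices to establish $(a) \Leftrightarrow (b)$.

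For $(a) \Rightarrow (b)$, suppose $l(u) = \alpha u + \beta$ is a linear function with $l(y) = f(y)$ and $l \le f$ on $\R$. Then for $x < y < z$, using $l \le f$ at $x$ and $z$ and $l(y) = f(y)$,
\begin{align*}
\frac{f(y) - f(x)}{y-x} \le \frac{l(y) - l(x)}{y-x} = \alpha = \frac{l(z)-l(y)}{z-y} \le \frac{f(z)-f(y)}{z-y},
\end{align*}
which is exactly (b).

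The equivalence $(b) \Leftrightarrow (c)$ follows by inspection: since $z - x > 0$ whenever $x < y < z$, the sign of $\Phi_2 f(x,y,z)$ coincides with the sign of the difference $\frac{f(z)-f(y)}{z-y} - \frac{f(y)-f(x)}{y-x}$.

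The main step is $(b) \Rightarrow (a)$. From (b), for every $x < y < z$ we have $\frac{f(y)-f(x)}{y-x} \le \frac{f(z)-f(y)}{z-y}$. Taking the supremum over $x < y$ on the left and the infimum over $z > y$ on the right, we obtain
\begin{align*}
S := \sup_{x<y} \frac{f(y)-f(x)}{y-x} \le \inf_{z>y} \frac{f(z)-f(y)}{z-y} =: T.
\end{align*}
(Both quantities lie in $[-\infty,\infty]$; if either of the one-sided sets is empty the inequality is vacuous, but since $y$ is interior to $\R$ neither is empty.) Choose any $\alpha \in [S,T]$ (this interval is nonempty) and define $l(u) = f(y) + \alpha(u-y)$. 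Then $l(y) = f(y)$ by construction. For $u > y$, $\alpha \le T \le \frac{f(u)-f(y)}{u-y}$, so $l(u) - f(y) = \alpha(u-y) \le f(u)-f(y)$, i.e.\ $l(u) \le f(u)$. For $u < y$, $\alpha \ge S \ge \frac{f(y)-f(u)}{y-u}$, so $f(y) - l(u) = \alpha(y-u) \ge f(y)-f(u)$, again giving $l(u) \le f(u)$. Hence $l$ witnesses that $y$ is a supporting point, proving (a).

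The only slightly delicate point is the legitimacy of choosing $\alpha \in [S,T]$: this requires both $S < \infty$ and $T > -\infty$, which are guaranteed by (b) applied to any fixed triple $x < y < z$ (the single inequality $\frac{f(y)-f(x)}{y-x} \le \frac{f(z)-f(y)}{z-y}$ gives a finite upper bound for $S$ and a finite lower bound for $T$). This is the only spot where (b) is used in an essential (non-pointwise) way, and it is the reason the argument cannot be localised to a single triple.
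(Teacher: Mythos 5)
Your proof is correct; the paper dismisses this lemma with ``Straightforward,'' and your argument is exactly the standard one it has in mind: (a)$\Rightarrow$(b) by comparing slopes of the supporting line, (b)$\Leftrightarrow$(c) since $z-x>0$, and (b)$\Rightarrow$(a) by choosing a slope $\alpha$ in the nonempty interval between the supremum of left difference quotients and the infimum of right difference quotients at $y$. The finiteness check for $S$ and $T$ is handled properly, so there is nothing to add.
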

\begin{proof}
Straightforward.
\end{proof}

\begin{lemma}
\label{lemma:convexity_of_a_function_in_terms_of_second_difference_quotient}
A function $f\colon\, \R \rightarrow \R$ is convex if and only if $\Phi_2 f \ge 0$.
Moreover, $f$ is strictly convex if and only if $\Phi_2 f >0$.
\end{lemma}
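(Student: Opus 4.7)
The plan is to reduce everything to the classical three-slope characterization of convexity. Recall that $f$ is convex iff for every $x<y<z$ the value $f(y)$ lies below the chord joining $(x,f(x))$ to $(z,f(z))$. Writing $y = \lambda x + (1-\lambda)z$ with $\lambda = (z-y)/(z-x)$, this chord inequality is equivalent, after clearing denominators and rearranging, to
\begin{equation*}
\frac{f(y)-f(x)}{y-x} \le \frac{f(z)-f(y)}{z-y}.
\end{equation*}
So first I would record this equivalence explicitly, essentially as a one-line algebraic manipulation.

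Next, I would observe that since $z - x > 0$, multiplying the displayed inequality by $1/(z-x)$ gives exactly the condition $\Phi_2 f(x,y,z) \ge 0$, by the very definition of the second difference quotient. Since $x<y<z$ were arbitrary, this yields $f$ convex $\iff$ $\Phi_2 f \ge 0$.

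For the strict statement, I would note that strict convexity means $f(y) < \lambda f(x) + (1-\lambda) f(z)$ for every $x<y<z$ with $y = \lambda x + (1-\lambda) z$, $\lambda \in (0,1)$. The same algebraic manipulation, now with strict inequality, gives $\Phi_2 f(x,y,z) > 0$ for all $x<y<z$, and conversely. Thus $f$ is strictly convex iff $\Phi_2 f > 0$.

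The proof is essentially a bookkeeping exercise, so there is no real obstacle; the only thing to be careful about is that the equivalence between the chord-inequality form of convexity and the monotonicity-of-slopes form is recorded correctly (including for strict convexity, where the strictness must be preserved under the algebraic rearrangement, which it is because $y-x, z-y, z-x$ are all strictly positive). No further auxiliary results are needed; the statement can simply be cited as an immediate consequence of the definitions together with Lemma~\ref{lemma:some_properties_second_difference_quotient}, or even directly referenced to Schikhof~\cite{Sc84} in the same manner as Lemma~\ref{lemma:some_properties_second_difference_quotient}.
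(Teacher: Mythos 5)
Your proposal is correct: the three-slope reduction is exactly the right computation, and the strictness is indeed preserved because $y-x$, $z-y$, $z-x$ are all strictly positive. The paper itself does not argue this at all --- its ``proof'' is a bare citation to van Rooij and Schikhof \cite[Theorem 2.2]{ScvRo82} --- so you have simply written out the standard argument that the reference contains; there is nothing to compare beyond noting that your version is self-contained where the paper's is not.
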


\begin{proof}
See Schikhof and van Rooij~\cite[Theorem 2.2]{ScvRo82}.
\end{proof}

\begin{lemma}
\label{lemma:lower_semicontinuous_infinite_at_infinity_bfb_fc_attains_its_global_minimum}
Let $f\colon\, \R \to \R$ be lower semicontinuous with $\lim_{|x|\to\infty} f(x) = \infty$. Suppose
that $f$ is bounded from below. Then there exists an $a\in \R$ for which $f(a) = \inf_{x\in \R} f(x)$.
In particular, $a$ is a supporting point for $f$.
\end{lemma}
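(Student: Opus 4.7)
The plan is to establish the two claims in succession. First, I would show that the infimum is attained by reducing to a compact minimization. Since $f$ is bounded from below, $m := \inf_{x\in\R} f(x) \in \R$. By coercivity, $\lim_{|x|\to\infty} f(x) = \infty$, there exists $R>0$ so large that $f(x) > m + 1$ for all $x$ with $|x| > R$. Hence $\inf_{x\in\R} f(x) = \inf_{x\in [-R,R]} f(x)$. On the compact set $[-R,R]$, lower semicontinuity of $f$ guarantees that the infimum is attained: take a minimizing sequence $(x_n)_{n\in\N}$ in $[-R,R]$, extract a convergent subsequence $x_{n_k}\to a\in [-R,R]$, and use $f(a)\le \liminf_{k\to\infty} f(x_{n_k}) = m$ together with $f(a)\ge m$ to conclude $f(a) = m$.

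Second, I would verify that any global minimizer $a$ is a supporting point. Define the constant affine function $l\colon\, \R\to\R$ by $l(x) = f(a)$ for all $x\in\R$. Then $l(a)=f(a)$, and since $a$ minimizes $f$, we have $l(x)=f(a)\le f(x)$ for every $x\in\R$. Thus $a$ is a supporting point in the sense of Definition~\ref{def:supporting_point}.

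There is no real obstacle here; both steps are standard. The only thing to be careful about is making the compactness reduction explicit (the coercivity hypothesis is essential, since without it a lower semicontinuous bounded-below function need not attain its infimum). The supporting point statement is immediate from the definition once the minimum exists, because the constant line through a global minimum lies below $f$ everywhere.
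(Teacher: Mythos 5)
Your proof is correct and rests on the same two ingredients as the paper's argument --- coercivity to reduce to a compact set and lower semicontinuity to guarantee attainment --- differing only cosmetically: the paper intersects the nested compact level sets $\{x \in \R\colon f(x) \le \inf f + \tfrac1n\}$, whereas you extract a convergent subsequence from a minimizing sequence in $[-R,R]$. You also spell out the supporting-point claim via the constant line $l(x)=f(a)$, which the paper leaves implicit as an immediate consequence.
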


\begin{proof}
Let $c= \inf_{x\in \R} f(x)$. Define $A_n=\{x\in \R\colon\, f(x) \le c+ \frac{1}{n}\}$, $n\in\N$.
Then $A_n$ is compact and $A_{n+1} \subset A_n$ for all $n\in\N$. Therefore there exists an
$a\in \R$ for which $a\in \bigcap_{n\in\N} A_n $.
\end{proof}

\begin{lemma}
\label{lemma:equivalences_having_minimising_hitting_line_for_lower_semic}
Let $f\colon\,\R \to [0,\infty)$ be lower semicontinuous with $\lim_{|x|\to\infty} f(x) = \infty$.
Then the following are equivalent:
\begin{enumerate}[label=\emph{(\alph*)}]
\item
There exists an $\alpha \in \R$ for which $x\mapsto f(x) - \alpha x$ has multiple global minimisers.
\item
There exists a linear  $l\colon\, \R \rightarrow \R$ for which $\#\{x\in \R\colon\,
l(x) = f(x) \} \ge 2$ and $l \le f$.
\item
There exist $a,b,c\in \R$ with $a<b<c$ and $\Phi_2 f(\cdot, a, \cdot) \ge 0$, $\Phi_2 f(\cdot, c, \cdot)
\ge 0$, $\Phi_2 f(\cdot, b, \cdot) \not > 0$.
\item
There exist $a,x,b,y,c\in \R$ with $a\le x<b<y\le c$ and $\Phi_2 f(\cdot, a, \cdot) \ge 0$,
$\Phi_2 f(\cdot, c, \cdot) \ge 0$, $\Phi_2 f(x, b, y) \le 0$.
\end{enumerate}
\end{lemma}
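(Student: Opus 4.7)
The plan is to prove the cycle $(a) \Leftrightarrow (b) \Rightarrow (c) \Rightarrow (d) \Rightarrow (b)$, which yields all four equivalences. The step $(a) \Leftrightarrow (b)$ is essentially a reformulation: two global minimisers $a_1 < a_2$ of $s \mapsto f(s) - \alpha s$ attaining the common value $\beta := \inf_s (f(s) - \alpha s)$ are precisely two touching points of the line $l(s) = \alpha s + \beta$, which automatically satisfies $l \le f$; the converse direction is symmetric. For $(b) \Rightarrow (c)$, given $l \le f$ with $l(a_1) = f(a_1)$ and $l(a_2) = f(a_2)$ for $a_1 < a_2$, set $a := a_1$, $c := a_2$; by Lemma \ref{lemma:some_properties_of_Phi_2}, both $\Phi_2 f(\cdot, a, \cdot) \ge 0$ and $\Phi_2 f(\cdot, c, \cdot) \ge 0$; for any $b \in (a, c)$ the chord from $(a, f(a))$ to $(c, f(c))$ coincides with $l$, so $l(b) \le f(b)$ is exactly $\Phi_2 f(a, b, c) \le 0$, whence $\Phi_2 f(\cdot, b, \cdot) \not > 0$.

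For $(c) \Rightarrow (d)$, extract from $\Phi_2 f(\cdot, b, \cdot) \not > 0$ a triple $\tilde x < b < \tilde y$ with $\Phi_2 f(\tilde x, b, \tilde y) \le 0$. If $\tilde x < a$, the identity
\begin{align*}
(\tilde y - \tilde x)\, \Phi_2 f(\tilde x, b, \tilde y) = (b - \tilde x)\, \Phi_2 f(\tilde x, a, b) + (\tilde y - a)\, \Phi_2 f(a, b, \tilde y)
\end{align*}
from Lemma \ref{lemma:some_properties_second_difference_quotient}(b), together with $\Phi_2 f(\tilde x, a, b) \ge 0$ (which follows from $a$ being a supporting point), forces $\Phi_2 f(a, b, \tilde y) \le 0$, so we may replace $\tilde x$ by $a$. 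The analogous manipulation on the right, using $\Phi_2 f(b, c, \tilde y) \ge 0$, allows replacing $\tilde y$ by $c$ whenever $\tilde y > c$, yielding the tuple required by (d).

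The heart of the argument is $(d) \Rightarrow (b)$, which I will prove by contradiction via a monotone-selection argument. Assume (b) fails, so every $\alpha \in \R$ admits at most one global minimiser of $s \mapsto f(s) - \alpha s$; write $M(\alpha)$ for this unique minimiser whenever it exists. The domain of $M$ contains all supporting slopes of $f$, in particular some $\alpha_a$ at $a$ and some $\alpha_c$ at $c$ with $\alpha_a \le (f(c) - f(a))/(c - a) \le \alpha_c$. A standard exchange argument shows $M$ is non-decreasing on its domain. If $M$ had a jump at some $\alpha^* \in [\alpha_a, \alpha_c]$, then lower semicontinuity of $f$ combined with continuity of the concave map $\alpha \mapsto \inf_s (f(s) - \alpha s)$ would force both one-sided limits of $M$ at $\alpha^*$ to be global minimisers of $s \mapsto f(s) - \alpha^* s$, contradicting uniqueness. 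Hence $M$ is continuous on $[\alpha_a, \alpha_c]$, and the intermediate value theorem gives $[a, c] \subseteq M([\alpha_a, \alpha_c])$: every point of $[a, c]$ is a supporting point of $f$. By Lemma \ref{lemma:some_properties_of_Phi_2}, $\Phi_2 f(\cdot, s, \cdot) \ge 0$ for all $s \in [a, c]$, hence $f$ is convex on $[a, c]$. Combined with $\Phi_2 f(x, b, y) \le 0$ from (d) this forces $\Phi_2 f(x, b, y) = 0$, so $(x, f(x))$, $(b, f(b))$, $(y, f(y))$ are collinear; a short convex-function computation (taking another point $s \in (x, b)$ and applying convexity on $[s, y]$) promotes this collinearity to $f$ being affine on the whole interval $[x, y]$. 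The resulting line $\ell$ is $\le f$ on $[a, c]$ by convexity there, and $\le f$ outside $[a, c]$ because its slope is sandwiched between the extremal supporting slopes at $a$ and $c$. Hence $\ell$ witnesses (b), contradicting our assumption.

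The main obstacle is $(d) \Rightarrow (b)$: both the continuity of the selection $M$ and the promotion of the local identity $\Phi_2 f(x, b, y) = 0$ to a globally valid supporting line require genuine convex-analytic input — lower semicontinuity of $f$ for the former, and the interplay between convexity on $[a, c]$ and the extremal supporting slopes at $a$ and $c$ for the latter. An equivalent and perhaps cleaner route would work directly with the convex envelope $\hat f$ of $f$: in the case $\hat f(b) < f(b)$ one represents $\hat f(b)$ as $\lambda f(p) + (1-\lambda) f(q)$ at two supporting points $p < b < q$ (which exist by compactness from $\lim_{|s| \to \infty} f(s) = \infty$ and lower semicontinuity), and verifies that the chord through $(p, f(p))$ and $(q, f(q))$ stays below $f$ on all of $\R$; the boundary case $\hat f(b) = f(b)$ reduces to the observation that $\Phi_2 f(x, b, y) = 0$ forces the supporting line at $b$ to coincide with the chord through $(x, f(x))$ and $(y, f(y))$.
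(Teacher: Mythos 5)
Your proof is correct, and it agrees with the paper on everything except the key implication (d)$\Rightarrow$(b): the steps (a)$\Leftrightarrow$(b), (b)$\Rightarrow$(c) and (c)$\Rightarrow$(d) are handled essentially as in the paper (which also notes that (d)$\Rightarrow$(c) is trivial, so only one of the two long implications is needed). For (d)$\Rightarrow$(b) the paper argues directly and constructively: it sets $w=\sup\{s\le b\colon \Phi_2 f(\cdot,s,\cdot)\ge 0\}$ and $z=\inf\{s\ge b\colon \Phi_2 f(\cdot,s,\cdot)\ge 0\}$, uses lower semicontinuity of $f$ to show that $w$ and $z$ are themselves supporting points, and then exhibits the witnessing line explicitly (the chord through $(x,f(x))$ and $(y,f(y))$ if $w=b=z$, and otherwise the chord through $(w,f(w))$ and $(z,f(z))$, where Lemma~\ref{lemma:lower_semicontinuous_infinite_at_infinity_bfb_fc_attains_its_global_minimum} applied to $f-l$ on $[w,z]$ shows that the minimum of $f-l$ is attained at $w$ or $z$, whence $l\le f$ everywhere). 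You instead argue by contradiction through a monotone selection $M(\alpha)$ of the assumed-unique minimisers, ruling out jumps by combining upper semicontinuity of the concave map $\alpha\mapsto\inf_s(f(s)-\alpha s)$ (an infimum of affine functions) with lower semicontinuity of $f$, and then using the intermediate value theorem to conclude that every point of $[a,c]$ is a supporting point, so that $f$ is convex on $[a,c]$ and $\Phi_2 f(x,b,y)=0$ forces an affine piece. Both routes use lower semicontinuity in an essential way; the paper's is shorter and produces the line directly, while yours makes the convex-envelope/Legendre picture visible and explains \emph{why} non-uniqueness must occur (a jump or a flat piece of the selection).

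Two spots are stated more loosely than they should be, though both are routine to repair. First, for the intermediate value step you need $M$ to be defined on \emph{all} of $[\alpha_a,\alpha_c]$, not just at supporting slopes a priori; this requires observing that for $\alpha\in(\alpha_a,\alpha_c)$ the function $s\mapsto f(s)-\alpha s$ is coercive (it dominates both $\beta_a+(\alpha_a-\alpha)s$ and $\beta_c+(\alpha_c-\alpha)s$, where $\beta_a,\beta_c$ are the intercepts of the supporting lines at $a,c$), so that Lemma~\ref{lemma:lower_semicontinuous_infinite_at_infinity_bfb_fc_attains_its_global_minimum} applies; one should also note that $\alpha_a<\alpha_c$, since $\alpha_a=\alpha_c$ would already give two minimisers. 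Second, the claim that $\ell\le f$ outside $[a,c]$ ``because its slope is sandwiched'' deserves the explicit chord comparison: for $s<a$, Lemma~\ref{lemma:some_properties_of_Phi_2} at the supporting point $a$ gives $\frac{f(a)-f(s)}{a-s}\le\frac{f(y)-f(a)}{y-a}\le\sigma$ (the last inequality because $\ell(a)\le f(a)$ and $\ell(y)=f(y)$), hence $f(s)\ge f(a)+\sigma(s-a)\ge\ell(s)$, and symmetrically for $s>c$. Neither point is a genuine gap.
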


\begin{proof}
The equivalence (a) $\iff$ (b) and the implication (d) $\Rightarrow$ (c) are trivial. \\
(c) $\Rightarrow$ (d). Assume (c). Then there exist $x,y\in \R$ with $x<b<y$ for which $\Phi_2 f(x,b,y)\le 0$.
If $x<a$ and/or $y>c$, then $\Phi_2 f(a,b,y) \le 0$ and/or $\Phi_2(x,b,c)\le 0$ by  Lemma \ref{lemma:some_properties_second_difference_quotient}(b). Therefore we may assume that $x\ge a$ and 
$y \le c$, i.e., we obtain (d). \\
(b) $\Rightarrow$ (c). Assume (b). Let $a,c \in \{x\in \R\colon\, l(x) =f(x)\}$ with $a<c$. Let $b\in (a,c)$. Then
\begin{align}
\frac{f(c) - f(a)}{c-a} =  \frac{l(c) - l(a)}{c-a} =  \frac{l(b) - l(a)}{b-a}  \le \frac{f(b)-f(a)}{b-a},
\end{align}
i.e., $\Phi_2f(a,b,c) \le 0$. \\
(d) $\Rightarrow $ (b). Define $w,z\in \R$ by
\begin{align}
w& = \sup\{s\le b\colon\, \Phi_2 f( \cdot, s, \cdot) \ge 0\}, \\ \nonumber
z& = \inf \{s \ge b\colon\, \Phi_2 f( \cdot, s, \cdot) \ge 0\}.
\end{align}
Because $f$ is lower semicontinuous, we have $\liminf_{s\uparrow w} f(s)\ge f(w)$. Hence, by 
Lemma~\ref{lemma:some_properties_second_difference_quotient}(a), we have, for $q,r\in \R$ with $q<w<r$,
\begin{align}
0 &\le \limsup_{s\uparrow w} \Phi_2 f ( q,s,r) \\ \nonumber
& = \frac{f(q)}{(q-w)(q-r)}+ \frac{f(r)}{(r-w)(r-q)} - \frac{\liminf_{s\uparrow w} f(s)}{(r-w)(w-q)}
 \le \Phi_2 f(q,w,r).
\end{align}
So $\Phi_2 f(\cdot,w,\cdot)\ge 0$. Similarly $\Phi_2 f(\cdot, z, \cdot) \ge 0 $. If $w=b$, then $z=b$, 
and vice versa. \\
$\bullet$ Assume that $w=b=z$. Then $f$ is convex and $\Phi_2 f (x,b,y) =0$. With
$l\colon\, \R \rightarrow \R$, $s\mapsto f(x) + \frac{f(y)-f(x)}{y-x}(s-x)$ one then has $l\le f$
and $l(s)=f(s)$ for all $s\in [x,y]$, since
\begin{align}
\frac{f(b)-f(x)}{b-x} \le \frac{f(s) - f(b)}{s-b} \le \frac{f(y) -f(b)}{y-b}= \frac{f(b)-f(x)}{b-x} .
\end{align}
$\bullet$ Assume that $w<b<z$. Define $l\colon\, \R \rightarrow \R$, $s \mapsto f(w) + \frac{f(z)-f(w)}{z-w}(s-w)$.
Then $l \le f$ on $(w,z)^c$. Note that $f-l|_{[w,z]}$ is lower semicontinuous and bounded from below.
By Lemma \ref{lemma:lower_semicontinuous_infinite_at_infinity_bfb_fc_attains_its_global_minimum},
it attains its infimum at some $a\in [w,z]$. This $a$ is a supporting point of $f$, and hence $a=w$ or 
$a=z$ by Lemma \ref{lemma:some_properties_of_Phi_2}. Thus $l(s) \le f(s)$ for all $s\in \R$.
\end{proof}

\begin{lemma}
\label{lemma:adding_squares_to_a_lsc_function_assures_enough_supporting_points}
Let $f\colon\, \R \to[0,\infty)$ be lower semicontinuous. Let $r\in \R$ and $\beta>0$.
Then there exist $q,s\in \R$ with $q<r<s$ that are supporting points of $x\mapsto f(x)
+\beta x^2$, i.e., $\Phi_2 f (\cdot, q,\cdot) \ge - \beta$, $\Phi_2 f (\cdot,s,\cdot) \ge -\beta$.
\end{lemma}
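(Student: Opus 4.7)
The plan is to reduce the problem to the standard duality between global minimizers of tilted lsc functions and supporting points, applied to $g(x) = f(x) + \beta x^2$. Since $f$ is lsc and $f \ge 0$, $g$ is lsc, nonnegative, and satisfies $\lim_{|x|\to\infty} g(x) = \infty$ because $\beta > 0$. For any slope $M \in \R$, the tilted function $x \mapsto g(x) - Mx$ inherits the same three properties, so by Lemma~\ref{lemma:lower_semicontinuous_infinite_at_infinity_bfb_fc_attains_its_global_minimum} it attains its global infimum at some point $x_M \in \R$. By the defining property of a minimizer, the line $\ell_M(y) = g(x_M) + M(y - x_M)$ satisfies $\ell_M(x_M) = g(x_M)$ and $\ell_M \le g$, so $x_M$ is a supporting point for $g$.

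The key step is to show that $x_M$ can be pushed strictly to the right of $r$ by choosing $M$ large positive (and, symmetrically, strictly to the left by choosing $M$ large negative). Comparing the minimum value $g(x_M) - Mx_M$ with the value at the test point $r+1$ gives
\begin{align*}
M(r+1-x_M) \;\le\; g(r+1) - g(x_M) \;\le\; g(r+1),
\end{align*}
using $g(x_M) \ge 0$. If $x_M \le r$, then $r+1 - x_M \ge 1$, which forces $M \le g(r+1) = f(r+1) + \beta(r+1)^2$. Hence for any $M$ strictly greater than this bound we must have $x_M > r$; set $s := x_M$. The symmetric argument (comparing with the test point $r-1$ at slope $-M$) produces $q := x_{-M'} < r$ for $M'$ sufficiently large.

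It remains to translate the supporting-point property of $s$ and $q$ for $g$ into the desired inequalities on $\Phi_2 f$. By Lemma~\ref{lemma:some_properties_of_Phi_2}, $s$ and $q$ being supporting points of $g$ is equivalent to $\Phi_2 g(\cdot, s, \cdot) \ge 0$ and $\Phi_2 g(\cdot, q, \cdot) \ge 0$. Now Lemma~\ref{lemma:some_properties_second_difference_quotient}(c)--(d) gives the identity $\Phi_2 g = \Phi_2 f + \beta \, \Phi_2(x \mapsto x^2) = \Phi_2 f + \beta$, and subtracting $\beta$ yields $\Phi_2 f(\cdot, s, \cdot) \ge -\beta$ and $\Phi_2 f(\cdot, q, \cdot) \ge -\beta$, as required. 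The only substantive point in the argument is the quantitative tail estimate needed to guarantee $x_M$ lies strictly on the correct side of $r$; everything else follows by direct invocation of the preceding lemmas, so no serious obstacle is expected.
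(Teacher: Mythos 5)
Your proof is correct and takes essentially the same approach as the paper's: both tilt $g=f+\beta x^2$ by a steep linear function, invoke Lemma~\ref{lemma:lower_semicontinuous_infinite_at_infinity_bfb_fc_attains_its_global_minimum} to obtain a global minimiser (hence a supporting point), push that minimiser strictly past $r$ by taking the slope large, and then translate back via $\Phi_2 g=\Phi_2 f+\beta$. The only difference is in how the minimiser is localised --- the paper intersects the tilting line with the parabola $\beta x^2$ and uses $f\ge 0$, whereas you compare values directly at the test point $r+1$ --- and your version of that step is, if anything, cleaner.
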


\begin{proof}
Since $x\mapsto f(x) + \beta x^2$ is lower semicontinuous and $\lim_{|x|\to\infty}
[f(x) + \beta x^2] = \infty$, by 
Lemma \ref{lemma:lower_semicontinuous_infinite_at_infinity_bfb_fc_attains_its_global_minimum}
there exists an $a\in \R$ for which $a$ is a global minimum and thus a supporting point
for $x\mapsto f(x) + \beta x^2$. There exists a (large enough) $\theta>0$ such that
\begin{align}
\label{eqn:theorem_finding_supporting_point_set}
\{x\in \R\colon\, f(a) -1 + \theta (x-r) = \beta x^2 \}
\end{align}
has two elements, say $x_1,x_2$ with $x_1<x_2$. By the definition of $a$, we have $x_1>r$.
By Lemma~\ref{lemma:lower_semicontinuous_infinite_at_infinity_bfb_fc_attains_its_global_minimum},
there exists an $s\in \R$ that is a global minimum and a supporting point of
\begin{align}
\label{eqn:theorem_finding_supporting_point_function_minus_line}
x\mapsto f(x) + \beta x^2 - (f(a) -1 + \theta (x-r)).
\end{align}
Hence $s$ is also a supporting point of $x\mapsto f(x) + \beta x^2$. Because
\eqref{eqn:theorem_finding_supporting_point_function_minus_line} is strictly negative
on $(x_1,x_2)$ and non-negative on $[x_1,x_2]^c$, we have $s\in [x_1,x_2]$. Therefore $s>r$.
There also exists a (small enough) $\theta<0$ for which \eqref{eqn:theorem_finding_supporting_point_set}
has two elements. In the same way we can prove that there is an $q<r$ that is also a supporting
point of $x\mapsto f(x) + \beta x^2$. The last part of the statement is a consequence of 
Lemma~\ref{lemma:some_properties_second_difference_quotient}.
\end{proof}

\begin{lemma}
\label{lemma:two_minimisers_f_plus_beta_polynomial}
Let $f\colon\,\R \to [0,\infty)$ be lower semicontinuous and let $\beta\in (0,\infty)$.
Then there exists an $\alpha \in \R$ for which $x \mapsto f(x) + \beta  x^ 2 - \alpha x $
has multiple global minimisers if and only if $\Phi_2 f \not > - \beta$.
\end{lemma}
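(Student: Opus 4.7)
The plan is to set $g(x) := f(x) + \beta x^2$ and reduce the claim to Lemma \ref{lemma:equivalences_having_minimising_hitting_line_for_lower_semic} applied to $g$. First I would observe, using Lemma \ref{lemma:some_properties_second_difference_quotient}(c)(d), that $\Phi_2 g = \Phi_2 f + \beta$, so that $\Phi_2 f \not > -\beta$ is equivalent to $\Phi_2 g \not > 0$, and a point $y \in \R$ satisfies $\Phi_2 f(\cdot, y, \cdot) \geq -\beta$ precisely when it is a supporting point of $g$ (via Lemma \ref{lemma:some_properties_of_Phi_2}). I would also check the hypotheses of Lemma \ref{lemma:equivalences_having_minimising_hitting_line_for_lower_semic} for $g$: it is lower semicontinuous and nonnegative, and $g(x) \geq \beta x^2$ forces $\lim_{|x|\to\infty} g(x) = \infty$.

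For the only-if direction, suppose that for some $\alpha \in \R$ the map $x \mapsto f(x) + \beta x^2 - \alpha x = g(x) - \alpha x$ has multiple global minimisers. I would invoke the implication (a)$\Rightarrow$(c) of Lemma \ref{lemma:equivalences_having_minimising_hitting_line_for_lower_semic} applied to $g$ to obtain some $b \in \R$ with $\Phi_2 g(\cdot, b, \cdot) \not > 0$, and from this extract $x_0 < b < y_0$ with $\Phi_2 g(x_0, b, y_0) \leq 0$. Subtracting $\beta$ yields $\Phi_2 f(x_0, b, y_0) \leq -\beta$, hence $\Phi_2 f \not > -\beta$.

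For the if direction, assume $\Phi_2 f \not > -\beta$ and pick $u < v < w$ with $\Phi_2 f(u, v, w) \leq -\beta$, equivalently $\Phi_2 g(u, v, w) \leq 0$. I would then apply Lemma \ref{lemma:adding_squares_to_a_lsc_function_assures_enough_supporting_points} (to $f$, with the given $\beta$) twice: once at $r = u$ to produce a supporting point $q$ of $g$ with $q < u$, and once at $r = w$ to produce a supporting point $s$ of $g$ with $s > w$. The quintuple $q \leq u < v < w \leq s$ then meets condition (d) of Lemma \ref{lemma:equivalences_having_minimising_hitting_line_for_lower_semic} for $g$, and (d)$\Rightarrow$(a) delivers an $\alpha \in \R$ for which $x \mapsto g(x) - \alpha x = f(x) + \beta x^2 - \alpha x$ has multiple global minimisers.

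There is no genuine obstacle; the proof is an essentially mechanical repackaging of Lemmas \ref{lemma:some_properties_second_difference_quotient}--\ref{lemma:adding_squares_to_a_lsc_function_assures_enough_supporting_points} via the shift $g(x) = f(x) + \beta x^2$. The only item one must track is that the two supporting points of $g$ in the if direction straddle the interval $[u, w]$, so as to legitimately feed condition (d); this is exactly what the strict inequalities $q < r < s$ in Lemma \ref{lemma:adding_squares_to_a_lsc_function_assures_enough_supporting_points} guarantee when it is applied at the endpoints $r = u$ and $r = w$ separately.
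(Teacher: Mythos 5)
Your proof is correct and follows exactly the route the paper intends: its own proof of this lemma is the one-line remark that it is "a consequence of Lemmas \ref{lemma:equivalences_having_minimising_hitting_line_for_lower_semic}--\ref{lemma:adding_squares_to_a_lsc_function_assures_enough_supporting_points}," and your write-up is a faithful, correctly detailed expansion of that reduction via $g(x)=f(x)+\beta x^2$.
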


\begin{proof}
This is a consequence of 
Lemmas~\ref{lemma:equivalences_having_minimising_hitting_line_for_lower_semic}--\ref{lemma:adding_squares_to_a_lsc_function_assures_enough_supporting_points}.
\end{proof}

\begin{proof}[Proof of Theorem \ref{theorem:rate_function_has_multiple_global_minima_expressed_in_second_differential_quotient}]
The claim in
Theorem~\ref{theorem:rate_function_has_multiple_global_minima_expressed_in_second_differential_quotient}
follows by applying Lemma~\ref{lemma:two_minimisers_f_plus_beta_polynomial} with $\beta= \frac{1+t}{2t}$ to 
the lower semicontinuous function $r\mapsto V(r) + \frac{1}{2}r^2$.
\end{proof}

The following observation proves the claim made below Corollary~\ref{corollary:overview_Gibbs_non_Gibbs}.

\begin{lemma}
\label{lemma:connection_between_second_derivative_and_quotient}
Let $f\colon\,\R \to \R$ be twice differentiable.  Then $f'' \ge 2\beta$ if and only if $\Phi_2 f \ge \beta$ for
all $\beta\in \R$.
\end{lemma}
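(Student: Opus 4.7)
The plan is to reduce the claim to the already-established connection between convexity and the sign of the second difference quotient (Lemma~\ref{lemma:convexity_of_a_function_in_terms_of_second_difference_quotient}) by subtracting a quadratic. Define $g\colon\,\R\to\R$ by $g(x)=f(x)-\beta x^2$. By Lemma~\ref{lemma:some_properties_second_difference_quotient}(c)--(d), the second difference quotient is linear and sends $x\mapsto x^2$ to the constant $1$, so
\begin{align*}
\Phi_2 g = \Phi_2 f - \beta \,\Phi_2(x\mapsto x^2) = \Phi_2 f - \beta.
\end{align*}
In particular, $\Phi_2 f \geq \beta$ on its whole domain if and only if $\Phi_2 g \geq 0$.

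Since $f$ is twice differentiable, so is $g$, with $g''(x) = f''(x)-2\beta$. Thus $f''\ge 2\beta$ on $\R$ is equivalent to $g''\ge 0$ on $\R$.

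It remains to connect $\Phi_2 g \ge 0$ and $g'' \ge 0$. By Lemma~\ref{lemma:convexity_of_a_function_in_terms_of_second_difference_quotient}, $\Phi_2 g \ge 0$ is equivalent to convexity of $g$. For a twice differentiable $g$, convexity is equivalent to $g''\ge 0$: one direction is the standard fact that $g''\ge 0$ implies convexity (which in turn, via Lemma~\ref{lemma:convexity_of_a_function_in_terms_of_second_difference_quotient}, gives $\Phi_2 g \ge 0$), and the converse follows because for a convex twice differentiable function, $g''(y) = \lim_{h\downarrow 0}\frac{g(y+h)-2g(y)+g(y-h)}{h^2} = \lim_{h\downarrow 0} 2\,\Phi_2 g(y-h,y,y+h) \ge 0$. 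Chaining the three equivalences yields $f''\ge 2\beta \iff g''\ge 0 \iff \Phi_2 g \ge 0 \iff \Phi_2 f \ge \beta$.

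The only subtle point is the last limiting identity used to pass from convexity of $g$ back to $g''\ge 0$ under mere twice-differentiability (rather than $C^2$); however, this is immediate from the Taylor expansion $g(y\pm h) = g(y) \pm g'(y)h + \tfrac{1}{2}g''(y)h^2 + o(h^2)$, which holds pointwise whenever $g''(y)$ exists. No deeper obstacle arises, and the statement follows.
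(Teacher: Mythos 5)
Your proof is correct and follows essentially the same route as the paper's: subtract $\beta x^2$, use the linearity of $\Phi_2$ together with $\Phi_2(x\mapsto x^2)=1$, and invoke the equivalence between convexity and $\Phi_2\ge 0$ plus the standard equivalence between convexity and $g''\ge 0$ for twice differentiable functions. The only difference is that you spell out the converse direction (convex $\Rightarrow g''\ge 0$) via the symmetric second difference quotient and the pointwise Taylor expansion, which the paper simply cites as a standard fact.
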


\begin{proof}
By Lemma~\ref{lemma:convexity_of_a_function_in_terms_of_second_difference_quotient},
$\Phi_2 g \ge 0$ if and only if $g$ is convex. Since a twice differentiable function $g$ is convex if and only 
if $g'' \ge 0$, this implies the equivalence $\Phi_2 f \ge 0 \iff f''\ge 0$. Let $\beta \in \R$ and let $g\colon\,
\R \to \R$ be given by $g(r) = f(r) - \beta r^2$. Then, by
Lemma~\ref{lemma:some_properties_second_difference_quotient}, we have $f'' \ge 2\beta$ $\iff$ $g'' \ge 0$ 
$\iff$ $\Phi_2 g \ge 0$ $\iff$ $\Phi_2 f \ge \beta$.
\end{proof}

In contrast to Lemma \ref{lemma:connection_between_second_derivative_and_quotient}, we can have 
$\Phi_2 f > \beta$ but not $f'' > 2\beta$ (take e.g.\ $\beta =0$ and $f(x) = x^4$, in which case $\Phi_2 f >0$ 
by Lemma \ref{lemma:convexity_of_a_function_in_terms_of_second_difference_quotient} but $f''(0)=0$).
However, according to the next observation the second derivative of $f$ can be used to determine whether 
$\Phi_2 f> \beta$. This observation can be used to determine whether $(\mu_{n,t})_{n\in\N}$ is sequentially 
Gibbs at $t= t_c$.

\begin{lemma}
\label{lemma:strict_inequalities_for_the_second_difference_quotient}
Let $f\colon\, \R \rightarrow \R$. Let $a,b,c\in \R$ with $a<b<c$, and $\beta \in \R$.
\begin{enumerate}[label=\emph{(\alph*)}]
\item
If $\Phi_2 f|_{(a,b)}>\beta$, $\Phi_2 f|_{(a,b]}\ge\beta$, $\Phi_2 f|_{(b,c)}>\beta$, $\Phi_2 f|_{[b,c)}
\ge \beta$ and $\Phi_2 f|_{(a,c)}(\cdot,b,\cdot)\ge 0$, then $\Phi_2 f|_{(a,c)}>\beta$.
\item
If $f$ is upper semicontinuous and $\Phi_2 f|_{(a,b)}\ge\beta$, then $\Phi_2 f|_{[a,b]}\ge\beta$.
\item
If $f$ is twice differentiable on $(a,b)$ and ${f|_{(a,b)}}''>\beta$, then $\Phi_2 f|_{(a,b)}>\beta$.
\end{enumerate}
\end{lemma}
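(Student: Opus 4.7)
The plan is to prove the three parts in the order (c), (b), (a), since they increase in difficulty.

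For (c), the idea is to reduce to strict convexity. Setting $g(r)=f(r)-\beta r^2$, the hypothesis ${f|_{(a,b)}}''>\beta$ gives ${g|_{(a,b)}}''>0$, so $g$ is strictly convex on $(a,b)$. By Lemma~\ref{lemma:convexity_of_a_function_in_terms_of_second_difference_quotient} this yields $\Phi_2 g|_{(a,b)}>0$, and the linearity of $\Phi_2$ from Lemma~\ref{lemma:some_properties_second_difference_quotient}(c,d) (so that $\Phi_2 f=\Phi_2 g+\beta$) finishes the argument.

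For (b), the plan is a direct approximation argument. Given a triple $(x,y,z)$ with $a\le x<y<z\le b$, choose sequences $x_n\downarrow x$ and $z_n\uparrow z$ with $x_n,z_n\in(a,b)$ and $x_n<y<z_n$. The hypothesis gives $\Phi_2 f(x_n,y,z_n)\ge\beta$. Upper semicontinuity of $f$ at the boundary points yields $\limsup_n f(x_n)\le f(x)$ (when $x=a$) and $\limsup_n f(z_n)\le f(z)$ (when $z=b$). Using the explicit three-point formula
\begin{align*}
\Phi_2 f(x_n,y,z_n)=\tfrac{1}{z_n-x_n}\Big[\tfrac{f(z_n)-f(y)}{z_n-y}-\tfrac{f(y)-f(x_n)}{y-x_n}\Big],
\end{align*}
passing to the $\limsup$ gives $\Phi_2 f(x,y,z)\ge\limsup_n\Phi_2 f(x_n,y,z_n)\ge\beta$.

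For (a), the plan is a case analysis on the position of the triple $(x,y,z)\subset(a,c)$ relative to $b$, combining strict with non-strict inequalities by means of the divided-difference decomposition in Lemma~\ref{lemma:some_properties_second_difference_quotient}(b). If the triple lies entirely in $(a,b)$ or $(b,c)$, the hypothesis gives the strict bound directly. If one endpoint equals $b$, say $(x,y,b)$ with $x,y\in(a,b)$, insert an auxiliary $y'\in(y,b)$ and apply the identity
\begin{align*}
(b-x)\,\Phi_2 f(x,y,b)=(y'-x)\,\Phi_2 f(x,y,y')+(b-y')\,\Phi_2 f(y,y',b),
\end{align*}
which is a convex combination (coefficients sum to $b-x$) of a strictly-$>\beta$ term (triple in $(a,b)$) and a $\ge\beta$ term (triple in $(a,b]$), hence is $>\beta$; the case $(b,y,z)$ with $y,z\in(b,c)$ is symmetric. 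When $y=b$, the hypothesis $\Phi_2 f(\cdot,b,\cdot)\ge 0$ handles it (in the intended regime $\beta\le 0$). Finally, for triples straddling $b$ with $y\ne b$, for instance $x<y<b<z$, decompose once more via
\begin{align*}
(z-x)\,\Phi_2 f(x,y,z)=(b-x)\,\Phi_2 f(x,y,b)+(z-b)\,\Phi_2 f(y,b,z),
\end{align*}
where the first summand is $>\beta$ by the previous step and the second is $\ge 0\ge\beta$; the analogous identity treats $x<b<y<z$.

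The main obstacle will be organising the case analysis in (a) and checking that in each decomposition the coefficients really sum to the correct total, so that a convex combination of strict and non-strict bounds produces a genuine strict bound. A small subtlety is that the hypothesis $\Phi_2 f(\cdot,b,\cdot)\ge 0$ is phrased with $0$ rather than $\beta$; this is tailored to the implicit regime $\beta\le 0$ (in the paper $\beta=-M$ with $M>0$), where $0\ge\beta$ upgrades the hypothesis into the required $\ge\beta$ bound.
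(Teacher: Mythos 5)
Your overall strategy coincides with the paper's: (c) is reduced to strict convexity via a quadratic shift, (b) is the three-point formula of Lemma~\ref{lemma:some_properties_second_difference_quotient}(a) combined with upper semicontinuity at the two endpoints, and (a) is a case analysis driven by the decomposition identities of Lemma~\ref{lemma:some_properties_second_difference_quotient}(b). Your corrected coefficients in those identities --- $(c-a)$ and $(d-c)$ for $\Phi_2 f(a,b,d)$, resp.\ $(b-a)$ and $(d-b)$ for $\Phi_2 f(a,c,d)$, which do sum to $d-a$ --- are the right ones (the coefficients as printed in the paper's Lemma~\ref{lemma:some_properties_second_difference_quotient}(b) are swapped between the two identities, as the test $f(x)=x^2$ shows). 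Parts (b) and (c) and the endpoint/straddling cases of (a) are fine, with the caveat that in (c) the step ``$f''>\beta$ gives $g''>0$'' for $g=f-\beta r^2$ rests on $g''=f''-2\beta>-\beta$, which is $\geq 0$ only in the regime $\beta\le 0$; the paper's own proof (with $g=f-\tfrac{\beta}{2}r^2$) has the same restriction, and the hypothesis consistent with Lemma~\ref{lemma:connection_between_second_derivative_and_quotient} would be $f''>2\beta$.

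The genuine gap is the case $y=b$ in part (a). There you invoke only $\Phi_2 f(\cdot,b,\cdot)\ge 0$, which gives $\Phi_2 f(x,b,z)\ge 0$; this is strictly greater than $\beta$ when $\beta<0$ but not when $\beta=0$, and $\beta=0$ is both within the scope of the lemma and the case actually needed in Example~\ref{example:poleven} when $\inf V''=0$ (one must conclude $\Phi_2 V>0$, i.e., strict convexity, including on triples whose middle point is the degenerate point). The paper closes this case with one more application of the decomposition you already use: insert an auxiliary $y'\in(x,b)$ (the paper takes $y'=x/2$ after normalising $b=0$) and write $(z-x)\,\Phi_2 f(x,b,z)=(y'-x)\,\Phi_2 f(x,y',b)+(z-y')\,\Phi_2 f(y',b,z)$. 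The first triple has middle point $y'\in(a,b)$ and one endpoint at $b$, so it is $>\beta$ by your case with an endpoint equal to $b$, while the second has middle point $b$ and is $\ge 0\ge\beta$; the strictly larger term, carrying a positive coefficient, then forces $\Phi_2 f(x,b,z)>\beta$ even for $\beta=0$. With that one extra line your case analysis is complete.
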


\begin{proof}
Without loss of generality we may assume $b=0$. \\
(a) Let $x,y,z\in (a,c)$. If $x<y<0<z$ or $x<0<y<z$, then with Lemma \ref{lemma:some_properties_second_difference_quotient}(b) we easily get $\Phi_2 f(x,y,z)>\beta$.
If $y=0$, then $x<\frac{x}{2}<0<z$, and hence $\Phi_2 (x,\frac{x}{2},0)>0$. Again with Lemma \ref{lemma:some_properties_second_difference_quotient}(b), we get $\Phi_2 (x,0,z)>0$. \\
(b) If $f$ is upper semicontinuous, then $\limsup_{s\uparrow b} f(s) \le f(b)$ and $\limsup_{s\downarrow a}
f(s) \le f(a)$. Together with Lemma \ref{lemma:some_properties_second_difference_quotient}(a) this 
proves the second statement. \\
(c) If ${f|_{(a,b)}}''>0$, then $f$ is strictly convex, and with Lemma \ref{lemma:convexity_of_a_function_in_terms_of_second_difference_quotient} this implies (c) in
case $\beta =0$. Replacing $f$ by $g(r) = f(r) - \frac{\beta}{2} r^2$, we obtain (c) for $\beta \ne 0$
(see Lemma~\ref{lemma:some_properties_second_difference_quotient}).
\end{proof}


\appendix

\setcounter{note}{0}
\renewcommand{\thenote}{\Alph{section}\arabic{note}}


\section{Key formulas}
\label{appendix}

In this appendix we derive a few formulas that were used in the main body of the paper.

\begin{note}
We derive formulas for $\gamma_{n,t}$ and $\overline \gamma_{n,t}$ described in Section 1.4. \\
Inserting \eqref{eqn:formula_for_P_n} into \eqref{eqn:formula_for_mu_n,t} we get, for $A\in \cB(\R^n)$,
\begin{align}
\label{eqn:formula_for_mu_n,t_rewritten}
 \mu_{n,t}(A)
& =  \frac{1}{Z_n} \int_{\R^n}
\Big[ (2\pi t)^{- \frac{n}{2}} \int_{\R^n} \1_A(y) e^{-\frac{\|y-z\|^2}{2t}}  \D y \Big]
e^{-n (V\,\circ\,m_n) (z)} \D\mu_{\cN(0,I_n)}(z)\\
\notag
& =  \frac{1}{Z_n} \int_{\R^n} \1_A(y) \Big[ (2\pi)^{-n}  t^{- \frac{n}{2}} \int_{\R^n}
e^{-\frac{\|y-z\|^2}{2t}} e^{-\frac{\|z\|^2}{2}}  e^{-n (V\,\circ\,m_n) (z)}  \D z \Big] \D y.
\end{align}
Since $\frac{\|y-z\|^2}{2t}+\frac{\|z\|^2}{2} = \frac{\|y\|^2}{2(1+t)} +\frac{\|\frac{y}{1+t} -z\|^2 (1+t)}{2t}$
for $y,z\in \R^n$, we get, for $A\in \cB(\R^n)$,
\begin{flalign}
\label{eqn:formular_for_mu_n,t_rewritten2}
\mu_{n,t}(A)
=  \frac{1}{Z_n} \int_{\R^n} \1_A(y) \Big[ (2\pi)^{-n}  t^{- \frac{n}{2}} \int_{\R^n}
e^{-\frac{\|y\|^2}{2(1+t)}} e^{-\frac{\|\frac{y}{1+t} -z\|^2 (1+t)}{2t}}  e^{-n (V\,\circ\,m_n) (z)}  \D z \Big] \D y.
\end{flalign}
Then it is not hard to check that $\gamma_{n,t}: \R^{n-1} \times \cB(\R)$
defined for $y_2,\dots,y_n\in \R$ and $B\in \cB(\R)$ by
\begin{align}
\label{eqn:formula_for_gamma_n,t}
 & \gamma_{n,t}((y_2,\dots,y_n),B) \\
\notag & =\frac{ (2\pi(1+t))^{-\frac{n}{2}} \int_{\R} \1_B(x)\, e^{-\frac{x^2}{2(1+t)}}
\int_{\R^n}  e^{-n (V\,\circ\,m_n) (z)}  \D \mu_{\cN(\frac{(x,y_2,\dots,y_n)}{1+t}, \frac{t}{1+t} I_n)}(z) \D x
}{
(2\pi(1+t))^{-\frac{n}{2}} \int_{\R}   e^{-\frac{x^2}{2(1+t)}}
\int_{\R^n}  e^{-n (V\,\circ\,m_n) (z)}  \D \mu_{\cN(\frac{(x,y_2,\dots,y_n)}{1+t}, \frac{t}{1+t} I_n)}(z)
\D x}.
\end{align}
is the weakly continuous proper conditional probability under $\mu_{n,t}$ of the first spin given
the other spins. Using the identities
\begin{align}
\mu_{\cN\left(\frac{(x,y_2,\dots,y_n)}{1+t}, \frac{t}{1+t} I_n\right)}
& = \mu_{\cN(\frac{x}{1+t}, \frac{t}{1+t} )} \otimes
\mu_{\cN\left(\frac{(y_2,\dots,y_n)}{1+t}, \frac{t}{1+t} I_{n-1}\right)}, \\
\mu_{\cN\left(\frac{(y_2,\dots,y_n)}{1+t}, \frac{t}{1+t} I_{n-1}\right)}
\circ m_{n-1}^{-1}
& = \mu_{\cN\left(\frac{m_{n-1}(y_2,\dots,y_n)}{1+t}, \frac{t}{(n-1)(1+t)} \right)}, \\
m_n(z_1,\dots,z_n)
& = \frac{z_1}{n} + \frac{n-1}{n} m_{n-1}(z_2,\dots,z_n),
\end{align}
we obtain the  expression
\begin{align}
\label{eqn:formula_for_gamma_n2}
& \gamma_{n,t}((y_2,\dots,y_n),B) = \\
\notag
& \frac{\int_{\R} \1_B(x)
\int_{\R} \int_{\R}  e^{-nV(\frac{1}{n}s+ \frac{n-1}{n} r)}
\D \mu_{\cN \left(\frac{m_{n-1}(y_2,\dots,y_n)}{1+t}, \frac{t}{(n-1) 1+t} \right)}(r)
\D \mu_{\cN \left(\frac{x}{1+t} , \frac{t}{1+t}\right)}(s) \D \mu_{\cN(0,1+t)}(x)}
{\int_{\R}
\int_{\R} \int_{\R}  e^{-nV(\frac{1}{n}s+ \frac{n-1}{n} r)}
\D \mu_{\cN \left(\frac{m_{n-1}(y_2,\dots,y_n)}{1+t}, \frac{t}{(n-1) 1+t} \right)}(r)
\D \mu_{\cN \left(\frac{x}{1+t} , \frac{t}{1+t}\right)}(s) \D \mu_{\cN(0,1+t)}(x) }.
\end{align}
We see that $\gamma_{n,t}(u,\cdot)= \gamma_{n,t}(v,\cdot)$ for all $u,v\in \R^{n-1}$ with
$m_{n-1}(v)= m_{n-1}(u)$. Hence we can define $\overline \gamma_{n,t}\colon\, \R \times \cB(\R)
\rightarrow [0,1]$ by letting $\overline \gamma_{n,t}(\alpha, B) = \gamma_{n,t}(v,B)$ for
$\alpha \in \R$ and $B\in \cB(\R)$, where $v\in \R^{n-1}$ is such that $m_{n-1}(v) = \alpha$, i.e.,
\begin{flalign}
\label{eqn:definition_of_overline_gamma_n,t}
&& & \overline \gamma_{n,t}(\alpha, B) \\
\notag
&& & = \frac{\int_{\R} \1_B(x)
\int_{\R} \int_{\R}  e^{-nV(\frac{1}{n}s+ \frac{n-1}{n} r)}
\D \mu_{\cN \left(\frac{\alpha}{1+t}, \frac{t}{(n-1) 1+t} \right)}(r)
\D \mu_{\cN \left(\frac{x}{1+t} , \frac{t}{1+t}\right)}(s) \D \mu_{\cN(0,1+t)}(x)}
{\int_{\R}
\int_{\R} \int_{\R}  e^{-nV(\frac{1}{n}s+ \frac{n-1}{n} r)}
\D \mu_{\cN \left(\frac{\alpha}{1+t}, \frac{t}{(n-1) 1+t} \right)}(r)
\D \mu_{\cN \left(\frac{x}{1+t} , \frac{t}{1+t}\right)}(s) \D \mu_{\cN(0,1+t)}(x) }.
\end{flalign}
\end{note}

\begin{note}
\label{note:eta_nt}
We show that $\eta_{n,t}$ is indeed the weakly continuous proper regular conditional probability of the magnetisation of the $n$ spins at time $0$ given the magnetisation at time $t$. \\
Let $\mu_n$ be the law on $C([0,\infty),\R^n)$ of the paths of the independent Brownian motions
performed by the $n$ spins with initial distribution $\mu_{n,0}$,  i.e., $\mu_n$ is given by \eqref{eqn:formula_for_mu_n}.
The joint law of the process at time $0$ and time $t$ is given by
\begin{flalign}
&& \mu_{n,(0,t)}(A) &= \int_\R \int_\R \1_A(x,y) \D\, \left[p_n(t,x,\cdot)\right](y) \D \mu_{n,0}(x)
&& (A\in \cB((\R^2)^n)).
\end{flalign}
We write $m_n$ also for the function $(\R^2)^n \rightarrow \R^2$ given by
\begin{flalign}
&& m_n((x_1,y_1),\dots,(x_n,y_n)) = \frac1n \sum_{i=1}^n (x_i,y_i)
&& (x_1,y_1,\dots,x_n,y_n \in \R).
\end{flalign}
Let $\overline{\mu}_{n,(0,t)} = \mu_{n,(0,t)} \circ m_n^{-1}$. Since $p_n(t,x,\cdot) \circ m_n^{-1}
= \mu_{\cN(x,tI_n)} \circ m_n^{-1} = \mu_{\cN(m_n(x),\frac{t}{n})}$ and $\mu_{\cN(0,I_n)} \circ
m_n^{-1}= \mu_{\cN(0,\frac{1}{n})}$, we have
\begin{flalign}
\label{eqn:mu_n,0,t}
&& \overline{\mu}_{n,(0,t)}(A)
&= \int_\R \int_\R \1_A(s,\alpha) \,\D \mu_{\cN(s,\frac{t}{n})}(\alpha) e^{-nV(s)} \D \mu_{\cN(0,\frac1n)}(s)  \\
\notag
&&
&= \frac{1}{\sqrt{2\pi \frac{t}{n}}} \frac{1}{\sqrt{2\pi \frac{1}{n}}} \int_\R \int_\R \1_A(s,\alpha)\,
e^{-n [V(s) + \frac{s^2}{2} + \frac{(s-\alpha)^2}{2t} ]}   \D s \D \alpha
&& (A\in \cB(\R^2)).
\end{flalign}
From this it follows that $\eta_{n,t}$ given in \eqref{eqn:formula_for_eta_nt} is the weakly continuous
proper regular conditional probability under $\overline{\mu}_{n,(0,t)}$ of the first coordinate given the
second, i.e., the weakly continuous proper regular conditional probability of the magnetisation of the
$n$ spins at time $0$ given the magnetisation at time $t$.
\end{note}

\begin{note}
\label{note:2of_gamma_nt}
We verify \eqref{eqn:definition_g_n,t} and \eqref{eqn:overline_gamma_n,t_in_terms_of_g_nt}.  \\
An elementary computation gives that, for $\alpha,s\in \R$, $t\in (0,\infty)$ and $n\in\N$,
\begin{align}
&\int_{\R}  e^{-nV(\frac{1}{n}s+ \frac{n-1}{n} r)}
\D \mu_{\cN \left(\frac{\alpha}{1+t} , \frac{t}{(n-1) 1+t} \right)}(r) \\ \nonumber
&= \sqrt{\frac{(n-1)(1+t)}{2\pi t}} \int_{\R}  e^{-nV \left(\frac{1}{n}s+ \frac{n-1}{n} r \right)}
e^{- \left(r-\frac{\alpha}{1+t} \right)^2 \frac{(n-1)(1+t)}{2t}}  \D r \\ \nonumber
&= \sqrt{\frac{(n-1)(1+t)}{2\pi t}} e^{-(n-1)\frac{\alpha^2}{1+t}}
\int_{\R}  e^{-n\left[V(r + \frac{1}{n}(s-r))- V(r)\right]}\,
e^{-V(r)}\, e^{-(n-1)[ V(r) + \frac{r^2}{2} + \frac{(r-\alpha)^2}{2t}] }   \D r.
\end{align}
Hence, for $n\in\N$ and $t\in (0,\infty)$, we can write
\begin{flalign}
&& \overline \gamma_{n,t}(\alpha, B)=  \frac{\int_{\R} \1_B(x)
\int_{\R}
g_{n,t}(\alpha,s)
\D \mu_{\cN \left(\frac{x}{1+t} , \frac{t}{1+t} \right)}(s)
\D \mu_{\cN(0,1+t)}(x)}
{\int_{\R} \int_\R
g_{n,t}(\alpha,s)
 \D \mu_{\cN \left(\frac{x}{1+t} , \frac{t}{1+t}\right)}(s) \D \mu_{\cN(0,1+t)}(x) }
&& (\alpha \in \R, B \in \cB(\R)),
\end{flalign}
where
$g_{n,t}\colon\,\R^2 \rightarrow \R$ is as in \eqref{eqn:definition_g_n,t}. With Fubini's
Theorem we have
\begin{flalign}
\label{eqn:rewritting_numerator_of_overline_gamma_nt}
&& & \int_{\R} \1_B(x)
\int_{\R}
g_{n,t}(\alpha,s)
\D \mu_{\cN \left(\frac{x}{1+t} , \frac{t}{1+t}\right)}(s)
\D \mu_{\cN(0,1+t)}(x) &&\\
\notag
&& &=
\frac{1}{2\pi \sqrt{t}} \int_{\R} \int_{\R} \1_B(x)\, g_{n,t}(\alpha,s)
e^{-\left(s-\frac{x}{1+t} \right)^2\frac{1+t}{2t}}\, e^{-x^2\frac{1}{2(1+t)}} \D x \D s &&\\
\notag
&& &=
\frac{1}{2\pi \sqrt{t}}
\int_{\R} \left( \int_{\R} \1_B(x)\, e^{2xs\frac{1}{2t}}\, e^{-x^2\frac{1}{2t}} \D x \right)\,
g_{n,t}(\alpha,s)\, e^{-s^2 \frac{1+t}{2t}}
\D s && \\
\notag
&& &=
\frac{1}{\sqrt{2\pi}} \int_{\R} \mu_{\cN(s,t)}(B)\, e^{s^2\frac{1}{2t}}\,
g_{n,t}(\alpha,s)\, e^{-s^2 \frac{1+t}{2t}} \D s
&& \\
\notag
&& &=
\int_{\R} \mu_{\cN(s,t)}(B)\, g_{n,t}(\alpha,s) \D \mu_{\cN(0,1)}(s)
&& (B\in \cB(\R)).
\end{flalign}
With this we obtain \eqref{eqn:overline_gamma_n,t_in_terms_of_g_nt}.
\end{note}

\begin{note}
\label{note:g_n,t_rewritten_for_bound}
Let $n\in \N_{\ge 2}$ and $t\in (0,\infty)$. Note that, with \eqref{eqn:I_talpha} and \eqref{eqn:formula_for_eta_nt2},
$g_{n,t}$ is given by
\begin{flalign}
\label{eqn:definition_g_n,t*}
&& g_{n,t}(\alpha,s)
& = \frac{ \int_\R e^{-n[ V(r+ \frac1n (s-r))]}\,
e^{-(n-1) \left(r-\frac{\alpha}{1+t}  \right)^2 \frac{1+t}{2t} } \D r }{
\int_\R e^{-nV(r)} e^{-(n-1) \left(r-\frac{\alpha}{1+t}  \right)^2 \frac{1+t}{2t} } \D r }
&& (\alpha,s\in \R).
\end{flalign}
The numerator equals
\begin{align}
\int_\R e^{-n\left[ V(r+\frac{1}{n} (s-r)) \right]} \,
e^{-(n-1)\left( r- \frac{\alpha}{1+t}\right)^2 \frac{1+t}{2t}} \D r
= \frac{n}{n-1} \int_\R e^{-n V(z) }\, e^{-(n-1)\left( \frac{n}{n-1}z - \frac{1}{n-1}s
- \frac{\alpha}{1+t}\right)^2 \frac{1+t}{2t}} \D z.
\end{align}
Via the identities
\begin{align}
& -(n-1)\left( \frac{n}{n-1}z - \frac{1}{n-1}s - \frac{\alpha}{1+t}\right)^2  \\ \nonumber
& = -(n-1) \left( z + \frac{1}{n-1}(z-s) - \frac{\alpha}{1+t}\right)^2  \\ \nonumber
& = -(n-1) \left( z- \frac{\alpha}{1+t}\right)^2
- 2 \left( z- \frac{\alpha}{1+t} \right) (z-s) - \frac{1}{n-1} (z-s)^2   \\ \nonumber
& = -(n-1) \left( z- \frac{\alpha}{1+t}\right)^2
- 2 z^ 2 + 2\left(s+\frac{\alpha}{1+t}\right) z - 2 \frac{\alpha}{1+t}s - \frac{1}{n-1} (z-s)^2,
\end{align}
we get
\begin{flalign}
&& & g_{n,t}(\alpha,s)  = \\ \nonumber
&& & \frac{n}{n-1}\, e^{-\frac{\alpha}{t}s}\,
\frac{
\int_\R e^{\left[-2z^2+2(s+\frac{\alpha}{1+t})z-\frac{1}{n-1}(z-s)^2\right]\frac{1+t}{2t}}\,
e^{-n V(z)}\,e^{-(n-1)(z- \frac{\alpha}{1+t})^2 \frac{1+t}{2t}}\D z
}
{
\int_\R e^{-nV(r)}\, e^{-(n-1)\left(r-\frac{\alpha}{1+t}\right)^2 \frac{1+t}{2t} } \D r
}
&& (\alpha,s\in \R).
\end{flalign}
\end{note}

\begin{note}
We give the proof of Theorem \ref{theorem:rate_function_has_multiple_global_minima_expressed_in_second_differential_quotient}, namely we prove (a), i.e., the existence of $\rho_n$ mentioned in 
Theorem \ref{theorem:rate_path_conditional_seq_and_comparing_with_two_layer} and prove (b), i.e., that 
for $\alpha \in\R$ the large deviation principle holds for $(\rho_n(\alpha,\cdot))_{n\in\N}$ with rate $n$
and rate function given in \eqref{eqn:rate_function_paths}. \\
$\bullet$ \emph{Proof of (a), existence of $\rho_n$}. \\
Let $\fJ=\{(t_0,t_1,\cdots,t_k,t)\colon\, k\in \N_0, 0=t_0<t_1<\cdots<t_k<t\}$ and let $j\in \fJ$ be given by $j=(t_0,t_1,\dots,t_k,t)$. 
Define $\pi_{j}\colon\, C([0,t],\R) 
\rightarrow \R^{k+2}$  by
\begin{flalign}
&& \pi_{j}(\phi) = \big(\phi(t_0), \phi(t_1),\dots,\phi(t_k),\phi(t)\big)
&& \big(\phi\in C([0,t],\R)\big).
\end{flalign}
Similarly as in item \ref{note:eta_nt},  
$\overline \mu_{n,j} := \mu_n \circ \pi_{[0,t]}^{-1} \circ \pi_j^{-1} \circ m_n^{-1}
= (\mu_n \circ \pi_{[0,t]}^{-1} \circ m_n^{-1}) \circ \pi_j^{-1} $ is given by
\begin{flalign}
 \overline \mu_{n,j}(A) = \hspace{6cm} \\
\notag
\int_{\R^{k+2}} \1_A(s_0,s_1,\dots,s_k,s_{k+1})
\sqrt{\frac{n}{2\pi (t-t_k)}}\,e^{-n \frac{(s_{k+1}- s_k)^2}{2(t-t_{k})}}
\prod_{i=1}^k \left[ \sqrt{\frac{n}{{2\pi (t_i - t_{i-1} )}}} \,
e^{-n \frac{(s_{i}- s_{i-1})^2}{2(t_{i}-t_{i-1})}} \right] \hspace{-5cm}  \\ \notag
  \qquad \times \frac{1}{Z_n} e^{-n \left[ V(s_0)
+ \frac{s_0^2}{2}\right]} \D s_{k+1} \D s_k \cdots \D s_1 \D s_0
&& (A\in \cB(\R^{k+2})).
\end{flalign}
Then $\rho_{n,t,j}\colon\, \R \times \cB(\R^{k+1}) \rightarrow [0,1]$ defined by
\begin{flalign}
&& & \rho_{n,t,j}(\alpha, A) = \\
&& &
\notag
\frac{
\int_{\R^{k+1}} \1_A(s_0,s_1,\dots,s_k)
e^{-n \frac{(\alpha- s_k)^2}{2(t-t_{k})}} \prod_{i=1}^k
\left[  e^{-n \frac{(s_{i}- s_{i-1})^2}{2(t_{i}-t_{i-1})}} \right]
e^{-n \left[ V(s_0) + \frac{s_0^2}{2}\right]} \D s_k \cdots \D s_1 \D s_0
}{
\int_{\R^{k+1}}
e^{-n \frac{(\alpha- s_k)^2}{2(t-t_{k})}} \prod_{i=1}^k
\left[  e^{-n \frac{(s_{i}- s_{i-1})^2}{2(t_{i}-t_{i-1})}} \right]
e^{-n \left[ V(s_0) + \frac{s_0^2}{2}\right]}  \D s_k \cdots \D s_1 \D s_0
} \hspace{-3cm} \\
\notag
&& &
&& (A\in \cB(\R^{k+1})),
\end{flalign}
is the weakly continuous proper regular conditional probability under $\overline \mu_{n,j}$ given the
coordinate at time $t$. 
By Kolmogorov's Theorem (e.g.\ Bogachev~\cite[Theorem 7.7.2]{Bo07}), there exists a measure
$\rho_{n,t}(\alpha,\cdot)$ on $C([0,t),\R)$ (see e.g.\ \cite[Theorem 7.7.4]{Bo07}, it is similar to the fact that the Brownian motion is a process on $C([0,t],\R)$, which is stated below \cite[Theorem 7.7.4]{Bo07}) for which $\rho_{n,t}(\alpha,\cdot)
\circ \pi_j^{-1} = \rho_{n,t,j}(\alpha,\cdot)$ for all $j\in \fJ$. Because $\alpha\mapsto \rho_{n,t,j}
(\alpha,\cdot)$ is strongly continuous for all $n\in\N$ and $j\in \fJ$ (see Appendix~\ref{appendix2}),
the map $\alpha \mapsto \rho_{n,t}(\alpha,\cdot)$ is (strongly and hence) weakly continuous, i.e.,
$\rho_{n,t}$ is the weakly continuous proper regular conditional probability of $\mu_n \circ \pi_{[0,t]}^{-1} \circ m_n^{-1}$ under
$\pi_t$. \\
$\bullet$ \emph{Proof of (b), large deviation principle}. \\
Let $j\in \fJ$ be given by $j= (t_0,t_1,\dots,t_k)$ and let $\alpha \in \R$. 
By den Hollander~\cite[Theorem III.17]{dH00}, the sequence $(\rho_{n,t,j}(\alpha,\cdot))_{n\in\N}$ satisfies
the large deviation principle with rate $n$ and rate function 
$I_j\colon\, \R^{k+1} \rightarrow [0,\infty]$
given by 
\begin{align}
\label{eqn:rate_function_finite_dimensionals}
(s_0,s_1,\dots,s_k) \mapsto
V(s_0) + \frac{s_0^2}{2} + \left[\sum_{i=1}^k
 \frac{(s_{i}- s_{i-1})^2}{2(t_{i}-t_{i-1})} \right] + \frac{(\alpha- s_k)^2}{2(t-t_{k})} - \fC_j,
\end{align}
where $\fC_j$ is such that \eqref{eqn:rate_function_finite_dimensionals} has infimum $0$, i.e., 
\begin{align}
\fC_j=\inf_{s_0,s_1,\dots,s_k\in \R} \left( V(s_0) + \frac{s_0^2}{2} + \left[\sum_{i=1}^k
 \frac{(s_{i}- s_{i-1})^2}{2(t_{i}-t_{i-1})} \right] + \frac{(\alpha- s_k)^2}{2(t-t_{k})} \right).
\end{align}
We will show that $I_j$ is a good rate function, i.e., $I_j$ has compact level sets. Let $c>0$. Let $K_0 = \{s_0 \in \R: V(s_0) + \frac{s_0^2}{2} \le c\}$, $K_i= \{s_i\in \R: \sup_{s_{i-1}\in K_{i-1}}  \frac{(s_{i}- s_{i-1})^2}{2(t_{i}-t_{i-1})}\le c\}$ for $i\in \{1,\dots,k\}$ and $K_{k}^*= \{s_k \in \R: \frac{(\alpha- s_k)^2}{2(t-t_{k})} \le c\}$. All these sets are compact and therefore also the set $\{(s_0,s_1,\dots,s_k)\in \R^{k+1}: s_i\in K_i \mbox{ for } i\in \{0,\dots,k-1\}, s_k \in K_k \cap K_k^*\}$. Since the level sets of $I_j$ are closed, we conclude by this that $I_j$ has compact level sets. \\
We will show that the constant $\fC_j$, does not depend on $j$, by showing 
\vspace{-2mm}
\begin{align}
\fC_j = \fC_{(0,t)} = \inf_{s_0 \in \R} V(s_0) + \frac{s_0^2}{2} 
 + \frac{(\alpha- s_0)^2}{2t} = C_{t,\alpha}.
\end{align}
First, note that $\frac{(a+b)^2}{c+d} \le \frac{a^2}{c} + \frac{b^2}{d}$ for $a,b,c,d\in \R$ with $c,d> 0$, since $(da-cb)^2\ge 0$. By this we conclude that $ [\sum_{i=1}^k
 \frac{(s_{i}- s_{i-1})^2}{2(t_{i}-t_{i-1})} ] + \frac{(\alpha- s_k)^2}{2(t-t_{k})} \ge \frac{(\alpha- s_0)^2}{2t}$ for all $s_0,s_1,\dots,s_k\in \R$ and thus that $\fC_j \ge \fC_{(0,t)}$. By letting $s_i = \psi(t_i)$ for $i\in \{1,\dots,k\}$, where $\psi(s) = s_0 + \frac{\alpha-s_0}{t} s$, we get $\left[\sum_{i=1}^k
 \frac{(s_{i}- s_{i-1})^2}{2(t_{i}-t_{i-1})} \right] + \frac{(\alpha- s_k)^2}{2(t-t_{k})} = \frac{(\alpha- s_0)^2}{2t}$. Hence we conclude $\fC_j= \fC_{(0,t)}$. \\
By the Dawson-G\"artner projective limit theorem
\cite[Theorem 4.6.1]{DeZe10} the sequence $(\rho_{n,t}(\alpha,\cdot))_{n\in\N}$
satisfies the large deviation principle on $\R^{[0,t)}$, equipped with the product topology (see the beginning of the proof \cite[Theorem 5.1.6]{DeZe10} why one can replace the projective limit by this product space) with rate $n$ and rate function $\R^{[0,t)}\rightarrow [0,\infty]$
given by $\phi \mapsto \sup_{j\in \fJ} I_j( \pi_j(\phi))$, i.e., 
\begin{align}
\label{eqn:sup_rate_function_paths}
\phi \mapsto \ &V(\phi(0)) + \frac{\phi(0)^2}{2} -C_{t,\alpha} + \\
\notag & \sup \Bigg \{ \left[\sum_{i=1}^k
 \frac{(\phi(t_{i})- \phi(t_{i-1}) )^2}{2(t_{i}-t_{i-1})} \right] + \frac{(\alpha- \phi(t_k))^2}{2(t-t_{k})} 
  :
 k\in \N, \ 0 <t_1< \cdots <t_k <t \Bigg \}.
\end{align}
Note that if $\phi \in \cA\cC([0,t),\R)$ and $\phi(s)$ does 
not converge to $\alpha$ as $s\uparrow t$, then $\sup_{j\in \fJ} I_j(\phi)=\infty$, 
since $\sup_{s \in (0,t)} \frac{(\alpha-\phi(s))^2}{2(t-s)} = \infty$. 
Furthermore, if $\phi \in \cA\cC([0,t),\R)$ and $\lim_{s\uparrow t} \phi(s) = \alpha$, then the function $\overline \phi : [0,t] \rightarrow \R$ given by $\overline \phi = \phi $ on $[0,t)$ and $\overline \phi(t) =\alpha$ is an element of $\cA \cC ([0,t],\R)$ and the supremum on the second line in \eqref{eqn:sup_rate_function_paths} is equal to 
\begin{align}
\sup \Bigg \{ \sum_{i=1}^{k+1}
 \frac{(\phi(t_{i})- \phi(t_{i-1}) )^2}{2(t_{i}-t_{i-1})}  
  :
 k\in \N, \ 0 <t_1< \cdots <t_k <t_{k+1}=t \Bigg \},
\end{align}
In \cite[Proof of Lemma 5.1.6]{DeZe10} (with $\Lambda^*(x)$ replaced by $x^2$) it is shown that this supremum is equal to $\frac12 \int_0^t \dot \phi^2(s) \D s$. Furthermore, in \cite[Proof of Lemma 5.1.6]{DeZe10} (last part) it is also shown that \eqref{eqn:sup_rate_function_paths} equals $\infty$ when $\phi\notin \cA\cC([0,t),\R)$. 
Hence $(\rho_{n,t}(\alpha,\cdot))_{n\in\N}$ satisfies the large deviation principle on $\R^{[0,t)}$ with rate $n$ and rate function $\R^{[0,t)} \rightarrow [0,\infty]$ given by \eqref{eqn:rate_function_paths}. 
This leaves us to prove that the large deviation principle also holds on $C([0,t),\R)$ equipped with the topology of uniform convergence. 
To prove this, by \cite[Theorem 4.1.5(b) and Theorem 4.2.6]{DeZe10}, it is sufficient to show that $(\rho_{n,t}(\alpha,\cdot))_{n\in\N}$ is exponentially tight in $C([0,t),\R)$ equipped with the topology of uniform convergence. 
The exponential tightness follows in turn by showing that the large deviation rate function in \eqref{eqn:rate_function_paths}, which we call $J$ here, has compact level sets in the uniform topology of $C([0,t),\R)$, i.e., for $b>0$ the set 
\begin{align}
K_b = \{ \phi\in \cA\cC([0,t),\R) \ : \ \lim_{s\uparrow t} \phi(s)=\alpha, 
\ J(\phi)  \le b\}
\end{align}
is compact. 
We prove this by using the Arzel\`a-Ascoli theorem (see e.g. \cite[Theorem C.8]{DeZe10}). Since $K_b$ is closed it is sufficient to show that $K_b$ is bounded and equicontinuous (actually the Arzel\`a-Ascoli theorem can not directly be used since $[0,t)$ is not compact, however proving that $\tilde K_b = \{\phi \in \cA\cC([0,t],\R) \ : \ \phi(t)=\alpha, \tilde J(\phi)\}$ is bounded and equicontinuous in $C([0,t],\R)$ suffices, where $\tilde J$ is the canonical extension of $J$ to $\R^{[0,t]}$. The proof is similar as showing that $K_b$ is bounded and equicontinuous). \\
\emph{Equicontinuity of $K_b$}. For $\phi \in K_b$ and $u,v\in [0,t)$ with $u<v$  (applying Jensen's inequality)
\begin{align}
\left(\frac{\phi(u)-\phi(v)}{u-v}\right)^2 \le \frac{1}{(u-v)} \int_u^v \dot \phi(s)^2 \D s \le  \frac{2b+C_{t,\alpha} }{(u-v)}.
\end{align}
and since $2m |x| \le x^2 +m^2 $ for all $m>0$ we have 
\begin{align}
|\phi(u)- \phi(v) | \le \frac{2b+C_{t,\alpha} }{2m} |u-v| + \frac{m}{2}.
\end{align}
This implies equicontinuity. \\
\emph{Boundedness of $K_b$}. 
Let $\psi_1,\dots,\psi_k$ be the global minimisers of the (lower semicontinuous) rate function $J$. By the proof of (c) we know that $\psi_i$ is the linear function that connects $(0,\psi_i(0))$ and $(t,\alpha)$, i.e., $\psi_i(s) = \psi_i(0) + \frac{\alpha - \psi_i(0)}{t} s$ for $s\in [0,t)$. 
Let $m>0$ be such that $|\psi_i(0)|\le m$ for all $i\in \{1,\dots,k\}$ and $|\alpha|\le m$ and such that $V(s_0) + \frac{s_0^2}{2}  + \frac{(\alpha- s_0)^2}{2t} \ge C_{t,\alpha} +b$  for $s_0 \in [-m,m]^c$. 
Suppose that $\phi\in \cA\cC([0,t),\R)$ with $\lim_{s\uparrow t} \phi(s) = \alpha$ and $\|\phi\|_\infty \ge m+ b+1$ (where $\|\cdot\|_\infty$ is the supremum norm). 
Let $u\in (0,t)$ be such that $|\phi(u)| \ge m+b+1$. 
Then the optimal path $\psi$ from $0$ to $t$ which agrees with $\phi$ in $0$, in $u$ and in $t$ (i.e., $\lim_{s\uparrow t} \psi(s) = \alpha$)  is the linear interpolation between the points $(0,\phi(0)), (u,\phi(u)), (t,\alpha)$ (see the proof of (c)), i.e.,
\begin{align}
\psi(s) = \begin{cases}
\phi(0)+ \frac{\phi(u)-\phi(0)}{u} s & \quad s\in [0,u], \\
\phi(u)+ \frac{\alpha - \phi(u)}{t-u}(s-u) & \quad s\in [u,t].
\end{cases}
\end{align}
So then we have 
\begin{align}
J(\phi) \ge J(\psi) \ge I_{(0,u,t)}(\psi) = V(\phi(0)) + \frac{\phi(0)^2}{2} - C_{t,\alpha} +  \frac{(\phi(u)-\phi(0))^2}{2u} + \frac{(\alpha-\phi(u))^2}{2(t-u)} \ge b,
\end{align}
since either $\phi(0) \in [-m,m]$ and thus $|\phi(0)- \phi(u)|^2 \ge (b+1)^2\ge b$ or $\phi(0) \in [-m,m]^c$ and thus $V(s_0) + \frac{s_0^2}{2}  + \frac{(\alpha- s_0)^2}{2t} \ge C_{t,\alpha} +b$. By this we conclude that the set $K_b$ bounded in $\|\cdot\|_\infty$-norm by $m+b+1$. 
\end{note}


\section{Proper weakly continuous regular conditional probabilities}
\label{appendix2}

\begin{definition}
\label{def:proper_weakly_cont_reg_cond_prob}
Let $\cX$ and $\cY$ be topological spaces with Borel sigma-algebras $\cB(\cX)$ and $\cB(\cY)$.
Equip $\cX \times \cY$ with the product topology. Then $\cB(\cX \times \cY) = \cB(\cX) \otimes \cB(\cY)$ 
(i.e., the smallest sigma-algebra containing all sets $A\times B$ with $A\in \cB(\cX)$ and $B\in \cB(\cY)$). 
Let $\mu$ be a probability measure on $\cB(\cX\times \cY)$ and let $\pi\colon\, \cX \times \cY \rightarrow 
\cY$ the canonical projection. Then $\gamma\colon\, \cY \times \cB(\cX) \rightarrow [0,1]$ is called a 
\emph{regular conditional probability} under $\mu$ of the first coordinate given the second, when 
$\gamma$ is a transition kernel and
\begin{flalign}
&&
\mu( A \times B) =\int \1_B(y) \gamma(y, A) \D \, [\mu\circ \pi^{-1}](y)
&& \big(A\in \cB(\cX), B\in \cB(\cY)\big).
\end{flalign}
$\gamma$ is called \emph{proper} when $\gamma(y,\cdot) =0$ for all $y\in \supp(\mu\circ \pi^{-1})^c$, where
\begin{align}
\supp(\nu) = \cY \setminus \bigcup\big\{ U\subset \cY\colon\, U \mbox{ is open and } \nu(U) =0\big\}
\end{align}
for measures $\nu$ on $\cB(\cY)$. $\gamma$ is called \emph{weakly continuous} when the map 
$\alpha \rightarrow \gamma(\alpha,\cdot)$ is weakly continuous.
\end{definition}

\begin{lemma}
With the notation as in Definition {\rm \ref{def:proper_weakly_cont_reg_cond_prob}}, if $\gamma_1,
\gamma_2\colon\, \cY \times \cB(\cX) \rightarrow [0,1]$ are two proper regular conditional probabilities under $\mu$ of the first coordinate given the second,
then  $\gamma_1(y,\cdot)= \gamma_2(y,\cdot)$ for $\mu\circ \pi^{-1}$-a.e.\ $y\in Y$. Consequently, if
there exists a weakly continuous proper regular conditional probability of $\mu$ under $\pi$, then it is
unique.
\end{lemma}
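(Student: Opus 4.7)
The plan is to split the claim into two steps: first show that any two proper regular conditional probabilities agree $\mu\circ \pi^{-1}$-almost everywhere, and then upgrade this to pointwise equality using weak continuity and properness.

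For the almost-everywhere part, I would exploit a countable generating $\pi$-system for $\cB(\cX)$ (available because the spaces appearing in the paper, such as $\R^n$ and $C([0,t],\R)$, are second countable). Fix such a family $\cA = \{A_k\}_{k\in\N}$. For each fixed $A\in \cA$ and each $B\in \cB(\cY)$, the defining identity in Definition~\ref{def:proper_weakly_cont_reg_cond_prob} gives
\begin{flalign}
&& \int_B \gamma_1(y,A)\D[\mu\circ\pi^{-1}](y) = \mu(A\times B) = \int_B \gamma_2(y,A)\D[\mu\circ\pi^{-1}](y), &&
\end{flalign}
so $\gamma_1(\cdot,A)$ and $\gamma_2(\cdot,A)$ are two versions of the same Radon--Nikodym derivative, and hence coincide off a $\mu\circ\pi^{-1}$-null set $N_A$. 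Setting $N = \bigcup_{A\in\cA} N_A$ yields a null set outside of which $\gamma_1(y,\cdot)$ and $\gamma_2(y,\cdot)$ are probability measures that agree on the $\pi$-system $\cA$. Dynkin's $\pi$-$\lambda$ theorem then gives $\gamma_1(y,\cdot)=\gamma_2(y,\cdot)$ for all $y\in N^c$, proving the first assertion.

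For the uniqueness statement, assume that $\gamma_1$ and $\gamma_2$ are in addition weakly continuous. The set $E=\{y\in\cY\colon \gamma_1(y,\cdot)=\gamma_2(y,\cdot)\}$ has full $\mu\circ\pi^{-1}$-measure by the previous step. I claim that $E\cap \supp(\mu\circ\pi^{-1})$ is dense in $\supp(\mu\circ\pi^{-1})$: if $y_0\in \supp(\mu\circ\pi^{-1})$ and $U$ is any open neighborhood of $y_0$, then $(\mu\circ\pi^{-1})(U)>0$, so $U$ cannot be contained in $E^c$, and thus $U\cap E\ne \emptyset$. Weak continuity of $y\mapsto \gamma_i(y,\cdot)$ implies that the coincidence set $E$ is closed in $\cY$ (two weakly continuous maps into the space of probability measures on $\cX$ agree on a closed set). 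Therefore $E\supset \supp(\mu\circ\pi^{-1})$, i.e., $\gamma_1(y,\cdot)=\gamma_2(y,\cdot)$ for every $y$ in the support. Outside the support, properness forces both kernels to be the zero measure, so they coincide there as well.

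The step I expect to be the main (if modest) obstacle is the topological one in the second paragraph: verifying that the coincidence set of two weakly continuous kernels is closed, and carefully handling the distinction between ``agreement $\mu\circ\pi^{-1}$-a.e.'' and ``agreement on the support''. For the former, I would note that weak convergence is metrizable on probability measures over a Polish space (e.g.\ by the L\'evy--Prokhorov metric), so closedness of $E$ follows from the continuity of the map $y\mapsto (\gamma_1(y,\cdot),\gamma_2(y,\cdot))$ and closedness of the diagonal; for the latter, the density argument above together with properness closes the gap. No macros beyond those already declared in the paper are used.
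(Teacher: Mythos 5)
Your proof is correct and follows essentially the same route as the paper: the second part (full-measure coincidence set is dense in $\supp(\mu\circ\pi^{-1})$, hence equals it by weak continuity, with properness handling the complement of the support) is exactly the paper's argument. For the first part the paper simply cites Bogachev, whereas you spell out the standard Radon--Nikodym/Dynkin argument over a countable generating $\pi$-system; your caveat that this needs the Borel $\sigma$-algebra of $\cX$ to be countably generated (true for the second-countable spaces actually used in the paper) is appropriate.
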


\begin{proof}
The first statement can be found in Bogachev \cite[Section 10.4]{Bo07}.
The second statement follows from the fact that if $\gamma_1$ and $\gamma_2$ are proper regular
conditional probabilities, then $\mu(B)=1$ for $B=\{y\in \supp(\mu)\colon\, \gamma_1(y,\cdot)
=\gamma_2(y,\cdot)\}$, and hence  $B$ is dense in $\supp(\mu)$. So if $\gamma_1$ and $\gamma_2$
are weakly continuous, then $B=\supp(\mu)$, i.e., $\gamma_1=\gamma_2$.
\end{proof}

We will use the following lemma to conclude that regular conditional probabilities with a continuous
bounded density are weakly continuous. This lemma is an easy consequence of Lebesgue's Dominated
Convergence Theorem.

\begin{lemma}
\label{lemma:mixing_with_function_gives_weakly_continuous_kernel}
Let $\cX$ and $\cY$ be topological spaces with Borel sigma-algebras  $\cB(\cX)$ and $\cB(\cY)$.
Let $\mu$ be a probability measure on $\cB(\cX)$. Let $f \in C_b(\cX\times \cY,\R)$. If $\gamma\colon\,
\cY \times \cB(\cX) \rightarrow [0,1]$ is given by
\begin{flalign}
&& \gamma(y,A) = \frac{\int \1_A(x) f(y,x) \D \mu(x)}{\int  f(y,x) \D \mu(x)}
&& \big(y\in \cY, A\in \cB(\cX)\big),
\end{flalign}
then $\gamma$ is weakly continuous (even strongly continuous, i.e., $y \mapsto \gamma(y,A)$
is continuous for all $A\in \cB(\cX)$).
\end{lemma}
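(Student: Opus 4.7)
The plan is to apply Lebesgue's Dominated Convergence Theorem separately to the numerator and denominator of the defining formula for $\gamma$, along an arbitrary convergent sequence in $\cY$. I would aim directly at strong continuity, i.e., the continuity of $y\mapsto\gamma(y,A)$ for each fixed Borel set $A\in\cB(\cX)$, and then observe that the same reasoning, applied with $\1_A$ replaced by $g\in C_b(\cX,\R)$, delivers weak continuity without additional effort.

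First I would fix $y\in\cY$, a sequence $(y_n)_{n\in\N}$ in $\cY$ with $y_n\to y$, and an arbitrary $A\in\cB(\cX)$. Continuity of $f$ on the product space $\cX\times\cY$ (with the product topology) gives $f(y_n,x)\to f(y,x)$ for every $x\in\cX$, and hence $\1_A(x)f(y_n,x)\to \1_A(x)f(y,x)$ pointwise in $x$. Because $f$ is bounded, the constant $\|f\|_\infty$ provides a $\mu$-integrable dominating function (constants are integrable since $\mu$ is a probability measure). Lebesgue's Dominated Convergence Theorem therefore yields
\begin{equation*}
\int \1_A(x)\,f(y_n,x)\D\mu(x)\ \longrightarrow\ \int \1_A(x)\,f(y,x)\D\mu(x).
\end{equation*}
Applying the same reasoning with $\1_A$ replaced by the constant function $1$ gives convergence of the denominators, and dividing (using that $\int f(y,x)\D\mu(x)\neq 0$, which must be implicit in the hypotheses in order for $\gamma(y,\cdot)$ to be a well-defined probability measure) produces $\gamma(y_n,A)\to\gamma(y,A)$. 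This is strong continuity. Repeating the argument with an arbitrary $g\in C_b(\cX,\R)$ in place of $\1_A$ yields continuity of $y\mapsto\int g(x)\D[\gamma(y,\cdot)](x)$, which is exactly weak continuity.

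There is essentially no obstacle: as the excerpt itself notes, the lemma is a direct consequence of DCT. The only subtle point is the one just mentioned, namely that one needs $\int f(y,x)\D\mu(x)>0$ for every $y\in\cY$ (satisfied, for instance, if $f$ is strictly positive on $\cX\times\cY$) so that the ratio defining $\gamma(y,\cdot)$ makes sense everywhere. In the applications in the paper, $\cY$ is always Euclidean, so sequential continuity coincides with continuity; in a general (not first-countable) topological space one would have to replace sequences by nets, but this does not affect the essential argument, the uniform bound $\|f\|_\infty$ still dominating the convergent net via DCT applied to the measure $|f(\cdot,x)|\D\mu(x)$-a.s.\ in each coordinate.
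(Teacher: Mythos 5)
Your proof is correct and is essentially the paper's own argument: the paper gives no details beyond remarking that the lemma ``is an easy consequence of Lebesgue's Dominated Convergence Theorem,'' which is exactly the numerator/denominator DCT argument you carry out, and you rightly flag the implicit hypothesis $\int f(y,x)\D\mu(x)>0$. The only shaky point is your closing remark that DCT still applies to nets---it does not in general (pointwise convergence of a uniformly bounded net need not give convergence of integrals)---but this is immaterial here since every $\cY$ to which the lemma is applied in the paper is metrizable, so sequential continuity suffices.
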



\end{document}